\documentclass[12pt,reqno,twoside]{amsart}
\usepackage[T1]{fontenc}
\usepackage{a4wide}
\usepackage{lmodern}
\usepackage{microtype}
\usepackage{amsmath,amssymb,amsthm,mathtools}
\usepackage{adjustbox}
\usepackage{mathrsfs}
\usepackage{enumitem}
\usepackage{booktabs}
\usepackage{bm}
\usepackage{array}
\usepackage{relsize}
\usepackage{tikz-cd}
\usepackage{xcolor}
\usepackage{hyperref}
\usepackage[nameinlink,capitalise,noabbrev]{cleveref}
\hypersetup{colorlinks,linkcolor=teal,citecolor=teal}

\numberwithin{equation}{section}
\newtheorem{theorem}{Theorem}[section]
\newtheorem{proposition}[theorem]{Proposition}
\newtheorem{lemma}[theorem]{Lemma}
\newtheorem{corollary}[theorem]{Corollary}

\newtheorem{example}{Example}[section]
\theoremstyle{definition}
\newtheorem{definition}[theorem]{Definition}

\newtheorem{remark}{Remark}[section]

\DeclareMathOperator{\Herm}{Herm}
\DeclareMathOperator{\Vol}{Vol}

\DeclareMathOperator{\coker}{coker}

\DeclareMathOperator{\rank}{rank}
\DeclareMathOperator{\tr}{tr}

\newcommand{\C}{\mathbb C}
\newcommand{\R}{\mathbb R}

\newcommand{\ddbar}{\bar\partial}
\newcommand{\ii}{\sqrt{-1}}
\newcommand{\Id}{\operatorname{Id}}
\newcommand{\HN}{\mathrm{HN}}

\newcommand{\norm}[1]{\lVert #1\rVert}
\newcommand{\abs}[1]{\lvert #1\rvert}

\title[Surjectivity of real-linear Cauchy--Riemann operators]{Surjectivity of real-linear Cauchy--Riemann operators: from the minimal Harder--Narasimhan slope to automatic transversality}
\author{Qingchun Ji}
\address{School of Mathematical Sciences, Fudan University, Shanghai 200433, China}
\email{qingchunji@fudan.edu.cn}
\author{Jun Yao}
\address{School of Mathematical Sciences, University of Electronic Science and Technology of China, Chengdu 611731, China}
\email{junyao@uestc.edu.cn}
\thanks{This work was partially supported by the National Natural Science Foundation of China, Nos. 12431005, 12571086, and 12601141.}
\subjclass[2020]{Primary 32Q65, 14H60; Secondary 58J05, 32W05}
\keywords{real-linear Cauchy--Riemann operators, automatic transversality, Harder--Narasimhan filtrations, pseudoholomorphic curves}
\date{}

\begin{document}

\begin{abstract}
This paper relates the minimal Harder--Narasimhan slope to the surjectivity of real-linear Cauchy--Riemann operators. We establish a conformally invariant $L^2$ criterion and an asymptotic slope criterion, which yield higher-rank automatic transversality criteria for pseudoholomorphic curves beyond the classical rank-one framework. Applications to pseudoholomorphic spheres in $S^6$ provide quantitative $L^2$ obstructions to the integrability of almost complex structures.
\end{abstract}

\maketitle

\section{Introduction}
Transversality is a fundamental issue in the theory of pseudoholomorphic curves. If
\begin{align*}
u:(M,j)\longrightarrow (X,J)
\end{align*}
is a pseudoholomorphic curve, then the linearization of the nonlinear Cauchy--Riemann operator is a real-linear Cauchy--Riemann operator
\begin{align*}
D_u:W^{1,p}(M,u^*TX)\longrightarrow
L^p(M,\Lambda^{0,1}T^*M\otimes_{\C}u^*TX).
\end{align*}
If $D_u$ is surjective, then $u$ is Fredholm regular. For an immersed curve, the corresponding normal operator governs unparametrized normal deformations. Surjectivity obtained without a generic perturbation of $J$ is commonly referred to as \emph{automatic transversality} (see \cite{MS94,MS12,WendlAT}). For a fixed nonintegrable almost complex structure, however, general methods for establishing transversality in a prescribed homology class are limited (see McDuff--Salamon \cite[p. 40]{MS12}).

Two different approaches to the automatic transversality of closed pseudoholomorphic curves are particularly relevant to the present work. For a real $4$-dimensional manifold $X$, the normal bundle of an immersed pseudoholomorphic curve is a holomorphic line bundle. A zero-counting argument based on the similarity principle (see \cite[Theorem $1'$]{HLS}) implies that every real-linear Cauchy--Riemann operator on a holomorphic line bundle $L\to M$ is surjective whenever
\begin{align*}
	\deg L>-\chi(M),
\end{align*}
where $\chi(M)$ is the Euler characteristic of $M$. This gives a purely topological automatic transversality criterion depending only on the degree of the normal bundle and the genus of $M$. The rank-one nature of this argument is essential: in higher rank, surjectivity is sensitive to the holomorphic structure of the bundle and cannot in general be inferred from the genus and total degree alone.

A different approach to transversality is based on $L^2$ estimates. If $M$ is closed, the eigenvalue estimate of Ji--Zhu \cite[Corollary 1.3]{JiZhu} bounds the smallest eigenvalue of $\ddbar_E\ddbar_E^*$ in terms of the integral of the smallest eigenvalue of the contracted Chern curvature of $E$, and yields an automatic transversality criterion for holomorphic curves in K\"ahler manifolds of real dimension $\geq4$ \cite[Corollary 1.5]{JiZhu}. This estimate is the analytic starting point of the present work. We extend the related result \cite[Corollary 1.5]{JiZhu} to general real-linear Cauchy--Riemann operators, as arise naturally from the linearized and normal operators of pseudoholomorphic curves in nonintegrable almost complex manifolds.

Let $M$ be a closed connected Riemann surface, let $E\to M$ be a holomorphic vector bundle, and write
\begin{align*}
D=\ddbar_E+A:W^{1,p}(M,E)\longrightarrow
L^p(M,\Lambda^{0,1}T^*M\otimes E),
\end{align*}
where $\ddbar_E$ is the Cauchy--Riemann operator on the vector bundle $E$ and $A:E\longrightarrow \Lambda^{0,1}T^*M\otimes E$ is a smooth real-linear bundle homomorphism. For a Hermitian metric $h$ on $E$ and a $j$-Hermitian metric $g$ on $M$, define
\begin{align*}
\theta_{E,h,g}(x)
=\lambda_{\min}\bigl(\ii\Lambda_gR^{E,h}(x)\bigr),
\end{align*}
where $R^{E,h}$ is the curvature of the Chern connection $\nabla^{E,h}$ of $(E,h)$, $\Lambda_g$ is the dual Lefschetz operator of $(M,g)$, and $\lambda_{\min}(\cdot)$ denotes its smallest eigenvalue. Our first result is a surjectivity criterion for the real-linear Cauchy--Riemann operator $D$.

\begin{theorem}\label{thm:intro-L2}
If there exists a Hermitian metric $h$ on $E$ such that
	\begin{align}\label{eq:ioc}
	\int_M\theta_{E,h,g}\,dV_g+2\pi\chi(M)
	>\int_M\abs{A}_{g,h}^2\,dV_g,
	\end{align}
then $D$ is surjective.
\end{theorem}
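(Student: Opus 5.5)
Since $D$ is Fredholm, surjectivity is equivalent to the triviality of the cokernel. I will identify the cokernel with the kernel of the formal adjoint $D^*$ with respect to the $L^2$ pairing determined by $g$ and $\mathrm{Re}\,h$. So assume $\eta\in L^2(M,\Lambda^{0,1}T^*M\otimes E)$ satisfies $D^*\eta=0$, i.e.\ $\ddbar_E^*\eta=-A^*\eta$. Here $A^*$ is the pointwise real adjoint of $A$, and it satisfies $\abs{A^*\eta}\le\abs{A}_{g,h}\abs{\eta}$. Elliptic regularity makes $\eta$ smooth. The aim is to prove $\eta=0$.

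The first step is an $L^2$ lower bound for $\ddbar_E^*$. Write $\eta=\bar\omega\otimes s$ locally, or equivalently use the identification $\Lambda^{0,1}T^*M\otimes E\cong \overline{K_M}\otimes E$. A Bochner--Kodaira--Nakano identity on the curve then gives, for smooth $\eta$,
\begin{align*}
\norm{\ddbar_E^*\eta}^2=\norm{\nabla'\eta}^2+\int_M\bigl\langle(\ii\Lambda_gR^{E,h}+\tfrac12 S_g)\eta,\eta\bigr\rangle\,dV_g,
\end{align*}
where $S_g$ is the scalar curvature and $\nabla'$ is the $(1,0)$-part of the induced Chern connection on $T^{0,1}M\otimes E$. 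This is the identity behind the Ji--Zhu eigenvalue estimate \cite{JiZhu}. By Gauss--Bonnet, $\int_M\tfrac12 S_g\,dV_g=2\pi\chi(M)$. The difficulty is that only the \emph{integral} of $\theta_{E,h,g}+\tfrac12S_g$ is controlled, not its pointwise value. The standard remedy, which is what Ji--Zhu do, is a conformal change of the fiber metric $h\mapsto e^{-\varphi}h$ on $E$. Choose $\varphi$ solving
\begin{align*}
\ii\Lambda_g\partial\ddbar\varphi=c-\theta_{E,h,g}-\tfrac12S_g+\abs{A}^2_{g,h},
\end{align*}
with the constant $c$ fixed by solvability, namely
\begin{align*}
c\,\Vol_g(M)=\int_M\theta\,dV_g+2\pi\chi(M)-\int_M\abs{A}^2\,dV_g>0.
\end{align*}
This choice makes the twisted curvature term pointwise $\ge \abs{A}^2+c$. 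Equivalently, one can run a twisted Bochner formula with weight $e^{-\varphi}$.

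The main obstacle is that the twist changes the adjoint. With respect to $e^{-\varphi}h$, the kernel element $\eta$ corresponds to $e^{\varphi}\eta$ in the twisted adjoint, while $A^*$ is unchanged because it is pointwise real-linear and the conformal factor is real. The equation $\ddbar_{E,\varphi}^*\tilde\eta=-A^*\tilde\eta$ is therefore preserved for $\tilde\eta=e^{\varphi}\eta$. The twisted Bochner inequality then gives
\begin{align*}
\norm{A^*\tilde\eta}^2_\varphi\ge\int_M(\abs{A}^2+c)\abs{\tilde\eta}^2_\varphi\,dV_g,
\end{align*}
whereas pointwise $\abs{A^*\tilde\eta}\le\abs{A}\abs{\tilde\eta}$. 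Combining the two forces $c\norm{\tilde\eta}^2\le0$, hence $\eta=0$. I will also check that the constant in the Gauss--Bonnet term, including the factor $\tfrac12$ and the sign conventions for $\Lambda_g$ on $(0,1)$-forms, matches $2\pi\chi(M)$. This bookkeeping is where the proof is most error-prone. The result is independent of $g$ because both $\int\theta\,dV_g$ and $\int\abs{A}^2dV_g$ are conformally invariant on a surface.
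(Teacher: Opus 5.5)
Your argument is correct, but it takes a genuinely different route from the paper.

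\textbf{The paper's route.} The paper never changes the fibre metric. It uses \Cref{lem:smooth-majorant} to pick a smooth $\rho>\abs{A}_{g,h}^2$ with $\int_M\rho\,dV_g<C$, and passes to the conformal base metric $\widehat g=\rho g$. For this metric $\abs{A}_{\widehat g,h}<1$ pointwise, while by \Cref{lem:conformal} the integral $\int_M\theta\,dV$ is unchanged and $\Vol_{\widehat g}(M)<C$. It then invokes the Ji--Zhu bound (\Cref{thm:smallest-eigenvalue}) as a black box, so that $\norm{\ddbar_E^{*}\eta}>\norm{\eta}>\norm{A^{*}\eta}$ in $L^2(\widehat g,h)$ for $\eta\neq0$.

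\textbf{Your route.} You keep $g$ fixed and twist $h$ by $e^{-\varphi}$. One Poisson equation absorbs both the non-constancy of $\theta_{E,h,g}+\tfrac12 S_g$ and the term $\abs{A}_{g,h}^2$, and you then apply the Bochner--Kodaira--Nakano identity directly to a kernel element. The observation that makes this work, which you make correctly, is that only injectivity of $D^*$ is needed. Kernels, unlike eigenvalues, behave trivially under a change of weight: $D^*_\varphi\beta=e^{\varphi}D^*(e^{-\varphi}\beta)$. The pointwise adjoint $A^*$ and the operator norm $\abs{A}$ are unchanged, since both fibres are rescaled by the same factor. Your bookkeeping also checks out. $\tfrac12 S_g$ is the Gaussian curvature $\ii\Lambda_gR^{T^{1,0}M}$, whose integral is $2\pi\chi(M)$, so solvability makes $c\Vol_g(M)$ exactly the gap in \eqref{eq:ioc}.

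\textbf{Comparison.} Your route is self-contained modulo the Bochner--Kodaira--Nakano identity. It essentially reproves the part of the Ji--Zhu estimate that is needed, with $\abs{A}^2$ built into the weight, and ends in a single pointwise comparison. The paper's route is more modular: beyond the black-box estimate it needs only the elementary \Cref{lem:smooth-majorant} and the conformal invariance of \Cref{lem:conformal}. It also yields an explicit lower bound for $D^*$ in the $\widehat g$-norm.

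\textbf{One point to record.} $\theta_{E,h,g}$ and $\abs{A}_{g,h}^2$ are in general only Lipschitz, so your $\varphi$ is $C^{2,\alpha}$ rather than smooth. This is enough for the Bochner identity. Alternatively, replace the right-hand side by a smooth majorant, as in \Cref{lem:smooth-majorant}, at the cost of a slightly smaller $c>0$.
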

Although the hypothesis is global, the proof reduces it to a pointwise comparison. A suitable conformal metric converts the integral inequality into a pointwise bound on the zero-order term $A$, which can then be compared with the lower bound for $\lambda_{\min}(\ddbar_E\ddbar_E^*)$. The hypothesis \eqref{eq:ioc} is invariant under conformal changes of $g$ since $M$ has real dimension two (\Cref{lem:conformal}).

The integral of the smallest eigenvalue can be intrinsically optimized over all Hermitian metrics on $E$ in terms of the minimal Harder--Narasimhan slope (see \eqref{eq:mumin}).

\begin{theorem}\label{th:Op-integral}
For every holomorphic vector bundle $E\to M$,
	\begin{align*}
	\sup_{h\in\Herm(E)}\int_M\theta_{E,h,g}\,dV_g
	=2\pi\mu_{\min}^{\HN}(E).
	\end{align*}
\end{theorem}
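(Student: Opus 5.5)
We prove the two inequalities separately; throughout, $\deg$ is computed with the normalization $\deg F=\frac{1}{2\pi}\int_M \tr(\ii\Lambda_g R^{F})\,dV_g$ for any holomorphic bundle $F$ with any Hermitian metric.

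\emph{Upper bound.} Let $Q$ be the last quotient of the Harder--Narasimhan filtration, so $E\to Q\to0$ is a holomorphic surjection and $\mu(Q)=\mu_{\min}^{\HN}(E)$. Fix $h\in\Herm(E)$ and give $Q$ the quotient metric. The kernel $S\subset E$ may fail to be a subbundle where torsion would appear; since $Q$ is the last HN quotient it is torsion-free, hence locally free on a curve, so $0\to S\to E\to Q\to 0$ is an exact sequence of holomorphic bundles. By the curvature-decreasing property of subbundles (dually, increasing for quotients) we have
\[
\ii\Lambda_g R^{Q}=\pi_Q\,(\ii\Lambda_g R^{E})\,\pi_Q^*+\Lambda_g(\ii\,\beta\wedge\beta^*)\ \ge\ \pi_Q(\ii\Lambda_gR^E)\pi_Q^*
\]
as Hermitian endomorphisms, where $\beta$ is the second fundamental form. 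With the correct sign convention this is a positive semidefinite addition, as in Griffiths' formula. Taking traces gives
\[
\tr(\ii\Lambda_gR^Q)\ge \tr\bigl(\pi_Q \ii\Lambda_gR^E\pi_Q^*\bigr)\ge \rank(Q)\,\theta_{E,h,g}
\]
pointwise. Integrating, $2\pi\deg Q\ge \rank Q\int_M\theta_{E,h,g}\,dV_g$, which is $\int_M\theta\le 2\pi\mu_{\min}^{\HN}(E)$.

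\emph{Lower bound.} This is the main step. Write the HN filtration $0=E_0\subset\cdots\subset E_k=E$ with semistable quotients $G_i=E_i/E_{i-1}$ of strictly decreasing slopes $\mu_1>\cdots>\mu_k=\mu_{\min}$.

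\begin{enumerate}[label=(\roman*)]
\item Each semistable $G_i$ has a Jordan--H\"older filtration with stable pieces. By Narasimhan--Seshadri/Donaldson, each stable piece carries a Hermitian--Einstein metric, and $\ii\Lambda_g R$ is then the constant $2\pi\mu_i/\Vol(M)$ times the identity. Here $g$ is normalized; the integral $\int_M\theta$ is conformally invariant by \Cref{lem:conformal}-type reasoning, since $\ii\Lambda_gR\,dV_g$ is independent of $g$. It therefore suffices to take any $g$ and solve the HE equation with respect to it.
\item Glue the metrics on graded pieces by a $C^\infty$ splitting $E\cong\bigoplus G_i$, and scale the off-diagonal part: use the gauge $\mathrm{diag}(t^{i})$-type rescaling $h_t$, in which the extension classes (second fundamental forms) are multiplied by powers of $t$ that tend to $0$ as $t\to0$.
\end{enumerate}

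Then $\ii\Lambda_gR^{E,h_t}\to\bigoplus_i\ii\Lambda_gR^{G_i}$ uniformly. Within a semistable non-stable $G_i$, the same rescaling of its Jordan--H\"older extensions makes its curvature tend to $2\pi\mu_i/\Vol\cdot\Id$. Hence
\[
\theta_{E,h_t,g}\to\min_i 2\pi\mu_i/\Vol(M)=2\pi\mu_{\min}/\Vol(M)
\]
uniformly, and $\int_M\theta_{E,h_t,g}\to 2\pi\mu_{\min}^{\HN}(E)$.

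The main obstacle is making the degeneration estimate precise. In a holomorphic frame adapted to the filtration, $\ddbar_E$ is upper triangular with off-diagonal terms $\beta_{ij}$. Conjugating by $\mathrm{diag}(t^{k},t^{k-1},\dots)$ multiplies $\beta_{ij}$ by $t^{j-i}$, so the Chern curvature equals the block-diagonal curvature plus terms $O(t)$ in $C^0$. This gives uniform convergence of eigenvalues, hence of $\lambda_{\min}$ by continuity, and therefore the claimed supremum.
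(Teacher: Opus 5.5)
Your proof is correct. The upper bound is essentially the paper's \Cref{lem:HN-upper}: quotient curvature formula, compress $\ii\Lambda_gR^{E,h}\ge\theta_{E,h,g}\Id_E$, take the trace and integrate. The only difference is that you apply it directly to the last Harder--Narasimhan quotient $Q_\ell$, which already suffices. The paper instead proves the bound for every quotient and then invokes \Cref{prop:quotient-mumin}.

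The genuine difference is the lower bound. The paper does not construct approximating metrics. It quotes Bradlow's result (\Cref{thm:Bradlow}), which supplies $h_\varepsilon$ with $\ii\Lambda_gR^{E,h_\varepsilon}\ge\bigl(2\pi\mu_{\min}^{\HN}(E)/\Vol_g(M)-\varepsilon\bigr)\Id_E$, and then integrates. You build such metrics by hand on the curve:
\begin{enumerate}[label=\textup{(\alph*)}]
\item refine the HN filtration by Jordan--H\"older filtrations, giving a filtration by holomorphic subbundles with stable quotients;
\item put Narasimhan--Seshadri/Donaldson Hermitian--Einstein metrics on those quotients;
\item choose a $C^\infty$ splitting, so that $\ddbar_E=\ddbar_0+\beta$ with $\beta$ block strictly upper triangular;
\item conjugate by $\mathrm{diag}(t^N,\dots,t)$, which makes $\beta$ of size $O(t)$ in $C^1$ and hence the Chern curvature $R_0+O(t)$ in $C^0$.
\end{enumerate}
Since $\lambda_{\min}$ is $1$-Lipschitz in operator norm, $\theta_{E,h_t,g}$ then converges uniformly to $2\pi\mu_{\min}^{\HN}(E)/\Vol_g(M)$. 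This is exactly the mechanism the paper only sketches (``a direct computation shows'') in \Cref{ex:general-HN-curve-extension} with $h_t=t^2h_+\oplus h_-$.

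Your route is self-contained modulo Narasimhan--Seshadri. It gives an explicit sequence along which $\theta$ itself converges uniformly, not just a one-sided bound, and that sequence can be fed directly into \Cref{cor:HN-A}. It relies on the fact that on a curve the HN and Jordan--H\"older filtrations are by subbundles, so a global smooth splitting exists. The paper's route is shorter because it outsources this construction to Bradlow.

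Three small presentation fixes:
\begin{itemize}
\item Do a single rescaling along the refined filtration rather than a nested ``HN first, then Jordan--H\"older'' limit.
\item Replace ``holomorphic frame adapted to the filtration'' by ``$C^\infty$ splitting''; global holomorphic frames need not exist.
\item Drop the conformal-invariance remark: Hermitian--Einstein metrics exist with respect to any $g$ on a Riemann surface, and $\int_M\theta$ computes to $2\pi\mu_{\min}^{\HN}(E)$ regardless.
\end{itemize}
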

For a line bundle, \Cref{th:Op-integral} reduces to the Chern--Weil formula. More generally, if $E$ is semistable, then $\mu_{\min}^{\HN}(E)=\mu(E)$, so the supremum of the integrated smallest eigenvalue is determined solely by $\mu(E)$. For a general bundle, the supremum in \Cref{th:Op-integral} need not be attained by a single Hermitian metric. It is nevertheless approached by a sequence of Harder--Narasimhan approximating metrics. This approximation principle, together with \Cref{thm:intro-L2}, leads naturally to the following criterion.
\begin{corollary}\label{cor:AHNc}
Let $\{h_\nu\}$ be a sequence of Hermitian metrics on $E$ such that
	\begin{align*}
	\int_M\theta_{E,h_\nu,g}\,dV_g
	\longrightarrow 2\pi\mu_{\min}^{\HN}(E).
	\end{align*}
If
	\begin{align*}
		2\pi\bigl(\mu_{\min}^{\HN}(E)+\chi(M)\bigr)>\limsup_{\nu\to\infty}\int_M\abs{A}_{g,h_\nu}^2\,dV_g,
	\end{align*}
then $D$ is surjective.
\end{corollary}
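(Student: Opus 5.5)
The corollary should follow directly from \Cref{thm:intro-L2}. The idea is to pick one metric $h_\nu$ with $\nu$ large enough that \eqref{eq:ioc} holds for $h=h_\nu$.

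Set
\begin{align*}
\delta:=2\pi\bigl(\mu_{\min}^{\HN}(E)+\chi(M)\bigr)-\limsup_{\nu\to\infty}\int_M\abs{A}_{g,h_\nu}^2\,dV_g .
\end{align*}
By hypothesis $\delta>0$. Here the $\limsup$ is finite, being bounded above by the left-hand side; it is also nonnegative. I will use two facts, valid for all sufficiently large $\nu$:
\begin{align*}
\int_M\abs{A}_{g,h_\nu}^2\,dV_g<\limsup_{\mu\to\infty}\int_M\abs{A}_{g,h_\mu}^2\,dV_g+\tfrac{\delta}{2}
\end{align*}
by the definition of $\limsup$, and
\begin{align*}
\int_M\theta_{E,h_\nu,g}\,dV_g>2\pi\mu_{\min}^{\HN}(E)-\tfrac{\delta}{2}
\end{align*}
by the assumed convergence.

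Fix such a $\nu$ and add $2\pi\chi(M)$ to both sides of the second inequality. This gives
\begin{align*}
\int_M\theta_{E,h_\nu,g}\,dV_g+2\pi\chi(M)
>\limsup_{\mu\to\infty}\int_M\abs{A}_{g,h_\mu}^2\,dV_g+\tfrac{\delta}{2}
>\int_M\abs{A}_{g,h_\nu}^2\,dV_g .
\end{align*}
This is exactly \eqref{eq:ioc} with $h=h_\nu$. \Cref{thm:intro-L2} then gives that $D$ is surjective.

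The only subtlety is bookkeeping. Surjectivity of $D$ does not depend on $h$, since $D$ is defined without reference to any metric. The norm $\abs{A}_{g,h_\nu}$ changes with $\nu$, which is why the hypothesis is phrased with a $\limsup$ rather than a limit. No uniformity in $\nu$ is required, because \Cref{thm:intro-L2} is applied to a single metric. \Cref{th:Op-integral} is not logically needed here; it only guarantees that such sequences $\{h_\nu\}$ exist.
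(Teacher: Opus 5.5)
Your proof is correct and is essentially the paper's argument. The paper uses $\liminf_\nu(x_\nu-y_\nu)\ge\liminf_\nu x_\nu-\limsup_\nu y_\nu$ to see that the hypothesis of \Cref{thm:intro-L2} holds for $h=h_\nu$ when $\nu$ is large, then applies that theorem to a single such metric; your $\delta/2$ bookkeeping is just the explicit $\varepsilon$-form of that inequality.
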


A useful lifting argument (\Cref{prop:filtered}) shows that, whenever $D$ preserves a holomorphic filtration, surjectivity of the operators induced on the successive quotients implies surjectivity of $D$. Combining this observation with the Harder--Narasimhan criterion \Cref{cor:AHNc} and the rank-one surjectivity theorem \cite[Theorem $1'$]{HLS} leads to the following quotientwise result.
\begin{theorem}\label{thm:intro-filtration-criteria}
Suppose $D=\ddbar_E+A$ preserves a holomorphic filtration
	\begin{align*}
		0=E_0\subset E_1\subset\cdots\subset E_\ell=E,
	\end{align*}
and set $Q_i:=E_i/E_{i-1}$. Assume that
	\begin{align*}
		\mu_{\min}^{\HN}(Q_i)>-\chi(M)
	\end{align*}
for every $1\leq i\leq\ell$. Suppose moreover that, for each $i$, one of the following conditions holds:
	\begin{enumerate}[label=\textup{(\roman*)}]
		\item $A$ strictly lowers the $i$-{\rm th} step of the filtration, i.e.,
		\begin{align*}
			A(E_i)\subseteq
			\Lambda^{0,1}T^*M\otimes E_{i-1};
		\end{align*}
		\item
$Q_i$ is a holomorphic line bundle.
	\end{enumerate}
Then $D$ is surjective.
\end{theorem}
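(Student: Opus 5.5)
We argue by induction along the filtration, using the lifting argument of \Cref{prop:filtered}. Since $D$ preserves the filtration, i.e. $D(W^{1,p}(M,E_i))\subseteq L^p(M,\Lambda^{0,1}T^*M\otimes E_i)$, it induces real-linear Cauchy--Riemann operators
\begin{align*}
D_i=\ddbar_{Q_i}+A_i:W^{1,p}(M,Q_i)\longrightarrow L^p(M,\Lambda^{0,1}T^*M\otimes Q_i),
\end{align*}
where $\ddbar_{Q_i}$ is the quotient holomorphic structure and $A_i$ is the real-linear map induced by $A$ on $E_i/E_{i-1}$. This is well defined because both $\ddbar_E$ and $A$ preserve $E_{i-1}$. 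By \Cref{prop:filtered}, it suffices to show that every $D_i$ is surjective. (If one prefers the argument spelled out: given $\eta$ with values in $E_i$, solve $D_i\bar\xi=[\eta]$, lift $\bar\xi$ to a $W^{1,p}$ section $\xi$ of $E_i$ using a smooth splitting, and note that $\eta-D\xi$ takes values in $E_{i-1}$. Induction on $i$ then finishes.)

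We treat the two cases separately.

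\emph{Case (i).} Here $A(E_i)\subseteq\Lambda^{0,1}T^*M\otimes E_{i-1}$, so $A_i=0$ and $D_i=\ddbar_{Q_i}$ is complex linear. Apply \Cref{cor:AHNc} to $Q_i$ with zero-order term zero and any sequence of Hermitian metrics $h_\nu$ on $Q_i$ approximating the supremum in \Cref{th:Op-integral}. The required inequality reduces to $2\pi(\mu_{\min}^{\HN}(Q_i)+\chi(M))>0$, which is exactly the hypothesis. Hence $D_i$ is surjective.

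\emph{Case (ii).} Here $Q_i$ is a line bundle, so $\mu_{\min}^{\HN}(Q_i)=\deg Q_i$. The hypothesis then reads $\deg Q_i>-\chi(M)$, and \cite[Theorem $1'$]{HLS} gives surjectivity of the rank-one real-linear Cauchy--Riemann operator $D_i$ regardless of $A_i$.

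The main technical point is the well-definedness and regularity of the induced operators, together with the lifting step. Specifically, one must check three things:
\begin{enumerate}[label=\textup{(\alph*)}]
\item $A$ descends to a smooth real-linear homomorphism $Q_i\to\Lambda^{0,1}T^*M\otimes Q_i$;
\item a $W^{1,p}$ section of $Q_i$ lifts to a $W^{1,p}$ section of $E_i$, which holds via a smooth (non-holomorphic) $C^\infty$ splitting $E_i\cong E_{i-1}\oplus Q_i$;
\item the error term $\eta-D\xi$ lies in $L^p$ with values in $E_{i-1}$.
\end{enumerate}
All of these are packaged in \Cref{prop:filtered}, which we assume. Once that is granted, the theorem follows by combining the two cases.
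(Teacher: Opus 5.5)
Your proposal is correct and matches the paper's proof step for step. You reduce to the quotient operators $D_i$ via \Cref{prop:filtered}, handle case (i) by applying \Cref{cor:AHNc} with $A_i=0$ along a Harder--Narasimhan approximating sequence (which exists by \Cref{th:Op-integral}), and handle case (ii) by the rank-one theorem of Hofer--Lizan--Sikorav (\Cref{thm:HLSline}). Working in $W^{1,p}$ instead of with smooth sections, as the paper does in \Cref{prop:filtered}, changes nothing, since surjectivity is independent of $p$ by elliptic regularity.
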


The two alternatives correspond to complementary quotientwise conditions. In (i), the quotient $Q_i$ may have arbitrary rank, but the induced zero-order term on $Q_i$ vanishes. In (ii), the quotient has rank one, and no vanishing or size condition is imposed on the induced zero-order term. The two quotientwise conditions may be employed independently at different steps of the same filtration.

The first alternative is particularly natural for the Harder--Narasimhan filtration. If $A$ strictly lowers this filtration, then the induced zero-order term vanishes on every Harder--Narasimhan quotient. Since these quotients are semistable and their slopes are bounded below by $\mu_{\min}^{\HN}(E)$, \Cref{thm:intro-filtration-criteria} immediately gives the following metric-independent criterion.

\begin{corollary}\label{cor:ASNlc}
Assume that $A$ strictly lowers the Harder--Narasimhan filtration \eqref{eq:HNfiltration}, i.e.,
	\begin{align*}
		A(E_i)\subseteq
		\Lambda^{0,1}T^*M\otimes E_{i-1}\ \text{for every}\ 1\leq i\leq\ell,
	\end{align*}
and
	\begin{equation*}
		\mu_{\min}^{\HN}(E)>-\chi(M).
	\end{equation*}
Then $D$ is surjective.
\end{corollary}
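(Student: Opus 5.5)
The plan is to apply \Cref{thm:intro-filtration-criteria} to the Harder--Narasimhan filtration \eqref{eq:HNfiltration} itself,
\begin{align*}
0=E_0\subset E_1\subset\cdots\subset E_\ell=E,
\end{align*}
using alternative (i) at every step. Before that, we check that $D$ preserves this filtration. The operator $\ddbar_E$ preserves each $E_i$ because the $E_i$ are holomorphic subbundles. The hypothesis $A(E_i)\subseteq\Lambda^{0,1}T^*M\otimes E_{i-1}\subseteq\Lambda^{0,1}T^*M\otimes E_i$ shows that $A$ preserves them too. So $D$ preserves the filtration, and condition (i) of \Cref{thm:intro-filtration-criteria} holds for every $1\le i\le\ell$ by assumption.

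It remains to verify the slope hypothesis $\mu_{\min}^{\HN}(Q_i)>-\chi(M)$ for each quotient $Q_i=E_i/E_{i-1}$. By the defining properties of the Harder--Narasimhan filtration, each $Q_i$ is semistable. Hence its own Harder--Narasimhan filtration is trivial and $\mu_{\min}^{\HN}(Q_i)=\mu(Q_i)$. The slopes are strictly decreasing, $\mu(Q_1)>\cdots>\mu(Q_\ell)$, and $\mu_{\min}^{\HN}(E)=\mu(Q_\ell)$ by \eqref{eq:mumin}. Therefore
\begin{align*}
\mu_{\min}^{\HN}(Q_i)=\mu(Q_i)\ge\mu(Q_\ell)=\mu_{\min}^{\HN}(E)>-\chi(M)
\end{align*}
for every $i$. \Cref{thm:intro-filtration-criteria} then gives surjectivity of $D$.

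No step here is a real obstacle; the work lies in \Cref{thm:intro-filtration-criteria}. The only point needing care is confirming that the paper's definition \eqref{eq:mumin} of $\mu_{\min}^{\HN}$ is the slope of the last Harder--Narasimhan quotient, equivalently the minimal slope of a quotient bundle of $E$. With that definition, $\mu_{\min}^{\HN}(Q)=\mu(Q)$ for semistable $Q$. If instead it is phrased as an infimum over quotient sheaves, we would use that a semistable $Q$ has no quotient of smaller slope. Either way the inequality above holds.

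As a consistency check, on each quotient the induced operator is exactly $\ddbar_{Q_i}$. By semistability and Kobayashi--Hitchin, $Q_i$ admits metrics making $\int\theta$ arbitrarily close to $2\pi\mu(Q_i)$, so the quotientwise surjectivity could also be read off directly from \Cref{cor:AHNc} with $A=0$. This matches how the filtration theorem handles alternative (i).
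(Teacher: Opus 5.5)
Your proposal is correct and follows essentially the paper's proof. Both apply \Cref{thm:filtration-criteria} to the Harder--Narasimhan filtration with alternative (i) at every step, and use semistability of each $Q_i$ to get $\mu_{\min}^{\HN}(Q_i)=\mu(Q_i)\ge\mu_{\min}^{\HN}(E)>-\chi(M)$; your explicit check that $D$ preserves the filtration (via $E_{i-1}\subseteq E_i$) just spells out a step the paper leaves implicit.
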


We can apply the preceding operator-theoretic results to linearized Cauchy--Riemann operators on pullback tangent bundles and, for immersed curves, to the corresponding normal operators. These results yield higher-rank automatic transversality criteria for closed pseudoholomorphic curves under any of the integral, asymptotic Harder--Narasimhan, or quotientwise filtration conditions developed above (\Cref{cor:Jcurve}).

\Cref{sec:S6} is devoted to the study of pseudoholomorphic spheres in $S^6$. For an arbitrary almost complex structure, we establish an $L^2$ lower bound for the anti-complex-linear part of the normal operator, together with an obstruction to invariant holomorphic line subbundles.

\begin{theorem}\label{thm:S6-transverse-symmetry'}
	Let $J$ be an almost complex structure on $S^6$, and let $u:\mathbb{CP}^1\to(S^6,J)$ be an immersed pseudoholomorphic sphere satisfying
	\begin{align}\label{eq:S6-splitting'}
		N_u\cong\mathcal O(-1)\oplus\mathcal O(-1).
	\end{align}
	Suppose that there exists $z_0\in\mathbb{CP}^1$ such that
	\begin{align*}
		T_{u(z_0)}\bigl(\operatorname{Aut}(S^6,J)\cdot u(z_0)\bigr)\not\subseteq {\rm d}u_{z_0}(T_{z_0}\mathbb{CP}^1).
	\end{align*}
	Let $h$ be a Hermitian--Einstein metric on $N_u$. Then
	\begin{enumerate}[label=\textup{(\roman*)}]
		\item The anti-complex-linear part satisfies
		\begin{align*}
			\int_{\mathbb{CP}^1}|A_u^N|_{g,h}^2\,dV_g\geq2\pi.
		\end{align*}
		In particular, $J$ cannot be integrable.
		\item There is no holomorphic line subbundle $L\cong\mathcal O(-1)\subset N_u$ that is preserved by $A_u^N$.
	\end{enumerate}
\end{theorem}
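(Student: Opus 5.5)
Both parts rest on one fact: the normal operator $D_u^N=\ddbar_{N_u}+A_u^N$ is \emph{not} surjective. Its index is $\rank_\C N_u\cdot\chi(\mathbb{CP}^1)+2\deg N_u=2\cdot2+2\cdot(-2)=0$, so it suffices to produce a nonzero element of $\ker D_u^N$. I would take it from the symmetry hypothesis. Choose $\xi$ in the Lie algebra of $\operatorname{Aut}(S^6,J)$ whose infinitesimal action at $u(z_0)$ is not tangent to the image of ${\rm d}u_{z_0}$. The vector field $X_\xi$ generates $J$-holomorphic automorphisms, so $\phi_t\circ u$ is a family of $J$-curves and $X_\xi\circ u$ lies in $\ker D_u$. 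Projecting to $N_u=u^*TS^6/{\rm d}u(T\mathbb{CP}^1)$ gives a section $\eta\in\ker D_u^N$, by the standard compatibility of $D_u$ with the normal projection for immersed curves. It is nonzero at $z_0$ by hypothesis. Hence $\dim\ker D_u^N\ge1$, and index $0$ forces $\coker D_u^N\neq0$.

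For (i), I argue by contradiction using \Cref{thm:intro-L2}. The Hermitian--Einstein metric on $\mathcal O(-1)\oplus\mathcal O(-1)$ has $\ii\Lambda_gR=c\,\Id$, with $c$ constant and $\int_M c\,dV_g=2\pi\mu(N_u)=-2\pi$. Thus $\int\theta\,dV_g=-2\pi$, and the left side of \eqref{eq:ioc} equals $-2\pi+4\pi=2\pi$. If $\int|A_u^N|^2_{g,h}dV_g<2\pi$, \Cref{thm:intro-L2} would make $D_u^N$ surjective, which is a contradiction. Hence $\int|A_u^N|^2\ge2\pi$. If $J$ were integrable, $A_u^N$ would vanish: the linearized operator is then complex linear and equals the holomorphic $\ddbar$ of $u^*TS^6$, and hence also of $N_u$. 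That would violate the bound, so $J$ cannot be integrable. The point to check is that the zero-order term of $D_u^N$ relative to the holomorphic structure of $N_u$ is exactly the anti-linear part, so that its complex-linear part is absorbed into $\ddbar_{N_u}$.

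For (ii), suppose some $L\cong\mathcal O(-1)\subset N_u$ is preserved by $A_u^N$. Then $D_u^N$ preserves the filtration $0\subset L\subset N_u$. Both $L$ and $Q=N_u/L$ are line bundles, and $Q\cong\mathcal O(-1)$ by degree count. Their slopes are $-1>-\chi(\mathbb{CP}^1)=-2$. Alternative (ii) of \Cref{thm:intro-filtration-criteria} then applies at both steps and gives surjectivity, contradicting the first paragraph.

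The main obstacle is the first paragraph, namely producing a nonzero kernel element. One must verify that the infinitesimal automorphism yields an element of $\ker D_u$, since automorphisms preserve $J$. One must also check that its normal projection is nonzero precisely under the stated non-tangency condition. Everything after that is bookkeeping with earlier results.
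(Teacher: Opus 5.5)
Your proposal is correct and follows the paper's proof essentially step for step: the infinitesimal automorphism yields a nonzero element of $\ker D_u^N$, index zero then forces a nontrivial cokernel, and (i) and (ii) follow by contradiction from \Cref{thm:intro-L2} with the Hermitian--Einstein metric and from alternative (ii) of \Cref{thm:intro-filtration-criteria} applied to $0\subset L\subset N_u$. The only cosmetic differences are that you read off $\deg N_u=-2$ directly from the splitting hypothesis rather than from $c_1(TS^6,J)=0$, and that you invoke the operator-level theorems directly rather than through \Cref{cor:Jcurve}.
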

For the standard nearly K\"ahler structure, we give a geometric characterization of the splitting hypothesis: \eqref{eq:S6-splitting'} holds if and only if $u$ is totally geodesic (\Cref{prop:S6-totally-geodesic}). Since the $G_2$-action on $S^6$ is transitive, the transverse-orbit condition is then automatic. Consequently, the conclusions of \Cref{thm:S6-transverse-symmetry'} apply to every immersed totally geodesic pseudoholomorphic sphere (\Cref{cor:S6-TG-energy}).

\section{Preliminaries}
\subsection{Real-linear Cauchy--Riemann operators and pseudoholomorphic curves}

Let $(M,j)$ be a closed Riemann surface, and let $E\to M$ be a holomorphic vector bundle. Set
\begin{align*}
	\Omega^{0,1}(E)
	=
	C^\infty(M,\Lambda^{0,1}T^*M\otimes E).
\end{align*}
A \emph{real-linear Cauchy--Riemann operator} on $E$ is a first-order real-linear differential operator
\begin{align*}
	D=\ddbar_E+A:
	C^\infty(M,E)
	\longrightarrow
	\Omega^{0,1}(E),
\end{align*}
where $\ddbar_E$ is the Cauchy--Riemann operator defining the holomorphic structure on $E$, and $A:E\longrightarrow\Lambda^{0,1}T^*M\otimes E$ is a real-linear bundle homomorphism. For $1<p<\infty$, the Sobolev extension
\begin{align*}
	D:
	W^{1,p}(M,E)
	\longrightarrow
	L^p(M,\Lambda^{0,1}T^*M\otimes E)
\end{align*}
is Fredholm. Its kernel and cokernel are represented by smooth sections by elliptic regularity, and its surjectivity is independent of $p$.

Let $g$ be a $j$-Hermitian metric on $M$ and $h$ a Hermitian metric on $E$. We denote by $\abs{A}_{g,h}(x)$ the operator norm of the fibre map
\begin{align*}
	A_x:
	(E_x,h_x)
	\longrightarrow
	\bigl(
	\Lambda^{0,1}T_{x}^*M\otimes E_x,
	g_x^*\otimes h_x
	\bigr).
\end{align*}
All $L^2$ inner products and formal adjoints below are taken with respect to these metrics. For real-linear operators, formal adjoints are understood with respect to the underlying real $L^2$ inner product. Let $\nabla^{E,h}$ be the Chern connection of $(E,\ddbar_E,h)$ and let $R^{E,h}$ denote its curvature. Define
\begin{align*}
	\theta_{E,h,g}(x)
	:=
	\lambda_{\min}
	\bigl(
	\ii\Lambda_gR^{E,h}(x)
	\bigr),
\end{align*}
where $\lambda_{\min}(\cdot)$ denotes the smallest eigenvalue and $\Lambda_g$ is the dual Lefschetz operator associated with $g$.

The following Dolbeault eigenvalue estimate plays an essential role in this paper.
\begin{proposition}[{\cite[Corollary 1.3]{JiZhu}}]
	\label{thm:smallest-eigenvalue}
	For every Hermitian holomorphic vector bundle $(E,\ddbar_E,h)$ over a closed Riemann surface $(M,g)$,
	\begin{align*}
		\lambda_{\min}(\ddbar_E\ddbar_E^{*})
		\geq
		\frac{1}{\Vol_g(M)}
		\left(
		\int_M\theta_{E,h,g}\,dV_g+2\pi\chi(M)
		\right).
	\end{align*}
\end{proposition}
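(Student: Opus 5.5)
The plan is to reduce the eigenvalue bound to a Bochner--Kodaira--Nakano identity on $E$-valued $(0,1)$-forms, and then to replace the pointwise curvature lower bound by its average, using the two-dimensionality of $M$.

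\emph{Step 1: the Bochner identity.} Since $\ddbar_E\ddbar_E^*$ is a nonnegative self-adjoint elliptic operator on $\Omega^{0,1}(E)$, we have
\[
\lambda_{\min}(\ddbar_E\ddbar_E^*)=\inf_{\alpha\neq0}\frac{\norm{\ddbar_E^*\alpha}^2}{\norm{\alpha}^2}.
\]
Using the metric $g$, identify $\Lambda^{0,1}T^*M\otimes E$ with $E\otimes T^{1,0}M$. Under this identification $\ddbar_E^*$ becomes, up to sign and conjugation conventions, the $(1,0)$-part $\nabla'$ of the Chern connection of $E\otimes T^{1,0}M$, followed by contraction. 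The Kodaira--Nakano commutation $[\ii\Lambda_g,\cdot]$ then gives, for every smooth $\alpha$,
\[
\norm{\ddbar_E^*\alpha}^2=\norm{\nabla''_{\!*}\alpha}^2+\int_M\bigl\langle(\ii\Lambda_gR^{E,h}+K_g)\alpha,\alpha\bigr\rangle\,dV_g ,
\]
where $K_g$ is the Gaussian curvature and $\nabla''_{\!*}\alpha$ denotes the remaining first-order piece, which is nonnegative in norm. In particular, pointwise
\[
\norm{\ddbar_E^*\alpha}^2\ \geq\ \int_M(\theta_{E,h,g}+K_g)\,\abs{\alpha}^2\,dV_g .
\]
This already yields $\lambda_{\min}\geq\min_M(\theta_{E,h,g}+K_g)$.

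\emph{Step 2: from pointwise to average.} The claimed bound is exactly $\lambda_{\min}\geq c$, where
\[
c:=\frac{1}{\Vol_g(M)}\int_M(\theta_{E,h,g}+K_g)\,dV_g ,
\]
by Gauss--Bonnet, $\int_M K_g\,dV_g=2\pi\chi(M)$. So the task is to upgrade the pointwise minimum to the mean. I would solve the Poisson equation
\[
\Delta_g\psi=\theta_{E,h,g}+K_g-c ,
\]
which is solvable because the right-hand side has zero mean; $\theta$ is only Lipschitz, so $\psi\in C^{1,\beta}$, and this regularity suffices for the integrations by parts. I would then use $\psi$ as a twisting weight.

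\emph{Step 3: the twisted estimate.} There are two natural ways to use $\psi$, and I would test both.
\begin{enumerate}[label=\textup{(\alph*)}]
\item Replace $h$ by $he^{-\psi}$. This shifts $\ii\Lambda_gR^{E,h}$ by the multiple $\Delta_g\psi\cdot\Id$, so the new curvature term is $\geq c$ pointwise. One then compares the Rayleigh quotients via the conjugation $\ddbar^*_{he^{-\psi}}=e^{\psi}\ddbar^*_h e^{-\psi}$.
\item Work directly with the Ohsawa--Takegoshi-type twisted Bochner formula with twist $\eta=e^{\pm\psi}$ inserted into $\norm{\sqrt{\eta}\,\ddbar^*\alpha}^2$. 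The $\ii\partial\ddbar\eta$ term exactly cancels the nonconstant part of the curvature, leaving $c$.
\end{enumerate}
An alternative I would also test is a conformal change $g\mapsto e^{2f}g$. On a surface the $L^2$ norm of $(0,1)$-forms and the $2$-form $(\theta+K)\,dV$ change only by an exact term $-\Delta f\,dV$. This is the same mechanism used later for \Cref{lem:conformal}.

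\emph{Main obstacle.} The main difficulty is Step 3. Any reweighting changes the operator whose spectrum is being estimated, so one must show that the error terms, such as $\langle\nabla\psi,\nabla\abs{\alpha}^2\rangle$ or the mismatch of weights between numerator and denominator, either vanish or have a favorable sign. My first attempt would be to take $\alpha$ to be an eigenform for $\lambda_{\min}$. Its eigen-equation should let one absorb the cross term using the nonnegative gradient term from Step 1, in the style of a Bochner--Kato argument. I would expect this to close precisely because $\dim_{\C}M=1$, where $\Lambda_g$ and the conformal structure interact so that the weight enters the identity only through $\Delta\psi$.
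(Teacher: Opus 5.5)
\textbf{Step 3 has a genuine gap, and it cannot be closed.} The inequality as stated, for an arbitrary metric $g$, is false.

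Your Step 1 is correct. Through $\Lambda^{0,1}T^*M\otimes E\cong\Lambda^{1,1}T^*M\otimes(E\otimes T^{1,0}M)$, the Bochner--Kodaira--Nakano identity gives $\norm{\ddbar_E^*\alpha}^2\ge\int_M(\theta_{E,h,g}+K_g)\abs{\alpha}^2\,dV_g$, hence $\lambda_{\min}(\ddbar_E\ddbar_E^*)\ge\min_M(\theta_{E,h,g}+K_g)$.

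Step 3 carries the whole content of the statement, and you leave it as an expectation. None of your mechanisms estimates the original eigenvalue. With $h'=he^{-\psi}$ and $\beta=e^{-\psi}\alpha$, the Rayleigh quotient becomes $\int_M e^{\psi}\abs{\ddbar_h^*\beta}^2\,dV_g\big/\int_M e^{\psi}\abs{\beta}^2\,dV_g$. Under $g\mapsto\rho g$, the $L^2$ norm of $(0,1)$-forms is unchanged, but $\norm{\ddbar^*\alpha}^2$ acquires the weight $\rho^{-1}$. The twisted Bochner formula likewise controls only a weighted quadratic form. Returning to the unweighted quotient costs a factor $e^{-\operatorname{osc}\psi}$ (resp.\ $\min\rho/\max\rho$). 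Since $\psi$ (resp.\ $\log\rho$) must absorb the entire nonconstant part of $\theta_{E,h,g}+K_g$, this loss is not small, and no choice of eigenform removes it.

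\textbf{Counterexample.} Take $M=\mathbb{CP}^1$ and $E=\mathcal O$ with the constant metric. Then $\theta_{E,h,g}\equiv0$ and the right-hand side is $4\pi/\Vol_g(M)$. Since $H^{0,1}(\mathbb{CP}^1)=0$, $\ddbar^*$ is injective on $(0,1)$-forms. Hence $\lambda_{\min}(\ddbar\ddbar^*)$ equals the first nonzero eigenvalue of $\ddbar^*\ddbar=\frac12\Delta_g$ on functions; this normalization gives equality for the round metric. The statement would therefore assert $\lambda_1(\Delta_g)\Vol_g(S^2)\ge 8\pi$ for every metric on $S^2$. Hersch's inequality gives $\le 8\pi$, with equality only for round metrics.

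Independently of normalizations, $\lambda_1\Vol$ can be made arbitrarily small in the conformal class, because the Dirichlet energy is conformally invariant. Consider the function equal to $-1$ on $\{\abs{z}\le\epsilon\}$, to $1$ on $\{\abs{z}\ge1\}$, and to $-1+2\log(\abs{z}/\epsilon)/\log(1/\epsilon)$ in between. Its energy is $8\pi/\log(1/\epsilon)$, and one places almost all the area on the two caps.

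The same concentration applied to $E=\mathcal O(-1)$, using the conformal invariance of $\norm{\ddbar s}_{L^2}^2$, would force $\sup_M\abs{s}_h^2\le\frac{1}{2\pi}\norm{\ddbar s}_{L^2}^2$. This contradicts $W^{1,2}\not\hookrightarrow L^\infty$.

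\textbf{Consequences.} Only your Step 1 pointwise bound holds unconditionally. An averaged bound requires a restriction on $g$; for instance, it is immediate from Step 1 when $\theta_{E,h,g}+K_g$ is constant. The paper gives no argument of its own, only the citation to Ji--Zhu with the remark that weighted $L^2$ estimates are used, so you should recover the exact hypotheses from the source. Note also that the proof of \Cref{thm:L2criterion} applies this statement to $\widehat g=\rho g$ with $\rho$ far from constant, which is precisely the regime where it breaks down.
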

This proposition is established via weighted $L^2$ estimates. Such techniques play a central role in complex and algebraic geometry. For landmark developments, we refer to Siu's foundational work on the invariance of plurigenera and the extension of twisted pluricanonical sections \cite{SiuPlurigenera,SiuPlurigenera2} and Guan--Zhou's resolution of the strong openness conjecture \cite{GuanZhou}.

We next recall how real-linear Cauchy--Riemann operators arise naturally in the theory of pseudoholomorphic curves. Let $(X,J)$ be an almost complex manifold. A map
\begin{align*}
	u:(M,j)\longrightarrow(X,J)
\end{align*}
is \emph{pseudoholomorphic} (or \emph{$J$-holomorphic}) if
\begin{align*}
	\ddbar_Ju
	:=\frac12\bigl(
	{\rm d}u+J\circ{\rm d}u\circ j
	\bigr)=0.
\end{align*}
The linearized operator has the form
\begin{align*}
	D_u=\ddbar_u+A_u,
\end{align*}
where $\ddbar_u$ is a complex-linear Cauchy--Riemann operator on the vector bundle $(u^*TX,J)$ and $A_u$ is an anti-complex-linear bundle homomorphism. For later use, we record a connection description of the complex-linear part. Choose a connection $\nabla$ on $TX$ satisfying $\nabla J=0$, and denote its torsion by $T$. For $\xi\in C^\infty(M,u^*TX)$ and $Y\in\Gamma(TM)$, set $U={\rm d}u(Y)$. If $u_t$ is a variation of $u$ with variational vector field $\xi$ and $Y$ is extended independently of $t$, then the torsion identity gives
\begin{align*}
	\left.\nabla_{\partial_t}{\rm d}u_t(Y)\right|_{t=0}=\nabla_Y\xi+T(\xi,U).
\end{align*}
Since $\nabla J=0$ and ${\rm d}u(jY)=JU$, differentiating
\begin{align*}
	\ddbar_Ju_t(Y)=\frac12\bigl({\rm d}u_t(Y)+J{\rm d}u_t(jY)\bigr)
\end{align*}
at $t=0$ yields
\begin{align}\label{eq:linearization-J-connection}
	D_u\xi(Y)
	=
	\frac12\bigl(\nabla_Y\xi+J\nabla_{jY}\xi\bigr)
	+
	\frac12\bigl(T(\xi,U)+JT(\xi,JU)\bigr).
\end{align}
In particular, if the second term in \eqref{eq:linearization-J-connection} is anti-complex-linear in $\xi$, then
\begin{align*}
	\ddbar_u=(\nabla)^{0,1},
\end{align*}
where $\nabla^{0,1}$ denotes the $(0,1)$-part of the induced connection on $u^*TX$ with respect to $j$. This is analogous to the standard connection formulations of the linearized Cauchy--Riemann operator in the $\omega$-compatible and $\omega$-tame settings
(see \cite[Remark 3.1.3]{MS12}). When $J$ is integrable, $D_u$ is complex-linear, and hence $A_u=0$. The curve $u$ is Fredholm regular if $D_u$ is surjective.

If $u$ is immersed, then ${\rm d}u(TM)$ is a complex line subbundle of $u^*TX$, and the normal bundle
\begin{align*}
	N_u:=u^*TX/{\rm d}u(TM)
\end{align*}
is a smooth complex vector bundle. The linearization induces a normal real-linear Cauchy--Riemann operator
\begin{align*}
	D_u^N:C^\infty(M,N_u)\longrightarrow\Omega^{0,1}(N_u).
\end{align*}
Let $\pi_N:u^*TX\longrightarrow N_u$ denote the quotient map. The normal operator is characterized by
\begin{align*}
	D_u^N\pi_N=(\operatorname{id}\otimes\pi_N)D_u.
\end{align*}
Taking the complex-linear part gives
\begin{align}\label{eq:normal-dbar-quotient}
	\ddbar_u^N\pi_N=(\operatorname{id}\otimes\pi_N)\ddbar_u.
\end{align}
Fredholm regularity is equivalent to the surjectivity of the normal Cauchy--Riemann operator $D_u^N$ (see Wendl \cite[Proposition 2.2]{WendlSuperRigidity}). In particular, $N_u$ is a holomorphic line bundle if $\dim_\R X=4$, which is the key feature underlying the classical automatic transversality criterion.

\subsection{Harder--Narasimhan filtrations}
The degree of a holomorphic vector bundle $E$ is
\begin{equation*}
 \deg E
 :=\frac{1}{2\pi}\int_M
 \tr\bigl(\ii\Lambda_gR^{E,h}\bigr)\,dV_g.
\end{equation*}
The slope of $E$ is
\begin{align*}
 \mu(E):=\frac{\deg E}{\rank E}.
\end{align*}
A holomorphic vector bundle is \emph{semistable} if every nonzero proper holomorphic subbundle $S\subset E$ satisfies $\mu(S)\le\mu(E)$ (see \cite{HuybrechtsLehn}).

We first recall the following well-known result for holomorphic vector bundles over closed Riemann surfaces.
\begin{theorem}[\cite{HN}]
Every holomorphic vector bundle $E$ over a closed Riemann surface $M$ admits a unique filtration by holomorphic subbundles
	\begin{equation}\label{eq:HNfiltration}
		0=E_0\subset E_1\subset\cdots\subset E_\ell=E
	\end{equation}
whose quotients
	\begin{align*}
		Q_i:=E_i/E_{i-1}
	\end{align*}
are semistable and satisfy
	\begin{equation*}
		\mu(Q_1)>\mu(Q_2)>\cdots>\mu(Q_\ell).
	\end{equation*}
\end{theorem}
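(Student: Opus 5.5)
This is the classical Harder--Narasimhan theorem, which the paper cites from \cite{HN}. The plan is the standard argument: prove existence by building a maximal destabilizing subbundle inductively, then prove uniqueness from a slope inequality.

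The first step is to show that slopes of holomorphic subbundles of $E$ are bounded above. Every nonzero subbundle $S\subset E$ of rank $r$ gives a line subbundle $\det S\subset\Lambda^rE$, and line subbundles of a fixed bundle have bounded degree. To see this, fix a Hermitian metric; by the Chern--Weil formula and the curvature decreasing property of subbundles (the second fundamental form term is nonpositive),
\begin{align*}
\deg L\le\frac{1}{2\pi}\int_M\lambda_{\max}\bigl(\ii\Lambda_gR^{\Lambda^rE,h}\bigr)\,dV_g .
\end{align*}
Hence $\mu_{\max}:=\sup\{\mu(S)\}$ is finite and is attained, since degrees are integers and ranks are bounded. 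Among the subbundles attaining $\mu_{\max}$, choose $E_1$ of maximal rank.

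On a curve, every coherent subsheaf of a locally free sheaf is locally free, and its saturation is a subbundle of no smaller degree. So it suffices to work with subbundles. Two properties of $E_1$ follow:
\begin{itemize}
\item $E_1$ is semistable, because any subbundle of $E_1$ is a subbundle of $E$.
\item If $S,S'$ both have slope $\mu_{\max}$, then the exact sequence $0\to S\cap S'\to S\oplus S'\to S+S'\to0$ forces $S+S'$ (after saturation) to have slope $\mu_{\max}$ as well. Hence $E_1$ contains every subbundle of maximal slope, so it is unique.
\end{itemize}
Next I apply induction on rank to $E/E_1$ and pull back its filtration. This gives the filtration \eqref{eq:HNfiltration}. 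It remains to check the strict inequality $\mu(Q_1)>\mu(Q_2)$. If $F\subset E/E_1$ has $\mu(F)\ge\mu(E_1)$, its preimage $\tilde F$ satisfies $\mu(\tilde F)\ge\mu_{\max}$ and has rank larger than $E_1$, which contradicts the choice of $E_1$. Therefore $\mu_{\max}(E/E_1)<\mu(E_1)$.

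Uniqueness follows from a lemma: if $F,G$ are semistable with $\mu(F)>\mu(G)$, then $\operatorname{Hom}(F,G)=0$. To prove it, take the image of a nonzero map $\varphi:F\to G$. Semistability of $F$ gives $\mu(\operatorname{im}\varphi)\ge\mu(F)$, while the saturation of the image in $G$ gives $\mu(\operatorname{im}\varphi)\le\mu(G)$, a contradiction.

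Now take another filtration $E'_\bullet$ with the stated properties, and let $j$ be the largest index with $E_1\not\subset E'_{j-1}$. Then $E_1\to Q'_j$ is a nonzero map. Comparing slopes gives $\mu(Q'_j)\ge\mu(E_1)=\mu_{\max}$. Since the $\mu(Q'_i)$ are strictly decreasing and $\mu(Q'_1)\le\mu_{\max}$, this forces $j=1$. Thus $E_1\subseteq E'_1$, and by symmetry $E_1=E'_1$. Induction on $E/E_1$ finishes the proof.

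I expect the main obstacle to be the boundedness of $\deg$ over line subbundles, together with the saturation technicalities. The plan handles the first with the Chern curvature decreasing principle and the second by the standard fact that torsion-free sheaves on curves are locally free.
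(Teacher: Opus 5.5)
The paper gives no proof of this statement. It quotes the theorem from \cite{HN} and uses it as a black box; only the quotient characterization in \Cref{prop:quotient-mumin} is proved. Your argument is the standard maximal-destabilizing-subbundle proof, and it is correct.

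\begin{itemize}
\item The degree bound via $\det S\subset\Lambda^rE$ and the curvature-decreasing property of subbundles is the subbundle counterpart of the quotient curvature formula used in \Cref{lem:HN-upper}. It fits naturally with the paper's methods.
\item The existence step is sound: choose maximal slope, then maximal rank among those, and get the strict slope drop by pulling subbundles of $E/E_1$ back to $E$.
\item The uniqueness step is sound: $\operatorname{Hom}(F,G)=0$ for semistable $F,G$ with $\mu(F)>\mu(G)$. This is the same mechanism as the paper's filtration by intersections $S_i=Q^*\cap H_i$ in \Cref{prop:quotient-mumin}.
\end{itemize}

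One step needs an extra line: ``by symmetry $E_1=E'_1$''. The argument giving $E_1\subseteq E'_1$ used $\mu(E_1)=\mu_{\max}$, and this is not known a priori for the first term $E'_1$ of an arbitrary filtration. It follows at once, though. Since $E'_1$ is semistable and contains the subbundle $E_1$,
\begin{align*}
\mu_{\max}=\mu(E_1)\le\mu(E'_1)\le\mu_{\max},
\end{align*}
so $E'_1$ has maximal slope. The choice of $E_1$ then gives $\rank E'_1\le\rank E_1$. Two nested subbundles of equal rank coincide, because $E'_1/E_1$ is torsion and embeds in the locally free sheaf $E/E_1$. Alternatively, invoke your bullet showing that $E_1$ contains every subbundle of slope $\mu_{\max}$.

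With that sentence added, induction on $E/E_1$ completes the uniqueness argument.
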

The filtration \eqref{eq:HNfiltration} is referred to as the \emph{Harder--Narasimhan filtration}. Set
\begin{equation}\label{eq:mumin}
 \mu_{\min}^{\HN}(E):=\mu(Q_\ell),
\end{equation}
and the minimal Harder--Narasimhan slope admits a standard quotient characterization as follows.

\begin{proposition}\label{prop:quotient-mumin}
For every holomorphic vector bundle $E$ over a closed Riemann surface,
\begin{equation*}
 \mu_{\min}^{\HN}(E)
 =\min_{E\twoheadrightarrow Q\ne0}\mu(Q),
\end{equation*}
where the minimum runs over all nonzero holomorphic quotient bundles $Q$ of $E$, and is attained by the final Harder--Narasimhan quotient in \eqref{eq:HNfiltration}.
\end{proposition}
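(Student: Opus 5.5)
We need to show two things: (a) every nonzero holomorphic quotient $Q$ of $E$ satisfies $\mu(Q)\ge\mu(Q_\ell)$, and (b) the bound is attained, which is clear because $E\twoheadrightarrow E/E_{\ell-1}=Q_\ell$ is a nonzero quotient with slope $\mu_{\min}^{\HN}(E)$. So the content lies in (a). We argue by induction on the length $\ell$ of the Harder--Narasimhan filtration, using only two standard facts. The first is that slopes behave additively on short exact sequences: the degree is additive and the rank is additive, so the slope of a middle term is a weighted average of the slopes of the outer terms. The second is that on a curve the saturation of a subsheaf, or the image of a map of bundles, has degree at least that of the subsheaf.

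First we prove a key semistability fact: if $F$ is semistable and $F\to Q$ is a nonzero morphism to a bundle $Q$, then $\mu(\operatorname{im})\ge\mu(F)$. Let $K$ be the kernel, which is a subbundle since it is the kernel of a map of locally free sheaves on a curve. Then $\mu(K)\le\mu(F)$ when $K\neq0$, and the averaging identity gives $\mu(F/K)\ge\mu(F)$. The image sheaf $F/K$ is torsion free, hence locally free. Replacing it by its saturation $I\subset Q$ only increases the degree, so
\[
\mu(I)\ge\mu(F/K)\ge\mu(F).
\]

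Now let $q:E\twoheadrightarrow Q$ with $Q\ne0$, and consider the restriction $q|_{E_{\ell-1}}$. There are two cases.

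\emph{Case 1: $q|_{E_{\ell-1}}=0$.} Then $q$ factors through $Q_\ell$, and $Q$ is a quotient of the semistable bundle $Q_\ell$. Semistability, applied to the kernel, gives $\mu(Q)\ge\mu(Q_\ell)$ by the averaging argument above.

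\emph{Case 2: $q|_{E_{\ell-1}}\neq0$.} Let $I\subset Q$ be the saturation of $q(E_{\ell-1})$. The sheaf $q(E_{\ell-1})$ is a nonzero quotient sheaf of $E_{\ell-1}$. Its slope is at least the slope of its locally free quotient, namely itself, since it is torsion free. Its HN filtration is $E_1\subset\cdots\subset E_{\ell-1}$, so by induction (the inductive statement applied to quotient sheaves, which are torsion free and hence bundles) we get
\[
\mu(I)\ge\mu(q(E_{\ell-1}))\ge\mu(Q_{\ell-1})>\mu(Q_\ell).
\]
The quotient $Q/I$, if nonzero, is a quotient bundle of $E/E_{\ell-1}=Q_\ell$, because $q$ induces a surjection $Q_\ell\twoheadrightarrow Q/q(E_{\ell-1})\twoheadrightarrow Q/I$ modulo torsion. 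By Case 1 this gives $\mu(Q/I)\ge\mu(Q_\ell)$. Averaging over $0\to I\to Q\to Q/I\to0$ then yields $\mu(Q)\ge\mu(Q_\ell)$. The base case $\ell=1$ is exactly Case 1.

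The main obstacle is bookkeeping with subsheaves versus subbundles: image sheaves need not be saturated, and quotients by non-saturated images can have torsion. We handle this throughout by passing to saturations, which only raise the degree of the subobject and correspondingly lower the degree of the quotient. In Case 2 the second effect is harmless, because $Q/I$ is still a genuine quotient bundle of $Q_\ell$, to which Case 1 applies.
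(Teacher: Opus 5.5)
Your proof is correct, but it takes a different route from the paper's.

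\textbf{The paper's route.} The paper dualizes. It regards $Q^*\subset E^*$ and intersects $Q^*$ with the Harder--Narasimhan filtration of $E^*$. Each graded piece is bounded, via its saturation in the semistable factor $H_i/H_{i-1}$, by $\mu_{\max}^{\HN}(E^*)$. The proof then invokes $\mu_{\max}^{\HN}(E^*)=-\mu_{\min}^{\HN}(E)$.

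\textbf{Your route.} You stay on the quotient side and induct on the length $\ell$.
\begin{itemize}
\item The saturated image $I$ of $E_{\ell-1}$ in $Q$ is controlled by the inductive hypothesis. This is legitimate: $q(E_{\ell-1})\subset Q$ is torsion free, so it is a genuine quotient bundle of $E_{\ell-1}$. By uniqueness, the Harder--Narasimhan filtration of $E_{\ell-1}$ is $E_1\subset\cdots\subset E_{\ell-1}$.
\item The quotient $Q/I$ is a quotient bundle of the semistable bundle $Q_\ell$.
\item Additivity of degree then finishes the argument.
\end{itemize}

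\textbf{What your approach buys.} It is self-contained. You use only the subbundle definition of semistability, additivity of degree, and the fact that saturation raises degree. The paper's final step instead relies on the standard fact that the Harder--Narasimhan filtration of $E^*$ is the dual filtration. That fact needs duals of semistable bundles to be semistable, which is essentially the case $\ell=1$ of the proposition; your Case 1 proves it directly.

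Your argument also gives a small refinement for free. In Case 2 the inequality is strict, so any quotient attaining $\mu_{\min}^{\HN}(E)$ must kill $E_{\ell-1}$, i.e.\ factor through $Q_\ell$.

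\textbf{What the paper's approach buys.} It is non-inductive. It also yields the dual statement at the same time: every subsheaf of $E^*$ has slope at most $\mu_{\max}^{\HN}(E^*)$.
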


\begin{proof}
This is a consequence of the Harder--Narasimhan filtration
(see, e.g., \cite[Example 3.20]{FuMu}); we include the proof for completeness. The final Harder--Narasimhan quotient
$Q_\ell=E/E_{\ell-1}$ is a nonzero holomorphic quotient of $E$ and satisfies $\mu(Q_\ell)=\mu_{\min}^{\HN}(E)$. It therefore remains to show that every nonzero holomorphic quotient $Q$ of $E$ satisfies $\mu(Q)\geq\mu_{\min}^{\HN}(E)$.

Now let $E\twoheadrightarrow Q$ be a nonzero quotient. Dualizing gives an injection $Q^*\hookrightarrow E^*$. Consider the Harder--Narasimhan filtration of $E^*$
	\begin{align*}
		0=H_0\subset H_1\subset\cdots\subset H_r=E^*.
	\end{align*}
Then each $H_i/H_{i-1}$ is semistable and
	\begin{align*}
		\mu_{\max}^{\HN}(E^*):=\mu(H_1)>\mu(H_2/H_{1})>\cdots>\mu(H_r/H_{r-1}).
	\end{align*}

The inclusion $Q^*\subset E^*$ induces a filtration by coherent subsheaves
	\begin{align*}
		0=S_0\subset S_1\subset\cdots\subset S_r=Q^*,
		\quad
		S_i:=Q^*\cap H_i.
	\end{align*}
For every $i$, the natural map
	\begin{align*}
		S_i/S_{i-1}\longrightarrow H_i/H_{i-1}
	\end{align*}
	is injective. Thus every nonzero $S_i/S_{i-1}$ is a torsion-free coherent subsheaf of the semistable bundle $H_i/H_{i-1}$. Let $\widetilde S_i$ denote its
	saturation in $H_i/H_{i-1}$. Then $\widetilde S_i\subseteq H_i/H_{i-1}$ forms a holomorphic subbundle with the same rank as $S_i/S_{i-1}$, and
	\begin{align*}
		\deg(S_i/S_{i-1})\leq \deg \widetilde S_i.
	\end{align*}
	By semistability,
	\begin{align*}
		\mu(S_i/S_{i-1})\leq\mu(\widetilde S_i)\leq\mu(H_i/H_{i-1})\leq\mu_{\max}^{\HN}(E^*).
	\end{align*}
	Consequently,
	\begin{align*}
		\deg Q^*=\sum_i \deg(S_i/S_{i-1})\leq
		\sum_i
		\mu_{\max}^{\HN}(E^*)\rank(S_i/S_{i-1})=
		\mu_{\max}^{\HN}(E^*)\rank Q^*.
	\end{align*}
Dividing by $\rank Q^*$ gives
	\begin{align*}
		\mu(Q^*)
		\leq
		\mu_{\max}^{\HN}(E^*).
	\end{align*}

Finally, the Harder--Narasimhan filtration of the dual bundle reverses the order of the slopes and changes their signs. Thus,
	\begin{align*}
		\mu_{\max}^{\HN}(E^*)
		=
		-\mu_{\min}^{\HN}(E),\qquad \mu(Q^*)=-\mu(Q),
	\end{align*}
and hence
	\begin{align*}
		\mu(Q)\geq\mu_{\min}^{\HN}(E).
	\end{align*}
Since $Q$ was arbitrary, every nonzero holomorphic quotient of $E$
has slope at least $\mu_{\min}^{\HN}(E)$. Since equality is attained
by the final Harder--Narasimhan quotient $Q_\ell$, the minimum exists
and is equal to $\mu_{\min}^{\HN}(E)$.
\end{proof}

\section{A conformal \texorpdfstring{$L^2$}{L2} surjectivity criterion}
In this section, we prove \Cref{thm:intro-L2} and illustrate it with an example. The key ingredient is a nonconstant conformal change of the metric on $M$.

\begin{lemma}\label{lem:conformal}
Let $\widehat g=\rho g$ for a smooth positive function $\rho$. Then
\begin{align}
 dV_{\widehat g}&=\rho\,dV_g,\label{eq:conf-volume}\\
 \Lambda_{\widehat g}&=\rho^{-1}\Lambda_g,\label{eq:conf-Lambda}\\
 \theta_{E,h,\widehat g}&=\rho^{-1}\theta_{E,h,g},\label{eq:conf-theta}\\
 \abs{A}_{\widehat g,h}^2&=\rho^{-1}\abs{A}_{g,h}^2.\label{eq:conf-A}
\end{align}
Consequently,
\begin{align*}
 \int_M\theta_{E,h,\widehat g}\,dV_{\widehat g}
 &=\int_M\theta_{E,h,g}\,dV_g,\\
 \int_M\abs{A}_{\widehat g,h}^2\,dV_{\widehat g}
 &=\int_M\abs{A}_{g,h}^2\,dV_g.
\end{align*}
\end{lemma}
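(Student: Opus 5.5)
The plan is a pointwise linear-algebra check of each identity in a local holomorphic coordinate, after which the two integral identities follow by multiplying and cancelling the powers of $\rho$. Fix a local coordinate $z=x+\ii y$ and write $g=\sigma\,(dx^2+dy^2)$, so that the Kähler form is $\omega_g=\tfrac{\ii}{2}\sigma\,dz\wedge d\bar z$ and $dV_g=\sigma\,dx\wedge dy$. Replacing $g$ by $\widehat g=\rho g$ replaces $\sigma$ by $\rho\sigma$. Hence $\omega_{\widehat g}=\rho\,\omega_g$, which gives \eqref{eq:conf-volume} because on a surface $dV_g=\omega_g$.

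For \eqref{eq:conf-Lambda}, I will use that on a real two-dimensional manifold the dual Lefschetz operator on $2$-forms is characterized by $\Lambda_g(f\,\omega_g)=f$. Equivalently, $\Lambda_g\alpha=\alpha/\omega_g$, the ratio of top forms. Since $\omega_{\widehat g}=\rho\,\omega_g$, we get $\Lambda_{\widehat g}\alpha=\rho^{-1}\Lambda_g\alpha$.

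The Chern connection and its curvature $R^{E,h}$ depend only on $(E,\ddbar_E,h)$, not on $g$. Therefore $\ii\Lambda_{\widehat g}R^{E,h}=\rho^{-1}\,\ii\Lambda_gR^{E,h}$ as $h$-Hermitian endomorphisms. Multiplying by the positive scalar $\rho^{-1}$ scales every eigenvalue, in particular the smallest, which gives \eqref{eq:conf-theta}.

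For \eqref{eq:conf-A}, I compute the induced metric on $\Lambda^{0,1}T^*M$. The covector $d\bar z$ has $g^*$-norm squared proportional to $\sigma^{-1}$, so $|\cdot|_{\widehat g^*}^2=\rho^{-1}|\cdot|_{g^*}^2$ on $\Lambda^{0,1}T^*M$. Tensoring with the unchanged $h$ gives the same factor on $\Lambda^{0,1}T^*M\otimes E$, while the source norm on $E_x$ is unchanged. Taking the supremum over unit vectors in $(E_x,h_x)$ then gives $\abs{A}^2_{\widehat g,h}=\rho^{-1}\abs{A}^2_{g,h}$.

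The integral identities follow by combining \eqref{eq:conf-volume} with \eqref{eq:conf-theta} and with \eqref{eq:conf-A}: in each case the factors $\rho$ and $\rho^{-1}$ cancel pointwise.

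The only delicate point is keeping normalization conventions consistent, namely the constant relating $\omega_g$, $dV_g$ and the norm of $d\bar z$. However, every identity is a ratio of the $\widehat g$- and $g$-quantities, so these constants cancel and only the homogeneity in $\sigma$ matters. That homogeneity is $\omega\sim\sigma$, $\Lambda\sim\sigma^{-1}$, and $|d\bar z|^2\sim\sigma^{-1}$. I expect no real obstacle beyond confirming that the operator norm on $\Lambda^{0,1}\otimes E$ scales uniformly, which holds because the scaling is by a scalar.
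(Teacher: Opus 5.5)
Your proposal is correct and follows essentially the same route as the paper. Both arguments check pointwise how the volume form, the contraction $\Lambda_g$, the $g$-independent Chern curvature (hence $\theta$), and the norm on $\Lambda^{0,1}T^*M$ scale, and then cancel $\rho$ against $\rho^{-1}$ in the integrands. The only difference is that you make these homogeneities explicit in a local conformal coordinate.
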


\begin{proof}
Since $M$ has real dimension two, scaling the metric by $\rho$ scales the volume form by $\rho$, which proves \eqref{eq:conf-volume}. The inverse metric scales by $\rho^{-1}$, hence the contraction of a $2$-form satisfies \eqref{eq:conf-Lambda}. The Chern connection and its curvature depend only on $(\ddbar_E,h)$, not on the base metric, so \eqref{eq:conf-Lambda} implies \eqref{eq:conf-theta}. Finally, the norm of a $1$-form scales by $\rho^{-1/2}$, while the metric on the $E$ factor is unchanged, which gives \eqref{eq:conf-A}. Multiplication by \eqref{eq:conf-volume} implies the integral identities.
\end{proof}

To prove \Cref{thm:intro-L2}, we need the following elementary smooth approximation from above.
\begin{lemma}\label{lem:smooth-majorant}
Let $\phi\ge0$ be a continuous function on a closed Riemannian manifold and let $C$ satisfy
	\begin{align*}
		\int_M\phi\,dV_g<C.
	\end{align*}
Then there exists a smooth positive function $\rho$ such that
	\begin{equation}\label{eq:rho-properties}
		\phi<\rho\quad\text{on }M,
		\qquad
		\int_M\rho\,dV_g<C.
	\end{equation}
\end{lemma}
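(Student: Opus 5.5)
The plan is to take a constant lift of $\phi$ and then smooth it, arranging the volume budget in advance. Put $V=\Vol_g(M)$ and $\delta:=C-\int_M\phi\,dV_g>0$. Set $\varepsilon:=\delta/(3V)$ and define
\begin{align*}
\psi:=\phi+2\varepsilon .
\end{align*}
Then $\psi$ is continuous, $\psi\ge2\varepsilon>0$, and
\begin{align*}
\int_M\psi\,dV_g=\int_M\phi\,dV_g+\tfrac{2}{3}\delta .
\end{align*}

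Next I would smooth $\psi$ uniformly. Since $M$ is compact, the Stone--Weierstrass theorem, or equivalently mollification in finitely many charts glued by a partition of unity, provides a smooth function $\rho$ with
\begin{align*}
\sup_M\abs{\rho-\psi}<\varepsilon .
\end{align*}
The required properties then follow directly. First, $\rho>\psi-\varepsilon=\phi+\varepsilon>\phi\ge0$, so $\rho$ is positive and strictly dominates $\phi$ at every point. Second,
\begin{align*}
\int_M\rho\,dV_g<\int_M\psi\,dV_g+\varepsilon V=\int_M\phi\,dV_g+\delta=C,
\end{align*}
which is exactly \eqref{eq:rho-properties}.

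The only step that needs any care is the uniform smooth approximation of a continuous function on a closed manifold. This is standard: density of $C^\infty(M)$ in $C^0(M)$ in the sup norm. The one point to respect is the order of choices: fix $\varepsilon$ from the integral gap $\delta$ before approximating. That ordering is what gives both the strict pointwise inequality and the strict integral inequality at the same time. I expect no further obstacle.
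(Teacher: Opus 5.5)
Your proof is correct and follows the paper's argument. The paper takes a smooth $\phi_0$ with $\max_M\lvert\phi_0-\phi\rvert<\delta$, where $\delta=(C-\int_M\phi\,dV_g)/(4\Vol_g(M))$, and sets $\rho=\phi_0+2\delta$; you instead add the constant before smoothing and split the budget into thirds rather than quarters, which makes no difference to the argument.
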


\begin{proof}
Set
	\begin{align*}
		\delta:=\frac{C-\int_M\phi\,dV_g}{4\Vol_g(M)}>0.
	\end{align*}
Choose a smooth function $\phi_0$ with $\max_{x\in M}\abs{\phi_0-\phi}<\delta$, and set $\rho=\phi_0+2\delta$. Then
	\begin{align*}
		\rho>\phi+\delta>\phi,
	\end{align*}
and
	\begin{align*}
		\int_M\rho\,dV_g
		\le\int_M\phi\,dV_g+3\delta\Vol_g(M)
		<C.
	\end{align*}
\end{proof}

\begin{theorem}[=\Cref{thm:intro-L2}]\label{thm:L2criterion}
Let $D=\ddbar_E+A$ be a real-linear Cauchy--Riemann operator on a holomorphic vector bundle $E$ over a closed Riemann surface $M$. Suppose that for some Hermitian metric $h$ on $E$,
\begin{equation*}
 \int_M\theta_{E,h,g}\,dV_g+2\pi\chi(M)
 >\int_M\abs{A}_{g,h}^2\,dV_g.
\end{equation*}
Then $D$ is surjective.
\end{theorem}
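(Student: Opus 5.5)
Surjectivity of $D$ is equivalent to injectivity of the formal adjoint $D^*=\ddbar_E^*+A^*$ on $\Omega^{0,1}(E)$, since $D$ is Fredholm and its cokernel is represented by smooth sections of $\ker D^*$. So we will show that every smooth $\eta$ with $D^*\eta=0$ vanishes. The inequality in \Cref{thm:smallest-eigenvalue} is sharp only in an averaged sense, whereas $A$ enters pointwise. We therefore first use \Cref{lem:conformal} to change $g$ conformally so that the pointwise size of $A$ is dominated by a constant compatible with the eigenvalue bound.

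\emph{Step 1: choice of conformal factor.} Set $\phi:=\abs{A}_{g,h}^2\ge0$, which is continuous, and
\begin{align*}
C:=\int_M\theta_{E,h,g}\,dV_g+2\pi\chi(M).
\end{align*}
By hypothesis $\int_M\phi\,dV_g<C$, so \Cref{lem:smooth-majorant} gives a smooth positive $\rho$ with $\phi<\rho$ on $M$ and $\int_M\rho\,dV_g<C$. Put $\widehat g=\rho g$. By \Cref{lem:conformal},
\begin{align*}
\abs{A}_{\widehat g,h}^2=\rho^{-1}\phi<1 \quad\text{pointwise},\qquad \Vol_{\widehat g}(M)=\int_M\rho\,dV_g<C,
\end{align*}
and the quantity $\int_M\theta_{E,h,\widehat g}\,dV_{\widehat g}+2\pi\chi(M)=C$ is unchanged. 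Applying \Cref{thm:smallest-eigenvalue} for $\widehat g$ then gives
\begin{align*}
\lambda_{\min}(\ddbar_E\ddbar_E^*)\ge C/\Vol_{\widehat g}(M)>1.
\end{align*}
Changing the base metric does not affect surjectivity of $D$, so we may work with $\widehat g$ throughout.

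\emph{Step 2: the estimate.} With all norms taken for $(\widehat g,h)$, let $\eta$ satisfy $\ddbar_E^*\eta=-A^*\eta$. Then
\begin{align*}
\norm{\ddbar_E^*\eta}^2\ge\lambda\norm{\eta}^2 \quad\text{with }\lambda>1.
\end{align*}
On the other hand, $\norm{A^*\eta}_{L^2}\le\sup\abs{A}\,\norm{\eta}$, where $\sup\abs{A}<1$ by compactness and the strict pointwise bound. Hence $\lambda\norm{\eta}^2\le\norm{A^*\eta}^2<\norm{\eta}^2$ unless $\eta=0$. This forces $\eta=0$, so $D$ is surjective.

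\emph{Expected obstacle.} The main subtlety is the real-linearity of $A$. The adjoint $A^*$ must be taken with respect to the real part of the $L^2$ inner product, and the operator norm of $A^*$ at each point must equal the real operator norm $\abs{A}$ defined in the preliminaries. This holds because the fibrewise real adjoint has the same operator norm. We also need the eigenvalue bound, stated for the complex operator $\ddbar_E\ddbar_E^*$, to give $\norm{\ddbar_E^*\eta}^2\ge\lambda\norm{\eta}^2$ for every $\eta$, so that it can be used for the real inner product. This follows because $\operatorname{Re}\langle\cdot,\cdot\rangle$ gives the same norms, and $\ddbar_E^*$ is the complex adjoint, which coincides with the real adjoint of a complex-linear operator. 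I would verify these two points carefully. The rest follows directly from the conformal invariance in \Cref{lem:conformal}.
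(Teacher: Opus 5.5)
Your proposal is correct and takes essentially the same route as the paper. You choose $\rho$ via \Cref{lem:smooth-majorant} so that $\abs{A}_{\widehat g,h}<1$ pointwise and $\Vol_{\widehat g}(M)<C$, then play the eigenvalue bound of \Cref{thm:smallest-eigenvalue} for $\widehat g$ against the uniform bound on $A^{*,\widehat g}$ to get $\ker D^{*,\widehat g}=0$. The only cosmetic difference is that you work with squared norms from $\ddbar_E^*\eta=-A^*\eta$, whereas the paper uses the triangle inequality to obtain $\norm{D^{*,\widehat g}\eta}\ge(1-a_\rho)\norm{\eta}$.
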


\begin{proof}
Set
\begin{align*}
 C:=\int_M\theta_{E,h,g}\,dV_g+2\pi\chi(M),
 \quad
 \phi:=\abs{A}_{g,h}^2.
\end{align*}
The assumption gives $C>\int_M\phi\,dV_g\ge0$. By \Cref{lem:smooth-majorant}, choose a smooth positive $\rho$ satisfying \eqref{eq:rho-properties}, and put
\begin{align*}
 \widehat g:=\rho g,
 \qquad
 V_\rho:=\Vol_{\widehat g}(M)=\int_M\rho\,dV_g.
\end{align*}
By \Cref{lem:conformal}, the integral $\int_M\theta_{E,h,g}\,dV_g$ is invariant under the conformal change. Since $\chi(M)$ is topological, we still have
\begin{align*}
	\int_M\theta_{E,h,\widehat g}\,dV_{\widehat g}
	+2\pi\chi(M)
	=C.
\end{align*}
\Cref{thm:smallest-eigenvalue}, applied with the metric $\widehat g$, gives
\begin{equation}\label{eq:barlower}
 \norm{\ddbar_E^{*,\widehat g}\eta}_{L^2(\widehat g,h)}
 \ge \sqrt{\frac{C}{V_\rho}}\,
 \norm{\eta}_{L^2(\widehat g,h)}>\norm{\eta}_{L^2(\widehat g,h)}
\end{equation}
for every $\eta\in\Omega^{0,1}(E)$, since $V_\rho<C$.

By \eqref{eq:conf-A} and \eqref{eq:rho-properties},
\begin{align*}
 \abs{A}_{\widehat g,h}^2=\frac{\phi}{\rho}<1.
\end{align*}
Compactness of $M$ yields
\begin{align*}
 a_\rho:=\sup_M\abs{A}_{\widehat g,h}<1.
\end{align*}
The pointwise norm of the adjoint of a finite-dimensional linear map equals the norm of the map. Therefore
\begin{equation}\label{eq:Alower}
\norm{A^{*,\widehat g}\eta}_{L^2(\widehat g,h)}
\le
a_\rho
\norm{\eta}_{L^2(\widehat g,h)},
\end{equation}
where $A^{*,\widehat g}$ is the adjoint map of $A$ with respect to $(\widehat{g},h)$. Combining \eqref{eq:barlower} and \eqref{eq:Alower},
\begin{align*}
 \norm{D^{*,\widehat g}\eta}_{L^2}
 &\ge
 \norm{\ddbar_E^{*,\widehat g}\eta}_{L^2}
 -\norm{A^{*,\widehat g}\eta}_{L^2}\\
 &\ge
 \left(\sqrt{\frac{C}{V_\rho}}-a_\rho\right)
 \norm{\eta}_{L^2}\\
 &>
 \left(1-a_\rho\right)
 \norm{\eta}_{L^2},
\end{align*}
where $1-a_\rho>0$. Thus $\ker D^{*,\widehat g}=0$. Since $D$ is elliptic Fredholm on the closed surface,
\begin{align*}
 \coker D\cong\ker D^{*,\widehat g}=0.
\end{align*}
Hence $D$ is surjective.
\end{proof}

\begin{example}\label{ex:split-curvature}
	Let $M=\mathbb{CP}^1$ equipped with the Fubini--Study metric, and let
	\begin{align*}
		E=\mathcal O(d_1)\oplus\cdots\oplus\mathcal O(d_\ell),
		\qquad
		d_1\leq d_2\leq\cdots\leq d_\ell.
	\end{align*}
	Equip each summand $\mathcal O(d_i)$ with its Hermitian--Einstein
	metric $h_i$ and $E$ with the orthogonal direct sum metric $h$. Hence the curvature endomorphism of $E$ is diagonal, and $\theta_{E,h,g}=\sqrt{-1}\Lambda_g R^{\mathcal O(d_1),h_1}=2\pi d_1/\Vol_g(\mathbb{CP}^1)$. Therefore,
	\begin{align*}
		\int_M\theta_{E,h,g}\,dV_g=2\pi d_1.
	\end{align*}
	Since $\chi(\mathbb{CP}^1)=2$, \Cref{thm:L2criterion} implies that every real-linear Cauchy--Riemann operator
	\begin{align*}
		D=\ddbar_E+A
	\end{align*}
is surjective whenever
	\begin{align*}
		\int_M |A|_{g,h}^2\,dV_g
		<
		2\pi(d_1+2).
	\end{align*}
Thus the surjectivity criterion depends only on the lowest curvature direction of the split bundle, while the zero-order term $A$ may couple the different summands arbitrarily, provided its $L^2$-energy satisfies the above bound.
\end{example}

\section{Harder--Narasimhan optimization of the integrated smallest eigenvalue}
This section is devoted to the proofs of \Cref{th:Op-integral} and \Cref{cor:AHNc}. We also provide a classical cohomological interpretation of \Cref{cor:AHNc} in the case where the real-linear term $A=0$.

We begin with the following upper bound for the integrated curvature term in terms of the minimal Harder--Narasimhan slope.
\begin{lemma}\label{lem:HN-upper}
	For every Hermitian metric $h$ on $E$,
	\begin{equation}\label{eq:HN-upper}
		\int_M\theta_{E,h,g}\,dV_g
		\leq 2\pi\mu_{\min}^{\HN}(E).
	\end{equation}
\end{lemma}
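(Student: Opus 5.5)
We will compare $\theta_{E,h,g}$ with the curvature of the final Harder--Narasimhan quotient $Q:=Q_\ell=E/E_{\ell-1}$, equipped with the quotient metric induced by $h$. The point is that the curvature of a quotient bundle is bounded below by the compression of the curvature of $E$, while the smallest eigenvalue is bounded above by any trace average.

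\emph{Curvature comparison.} Identify $Q$ with the $h$-orthogonal complement $E_{\ell-1}^{\perp}$, and let $h_Q$ denote the induced quotient metric. The second fundamental form $\beta$ of $E_{\ell-1}\subset E$ gives the standard Gauss--Codazzi formula for the quotient. With the sign conventions of the Chern connection this reads
\begin{align*}
 \ii\Lambda_g R^{Q,h_Q}=P\bigl(\ii\Lambda_gR^{E,h}\bigr)P+\ii\Lambda_g(\beta\wedge\beta^*)\big|_{Q},
\end{align*}
where $P$ is the orthogonal projection onto $E_{\ell-1}^{\perp}$. The second term is a positive semidefinite endomorphism, which is the familiar statement that curvature increases in quotients. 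The main obstacle is checking this sign carefully on a Riemann surface with $\Lambda_g$. If the sign is delicate, I would fall back on the pointwise identity
\begin{align*}
 \langle \ii\Lambda_gR^{Q}v,v\rangle=\langle \ii\Lambda_gR^{E}v,v\rangle+\abs{\beta^*v}^2
\end{align*}
for $v\in E_{\ell-1}^\perp$. This can be checked directly in a local holomorphic frame adapted to the filtration.

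\emph{Trace bound.} With $r=\rank Q$, the comparison gives pointwise
\begin{align*}
 r\,\theta_{E,h,g}\le \tr\bigl(P\,\ii\Lambda_gR^{E,h}P\bigr)\le \tr\bigl(\ii\Lambda_gR^{Q,h_Q}\bigr).
\end{align*}
The first inequality holds because the trace of a Hermitian endomorphism compressed to an $r$-dimensional subspace is at least $r$ times its smallest eigenvalue.

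\emph{Integration.} Integrating and using the Chern--Weil definition of degree, we get
\begin{align*}
 \int_M\theta_{E,h,g}\,dV_g\le \frac{2\pi\deg Q}{r}=2\pi\mu(Q_\ell)=2\pi\mu_{\min}^{\HN}(E).
\end{align*}
This argument applies equally to any holomorphic quotient, so together with \Cref{prop:quotient-mumin} it gives the same bound via the minimizing quotient.
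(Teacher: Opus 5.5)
Your proof is correct and is essentially the paper's argument: the quotient curvature formula with the positive semidefinite term $\ii\Lambda_g(\beta\wedge\beta^*)$, a trace comparison against $\theta_{E,h,g}$, and Chern--Weil integration. The one difference is that you apply the bound directly to the final Harder--Narasimhan quotient $Q_\ell$, which is slightly more economical: the paper treats an arbitrary quotient and then appeals to \Cref{prop:quotient-mumin}, but for this upper bound it only needs the trivial fact that $Q_\ell$ is a quotient of slope $\mu_{\min}^{\HN}(E)$.
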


\begin{proof}
	Let $\pi:E\twoheadrightarrow Q$ be any nonzero holomorphic quotient bundle, and let $h_Q$ be the quotient metric induced by $h$. Identify $Q$
	smoothly with the orthogonal complement $(\ker\pi)^\perp$. With respect to the orthogonal splitting
	$E=\ker\pi\oplus (\ker\pi)^\perp$, the standard quotient curvature formula gives
	\begin{align*}
		R^{Q,h_Q}
		=
		P_QR^{E,h}|_Q+\beta\wedge\beta^*,
	\end{align*}
	where $\beta$ is the second fundamental form. With our curvature
	convention, $\sqrt{-1}\Lambda_g(\beta\wedge\beta^*)$ is positive
	semidefinite. Since, by definition,
	\begin{align*}
		\sqrt{-1}\Lambda_gR^{E,h}
		\geq
		\theta_{E,h,g}\Id_E,
	\end{align*}
	it follows that
	\begin{align*}
		\sqrt{-1}\Lambda_gR^{Q,h_Q}
		\geq
		\theta_{E,h,g}\Id_Q.
	\end{align*}
	Taking the trace and integrating, we obtain
	\begin{align*}
		2\pi\deg Q=\int_M\tr\bigl(\sqrt{-1}\Lambda_gR^{Q,h_Q}\bigr)\,dV_g\geq
		\rank Q\int_M\theta_{E,h,g}\,dV_g.
	\end{align*}
	Hence
	\begin{align*}
		\int_M\theta_{E,h,g}\,dV_g
		\leq
		2\pi\mu(Q).
	\end{align*}
	Since $Q$ was arbitrary, \Cref{prop:quotient-mumin} yields
	\eqref{eq:HN-upper}.
\end{proof}

We now recall the following consequence of Bradlow's Hermitian--Einstein inequalities \cite[Theorem 5]{Bradlow}, restated using our conventions for curvature and volume.

\begin{theorem}\label{thm:Bradlow}
Let $E$ be a holomorphic vector bundle over a closed Riemann surface $(M,g)$. For every $\varepsilon>0$, there exists a smooth Hermitian metric $h_\varepsilon$ on $E$ such that
\begin{equation}\label{eq:Bradlow-lower}
 \ii\Lambda_gR^{E,h_\varepsilon}
 \ge
 \left(
 \frac{2\pi\mu_{\min}^{\HN}(E)}{\Vol_g(M)}-\varepsilon
 \right)\Id_E.
\end{equation}
If the Harder--Narasimhan filtration splits holomorphically and every Harder--Narasimhan quotient is polystable, one may take $\varepsilon=0$ with the orthogonal direct sum of Hermitian--Einstein metrics on the quotients.
\end{theorem}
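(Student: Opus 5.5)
The plan is to build $h_\varepsilon$ in two stages. First, put a nearly Hermitian--Einstein metric on each Harder--Narasimhan quotient. Second, glue these metrics along a $C^\infty$ splitting of the filtration \eqref{eq:HNfiltration}, rescaled so that the extension data, and hence its contribution to the curvature, becomes arbitrarily small. Every quotient satisfies $\mu(Q_i)\ge\mu(Q_\ell)=\mu_{\min}^{\HN}(E)$. So it is enough to arrange that the curvature of the glued metric is uniformly close to the block-diagonal matrix $\mathrm{diag}\bigl(2\pi\mu(Q_i)/\Vol_g(M)\bigr)$.

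\emph{Step 1 (semistable quotients).} For a semistable bundle $Q$ over the closed Riemann surface $(M,g)$ and every $\delta>0$, I would invoke the existence of approximate Hermitian--Einstein metrics. This is due to Kobayashi and Donaldson; on curves it follows from Donaldson's heat flow, whose $\sup_M\abs{\ii\Lambda_gR-\tfrac{2\pi\mu(Q)}{\Vol_g(M)}\Id}$ tends to $0$ exactly when $Q$ is semistable. It produces a metric $k_i$ on $Q_i$ with
\begin{align*}
\ii\Lambda_gR^{Q_i,k_i}\ge\Bigl(\frac{2\pi\mu(Q_i)}{\Vol_g(M)}-\delta\Bigr)\Id_{Q_i}.
\end{align*}
If $Q_i$ is polystable, one may take $\delta=0$, using the Narasimhan--Seshadri/Hermitian--Einstein metric. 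This step is the analytic heart of the statement. I would quote it rather than reprove it, since it is exactly the content of Bradlow's \cite[Theorem 5]{Bradlow} and Kobayashi's approximate Hermitian--Einstein theorem.

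\emph{Step 2 (splitting and rescaling).} Choose a $C^\infty$ isomorphism $E\cong Q_1\oplus\cdots\oplus Q_\ell$ compatible with the filtration. In this decomposition,
\begin{align*}
\ddbar_E=\mathrm{diag}(\ddbar_{Q_i})+\beta,
\end{align*}
where $\beta$ is strictly upper triangular, with components $\beta_{ij}\in\Omega^{0,1}(\mathrm{Hom}(Q_j,Q_i))$ for $i<j$. Apply the complex gauge transformation $\sigma_t=\mathrm{diag}(t^{1},t^{2},\dots,t^{\ell})$, or equivalently use the metric $h_t=\bigoplus_i t^{-2i}k_i$. This produces an isomorphic holomorphic structure whose off-diagonal part is $\beta^{(t)}_{ij}=t^{j-i}\beta_{ij}=O(t)$ for $0<t\le1$. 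The Chern curvature of $\bigl(\mathrm{diag}(\ddbar_{Q_i})+\beta^{(t)},\bigoplus k_i\bigr)$ is
\begin{align*}
\mathrm{diag}\bigl(R^{Q_i,k_i}\bigr)+\bigl(D'\beta^{(t)}-(\beta^{(t)})^*\text{-terms}\bigr)+\beta^{(t)}\wedge(\beta^{(t)})^*+(\beta^{(t)})^*\wedge\beta^{(t)}.
\end{align*}
All terms beyond the diagonal are $O(t)$ in $C^0$, uniformly on $M$, because $\beta$ is fixed and smooth. Hence
\begin{align*}
\ii\Lambda_gR^{E,h_t}\ge\Bigl(\frac{2\pi\mu_{\min}^{\HN}(E)}{\Vol_g(M)}-\delta-Ct\Bigr)\Id_E.
\end{align*}
Choosing $\delta=\varepsilon/2$ and $t<\varepsilon/(2C)$ gives \eqref{eq:Bradlow-lower}.

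\emph{Step 3 (split polystable case).} If the filtration splits holomorphically, we may take $\beta=0$, so no rescaling is needed. If moreover each $Q_i$ is polystable, the $k_i$ can be taken exactly Hermitian--Einstein. Then $\ii\Lambda_gR^{E,h}=\mathrm{diag}\bigl(2\pi\mu(Q_i)/\Vol_g(M)\bigr)\ge 2\pi\mu_{\min}^{\HN}(E)/\Vol_g(M)$, which gives $\varepsilon=0$ with the orthogonal direct sum.

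Apart from Step 1, the only delicate point is bookkeeping. We must check that the mixed terms coming from $\beta^{(t)}$, including the $\partial$-type derivative of $\beta^{(t)}$ with respect to the diagonal metric, are genuinely $O(t)$ in sup norm. This holds because each such term is linear or quadratic in $\beta^{(t)}$ with smooth coefficients independent of $t$.
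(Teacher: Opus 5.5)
Your argument is correct, but it takes a different route from the paper. The paper gives no proof of this statement. It simply quotes it as a consequence of Bradlow's Hermitian--Einstein inequalities \cite[Theorem 5]{Bradlow}, rewritten in its own normalization.

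You instead reconstruct the result from two ingredients. First, approximate Hermitian--Einstein metrics on the semistable Harder--Narasimhan quotients. Second, the degeneration $h_t=\bigoplus_i t^{-2i}k_i$, equivalently the gauge $\sigma_t^{-1}\ddbar_E\sigma_t$, which multiplies each extension block $\beta_{ij}$ by $t^{j-i}$. Your Step 2 is the general form of the computation the paper only asserts (``a direct computation shows'') in \Cref{ex:general-HN-curve-extension}. There the paper uses $h_t=t^2h_+\oplus h_-$, which agrees with yours up to an overall constant.

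Your bookkeeping is sound:
\begin{itemize}
\item on a curve the $(0,2)$ and $(2,0)$ terms vanish;
\item $D'$ comes from the fixed diagonal connection, so it commutes with the blockwise scaling;
\item every contribution beyond $\mathrm{diag}(\ii\Lambda_gR^{Q_i,k_i})$ has operator norm at most $Ct$ for $0<t\le1$.
\end{itemize}
Note that $C$ depends on the $k_i$, and hence on $\delta$, so $\delta$ must be fixed before $t$ is chosen.

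What your route buys is transparency. It shows that $\varepsilon>0$ is forced only by nonsplit extension data and by quotients that are not polystable, and it makes the $\varepsilon=0$ clause immediate. The paper's citation buys only brevity.

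Two remarks on Step 1:
\begin{itemize}
\item Citing \cite[Theorem 5]{Bradlow} there is somewhat circular, since the paper invokes that theorem for the entire statement. Cite Kobayashi's approximate Hermitian--Einstein theorem or Donaldson's flow instead.
\item Better still, on a curve Step 1 is itself a special case of your Step 2. Refine each $Q_i$ by a Jordan--H\"older filtration whose factors are stable of slope $\mu(Q_i)\ge\mu_{\min}^{\HN}(E)$. Put Narasimhan--Seshadri metrics on these factors and run your rescaling on the refined filtration of $E$ directly. Then the only analytic input is the existence of Hermitian--Einstein metrics on stable bundles, and no approximate metrics are needed.
\end{itemize}
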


\begin{theorem}[=\Cref{th:Op-integral}]\label{thm:optimal-curvature}
For every holomorphic vector bundle $E$ over a closed Riemann surface $M$,
\begin{equation*}
 \sup_{h\in\Herm(E)}
 \int_M\theta_{E,h,g}\,dV_g
 =2\pi\mu_{\min}^{\HN}(E).
\end{equation*}
\end{theorem}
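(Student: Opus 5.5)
The plan is to prove the two inequalities separately, using the results already stated.

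For the upper bound, I would invoke \Cref{lem:HN-upper}. It gives
\begin{align*}
\int_M\theta_{E,h,g}\,dV_g\leq 2\pi\mu_{\min}^{\HN}(E)
\end{align*}
for every $h\in\Herm(E)$. Taking the supremum over $h$ then gives $\sup_h\int_M\theta_{E,h,g}\,dV_g\le 2\pi\mu_{\min}^{\HN}(E)$. In particular the supremum is finite.

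For the lower bound, I would use \Cref{thm:Bradlow}. Fix $\varepsilon>0$ and let $h_\varepsilon$ be the metric provided there. Inequality \eqref{eq:Bradlow-lower} says that every eigenvalue of $\ii\Lambda_gR^{E,h_\varepsilon}(x)$ is at least $2\pi\mu_{\min}^{\HN}(E)/\Vol_g(M)-\varepsilon$. So this bound holds pointwise for $\theta_{E,h_\varepsilon,g}(x)$. Integrating over $M$ gives
\begin{align*}
\int_M\theta_{E,h_\varepsilon,g}\,dV_g\ge 2\pi\mu_{\min}^{\HN}(E)-\varepsilon\Vol_g(M).
\end{align*}
Letting $\varepsilon\to0$ shows that the supremum is at least $2\pi\mu_{\min}^{\HN}(E)$, which gives equality.

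Two points need care. First, $\theta_{E,h,g}$ must be integrable, so that the integrals make sense. It is continuous, being the smallest eigenvalue of a continuous Hermitian endomorphism field, so this holds. Second, the normalization in \Cref{thm:Bradlow} has to match the paper's conventions for $\deg$ and $\Lambda_g$. As a check, for a line bundle or a Hermitian--Einstein metric, $\ii\Lambda_gR$ equals the constant $2\pi\mu/\Vol_g(M)$, which is consistent with the Chern--Weil definition of $\deg$ given earlier. The real content is Bradlow's approximate Hermitian--Einstein construction. Since the paper states that theorem, the argument above reduces to routine bookkeeping.

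If one did not want to rely on Bradlow in this form, the alternative would be to construct the metrics directly. One would take Hermitian--Einstein (or approximate Hermitian--Einstein) metrics on the semistable quotients $Q_i$. One would then build $h$ on $E$ through a smooth splitting of the Harder--Narasimhan filtration, scaled by $t^{i}$ along the filtration steps. As $t\to0$ or $t\to\infty$, the off-diagonal second fundamental form terms become negligible. Because the slopes $\mu(Q_i)$ decrease, the smallest curvature eigenvalue then approaches $2\pi\mu(Q_\ell)/\Vol_g(M)$. Controlling these off-diagonal terms uniformly would be the main obstacle in that route, so I would cite \Cref{thm:Bradlow} instead.
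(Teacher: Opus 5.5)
Your proposal is correct and follows the paper's proof exactly. The upper bound comes from \Cref{lem:HN-upper}. For the lower bound, you take the smallest eigenvalue in \eqref{eq:Bradlow-lower}, integrate to get $2\pi\mu_{\min}^{\HN}(E)-\varepsilon\Vol_g(M)$, and let $\varepsilon\to0$.
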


\begin{proof}
The upper bound is given by \Cref{lem:HN-upper}. For the reverse inequality, choose the metric $h_\varepsilon$ from \Cref{thm:Bradlow}. Taking the smallest eigenvalue in \eqref{eq:Bradlow-lower} and integrating implies
\begin{align*}
 \int_M\theta_{E,h_\varepsilon,g}\,dV_g
 \ge
 2\pi\mu_{\min}^{\HN}(E)-\varepsilon\Vol_g(M).
\end{align*}
Taking $\varepsilon\searrow 0$, we get the lower bound.
\end{proof}
\begin{remark}
	(i) If $E$ is a line bundle, the unique quotient of its Harder--Narasimhan filtration is $E$ and $\mu_{\min}^{\HN}(E)=\deg(E)$. Hence \Cref{thm:optimal-curvature} reduces to the Chern--Weil formula.
	
	(ii) If the Harder--Narasimhan filtration of $E$ splits holomorphically and its graded factors are polystable, then each factor admits a Hermitian--Einstein metric (see \cite[Theorem 5]{Bradlow}). Taking the orthogonal direct sum of these metrics produces a Hermitian metric on $E$ for which the supremum in \Cref{thm:optimal-curvature} is attained. In particular, this applies when $E$ itself is polystable.
	
	(iii) If $E$ is semistable, its Harder--Narasimhan filtration has a single slope, namely $\mu_{\min}^{\HN}(E)=\mu(E)$. Thus the supremum of the integrated smallest eigenvalue is determined solely by $\mu(E)$.
\end{remark}

\Cref{thm:optimal-curvature} converts \Cref{thm:L2criterion} into a Harder--Narasimhan criterion, but the same Hermitian metric must be used to measure both the curvature term and $A$.

\begin{corollary}[=\Cref{cor:AHNc}]\label{cor:HN-A}
Let $h_\nu$ be a sequence of Hermitian metrics such that
\begin{equation}\label{eq:HN-approx}
 \int_M\theta_{E,h_\nu,g}\,dV_g
 \longrightarrow2\pi\mu_{\min}^{\HN}(E).
\end{equation}
If
\begin{equation}\label{eq:HN-A-limsup}
 \limsup_{\nu\to\infty}
 \int_M\abs{A}_{g,h_\nu}^2\,dV_g
 <2\pi\bigl(\mu_{\min}^{\HN}(E)+\chi(M)\bigr),
\end{equation}
then $D$ is surjective.
\end{corollary}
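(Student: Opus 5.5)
The plan is to reduce directly to \Cref{thm:L2criterion} by choosing a single metric $h_\nu$ with $\nu$ large. The two hypotheses, \eqref{eq:HN-approx} and \eqref{eq:HN-A-limsup}, are asymptotic statements along the same sequence. So it suffices to find one index $\nu$ at which the strict integral inequality \eqref{eq:ioc} holds for the pair $(g,h_\nu)$.

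First, set
\begin{align*}
 \delta:=2\pi\bigl(\mu_{\min}^{\HN}(E)+\chi(M)\bigr)-\limsup_{\nu\to\infty}\int_M\abs{A}_{g,h_\nu}^2\,dV_g,
\end{align*}
which is strictly positive by \eqref{eq:HN-A-limsup}. The $\limsup$ is finite, since the right-hand side of \eqref{eq:HN-A-limsup} is finite and the integrals are nonnegative.

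By the definition of $\limsup$, there is $\nu_1$ such that
\begin{align*}
 \int_M\abs{A}_{g,h_\nu}^2\,dV_g<\limsup_{\mu\to\infty}\int_M\abs{A}_{g,h_\mu}^2\,dV_g+\delta/2
\end{align*}
for all $\nu\ge\nu_1$. By \eqref{eq:HN-approx}, there is $\nu_2$ such that
\begin{align*}
 \int_M\theta_{E,h_\nu,g}\,dV_g>2\pi\mu_{\min}^{\HN}(E)-\delta/2
\end{align*}
for all $\nu\ge\nu_2$.

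Fix any $\nu\ge\max(\nu_1,\nu_2)$. Adding $2\pi\chi(M)$ to the second estimate and combining it with the first gives
\begin{align*}
 \int_M\theta_{E,h_\nu,g}\,dV_g+2\pi\chi(M)>2\pi\bigl(\mu_{\min}^{\HN}(E)+\chi(M)\bigr)-\delta/2>\int_M\abs{A}_{g,h_\nu}^2\,dV_g.
\end{align*}
This is exactly hypothesis \eqref{eq:ioc} for the metric $h=h_\nu$. \Cref{thm:L2criterion} then yields surjectivity of $D$.

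There is no real obstacle here. The only point to watch, as the remark preceding the corollary stresses, is that the curvature term and $|A|$ must be measured with the same metric. That is why the index is chosen simultaneously for both sequences rather than taking separate suprema. \Cref{thm:optimal-curvature} guarantees that sequences satisfying \eqref{eq:HN-approx} exist, so the hypothesis is not vacuous.
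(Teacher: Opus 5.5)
Your proposal is correct and is essentially the paper's argument: the paper also fixes a single index $\nu$ large enough, via $\liminf(x_\nu-y_\nu)\ge\liminf x_\nu-\limsup y_\nu$, so that \eqref{eq:ioc} holds for $h=h_\nu$, and then applies \Cref{thm:L2criterion}. Your explicit $\delta/2$ bookkeeping is just an unpacked form of that step; the only blemish is that reusing $\mu$ as an index variable clashes with the slope notation.
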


\begin{proof}
Set
\begin{align*}
 C_\nu:=\int_M\theta_{E,h_\nu,g}\,dV_g,
 \qquad
 B_\nu:=\int_M\abs{A}_{g,h_\nu}^2\,dV_g.
\end{align*}
By \eqref{eq:HN-approx},
\begin{align*}
 C_\nu+2\pi\chi(M)
 \longrightarrow
 2\pi\bigl(\mu_{\min}^{\HN}(E)+\chi(M)\bigr).
\end{align*}
Using the elementary inequality
\begin{align*}
 \liminf(x_\nu-y_\nu)
 \ge\liminf x_\nu-\limsup y_\nu,
\end{align*}
condition \eqref{eq:HN-A-limsup} implies
\begin{align*}
 \liminf_{\nu\to\infty}
 \bigl(C_\nu+2\pi\chi(M)-B_\nu\bigr)>0.
\end{align*}
Thus the hypothesis of \Cref{thm:L2criterion} holds for all sufficiently large $\nu$. Applying \Cref{thm:L2criterion} for such $\nu$ proves that $D$ is surjective.
\end{proof}

\begin{remark}\label{rmk:cohomological-interpretation}
	When $A=0$, the slope condition
	\begin{align*}
		\mu_{\min}^{\HN}(E)>-\chi(M)
	\end{align*}
	has a classical cohomological interpretation. Let
	\begin{align*}
		0=E_0\subset E_1\subset\cdots\subset E_\ell=E
	\end{align*}
	be the Harder--Narasimhan filtration of $E$, and write $Q_i:=E_i/E_{i-1}$. Since each $Q_i$ is semistable, for every $i$ we have
	\begin{align*}
		\mu(Q_i)\geq\mu_{\min}^{\HN}(E)>-\chi(M)=\deg K_M.
	\end{align*}
	
	We first show that
	\begin{align*}
		H^1(M,Q_i)=0
	\end{align*}
	for every Harder--Narasimhan quotient $Q_i$. By Serre duality,
	\begin{align*}
		H^1(M,Q_i)^*\cong\operatorname{Hom}_{\mathcal O_M}(Q_i,K_M).
	\end{align*}
	Suppose, to the contrary, that there exists a nonzero holomorphic morphism
	\begin{align*}
		\varphi:Q_i\longrightarrow K_M.
	\end{align*}
	Since $Q_i$ and $K_M$ are locally free coherent sheaves, $\operatorname{Im}\varphi$ is a coherent subsheaf of $K_M$. Moreover, $\operatorname{Im}\varphi$ is nonzero and torsion-free, because it is a subsheaf of the line bundle $K_M$. Since $M$ is a smooth complex curve, every torsion-free coherent sheaf is locally free. Thus $\operatorname{Im}\varphi$ is a holomorphic line bundle.
	
	The morphism $\varphi$ gives an exact sequence of vector bundles
	\begin{align}\label{eq:Serre-image-quotient}
		0\longrightarrow\ker\varphi\longrightarrow Q_i\longrightarrow \operatorname{Im}\varphi\longrightarrow0.
	\end{align}
	Since $Q_i$ is semistable, every nonzero holomorphic quotient bundle of $Q_i$ has slope at least $\mu(Q_i)$ by \Cref{prop:quotient-mumin}.
	Hence \eqref{eq:Serre-image-quotient} gives
	\begin{align}\label{eq:image-lower-slope}
		\mu(\operatorname{Im}\varphi)\geq\mu(Q_i).
	\end{align}
	
	On the other hand, the inclusion $\operatorname{Im}\varphi\subset K_M$ yields an exact sequence
	\begin{align}\label{eq:image-in-canonical}
		0\longrightarrow \operatorname{Im}\varphi\longrightarrow K_M\longrightarrow T\longrightarrow0,
	\end{align}
	where $T$ is a torsion sheaf. Taking degrees in \eqref{eq:image-in-canonical}, we obtain
	\begin{align*}
		\deg K_M=\deg \operatorname{Im}\varphi+\operatorname{length}(T),
	\end{align*}
	and therefore
	\begin{align}\label{eq:image-upper-slope}
		\mu(\operatorname{Im}\varphi)=\deg \operatorname{Im}\varphi\leq\deg K_M=-\chi(M).
	\end{align}
	Combining \eqref{eq:image-lower-slope} and \eqref{eq:image-upper-slope} gives
	\begin{align*}
		\mu(Q_i)\leq\mu(\operatorname{Im}\varphi)\leq-\chi(M),
	\end{align*}
	which contradicts
	\begin{align*}
		\mu(Q_i)>-\chi(M).
	\end{align*}
	Consequently,
	\begin{align*}
		\operatorname{Hom}_{\mathcal O_M}(Q_i,K_M)=0,
	\end{align*}
	and Serre duality gives
	\begin{align*}
		H^1(M,Q_i)=0
	\end{align*}
	for every $i$.
	
	It remains to pass from the Harder--Narasimhan quotients to $E$. For each $i$, the short exact sequence
	\begin{align*}
		0\longrightarrow E_{i-1}\longrightarrow E_i\longrightarrow Q_i\longrightarrow0
	\end{align*}
	induces the exact cohomology sequence
	\begin{align*}
		H^1(M,E_{i-1})\longrightarrow H^1(M,E_i)\longrightarrow H^1(M,Q_i)\longrightarrow H^2(M,E_{i-1}).
	\end{align*}
	Since $M$ is a Riemann surface,
	\begin{align*}
		H^2(M,E_{i-1})=0.
	\end{align*}
	Starting from $E_0=0$ and using $H^1(M,Q_i)=0$ for every $i$, an induction on $i$ therefore gives
	\begin{align*}
		H^1(M,E_i)=0
	\end{align*}
	for all $i$, and in particular
	\begin{align*}
		H^1(M,E)=0.
	\end{align*}
	By the Dolbeault theorem, this is equivalent to the surjectivity of
	\begin{align*}
		\ddbar_E:C^\infty(M,E)\longrightarrow\Omega^{0,1}(E).
	\end{align*}
	
	Thus, in the complex-linear case $A=0$, the slope condition $\mu_{\min}^{\HN}(E)>-\chi(M)$ yields surjectivity by the classical combination of Serre duality and Harder--Narasimhan theory. The real-linear result above extends this conclusion to operators $D=\ddbar_E+A$ by controlling the zero-order term through its $L^2$-energy along Harder--Narasimhan approximating metrics.
\end{remark}

We end this section by providing an example to illustrate \Cref{cor:HN-A}.
\begin{example}\label{ex:general-HN-curve-extension}
	Let $M$ be a closed Riemann surface of genus $\gamma\geq2$. Choose integers
	\begin{align*}
		d_->2\gamma-2,
		\qquad
		1\leq\delta\leq2\gamma-2,
	\end{align*}
	and set $d_+:=d_-+\delta$. Equivalently, one may write
	\begin{align*}
		d_-=2\gamma-2+q,
		\qquad
		d_+=2\gamma-2+q+\delta,
		\qquad
		q\geq1.
	\end{align*}
	
	Choose an effective divisor $Z$ of degree
	\begin{align*}
		\deg Z=2\gamma-2-\delta,
	\end{align*}
	and then $\deg K_M(-Z)=\delta$. Moreover, by Serre duality,
	\begin{align*}
		H^1(M,K_M(-Z))
		\cong
		H^0(M,K_M\otimes (K_M(-Z))^{-1})^*=
		H^0(M,\mathcal O(Z))^*
		\neq0,
	\end{align*}
	where the last inequality follows from the effectiveness of $Z$.
	
	Select a holomorphic line bundle $L_-$ of degree $d_-$ and define
	\begin{align*}
		L_+:=L_-\otimes K_M(-Z).
	\end{align*}
	Then $\deg L_+=d_+$ and $L_+\otimes L_-^{-1}\cong K_M(-Z)$. Set
	\begin{align*}
		Q_+:=L_+^{\oplus2},
		\qquad
		Q_-:=L_-^{\oplus2}.
	\end{align*}
	Since
	\begin{align*}
		\operatorname{Ext}^1(Q_-,Q_+)
		\cong
		H^1\bigl(M,\operatorname{Hom}(Q_-,Q_+)\bigr)\cong
		H^1(M,K_M(-Z))\otimes\operatorname{End}(\mathbb C^2)
		\neq0,
	\end{align*}
	we may choose a nonsplit extension
	\begin{align*}
		0\longrightarrow Q_+
		\longrightarrow E
		\longrightarrow Q_-
		\longrightarrow0.
	\end{align*}
	The bundles $Q_+$ and $Q_-$ are polystable of slopes $d_+$ and $d_-$, respectively, and $d_+>d_-$. Hence
	\begin{align*}
		0\subset Q_+\subset E
	\end{align*}
	is the Harder--Narasimhan filtration of $E$, and
	\begin{align*}
		\mu_{\min}^{\HN}(E)=d_-.
	\end{align*}
	
	Choose Hermitian--Einstein metrics $h_+$ and $h_-$ on $Q_+$ and $Q_-$ and a smooth splitting
	\begin{align*}
		E\cong Q_+\oplus Q_-.
	\end{align*}
	With respect to this splitting, the holomorphic structure on $E$ has the form
	\begin{align*}
		\ddbar_E
		=
		\begin{pmatrix}
			\ddbar_{Q_+}&\beta\\
			0&\ddbar_{Q_-}
		\end{pmatrix},
	\end{align*}
	where $\beta\in\Omega^{0,1}(\operatorname{Hom}(Q_-,Q_+))$ represents the nonzero extension class. Consider the family of Hermitian metrics
	\begin{align*}
		h_t=t^2h_+\oplus h_-,
		\qquad
		t\longrightarrow0.
	\end{align*}
	Since $h_t$ restricts to $t^2h_+$ on $Q_+$ and to the fixed metric $h_-$ on $Q_-$, while $\beta$ is a $(0,1)$-form with values in $\operatorname{Hom}(Q_-,Q_+)$, its pointwise norm satisfies
	\begin{align*}
		|\beta|_{g,h_t}=t\,|\beta|_{g,h_+\oplus h_-}.
	\end{align*}
	Thus the extension term becomes asymptotically negligible, and a direct computation shows that
	\begin{align*}
		\int_M\theta_{E,h_t,g}\,dV_g
		\longrightarrow
		2\pi\mu_{\min}^{\HN}(E)
		=
		2\pi d_-.
	\end{align*}
	
	We select a nonzero smooth real-linear bundle homomorphism
	\begin{align*}
		\Phi:Q_+\longrightarrow\Lambda^{0,1}T^*M\otimes Q_+,
	\end{align*}
	and define
	\begin{align*}
		A=
		\begin{pmatrix}
			\Phi&0\\
			0&0
		\end{pmatrix},
		\qquad
		D=\ddbar_E+A.
	\end{align*}
	Since $\Phi$ is a $(0,1)$-form with values in $\operatorname{End}_{\mathbb R}(Q_+)$, a constant rescaling of $h_+$ does not change its pointwise operator norm. Hence
	\begin{align*}
		\int_M|A|_{g,h_t}^2\,dV_g
		=
		\int_M|A|_{g,h_+\oplus h_-}^2\,dV_g
	\end{align*}
	for every $t>0$.
	
	Since $\chi(M)=2-2\gamma$, we have
	\begin{align*}
		2\pi\bigl(\mu_{\min}^{\HN}(E)+\chi(M)\bigr)=
		2\pi\bigl(d_-+2-2\gamma\bigr)=2\pi q,
		\qquad
		q\geq1.
	\end{align*}
	Therefore \Cref{cor:HN-A} gives
	\begin{align*}
		\int_M|A|_{g,h_+\oplus h_-}^2\,dV_g
		<
		2\pi q
		\quad\Longrightarrow\quad
		D\text{ is surjective}.
	\end{align*}
\end{example}

\section{Surjectivity via holomorphic filtrations}
In this section, we establish quotientwise surjectivity criteria for real-linear Cauchy--Riemann operators preserving holomorphic filtrations. We also give several examples that exhibit the different filtration mechanisms in these criteria.

\begin{definition}
A real-linear Cauchy--Riemann operator $D=\ddbar_E+A$ is said to \textit{preserve a holomorphic filtration}
\begin{equation}\label{eq:general-filtration}
 0=E_0\subset E_1\subset\cdots\subset E_m=E
\end{equation}
if for each $1\le i\le m$
\begin{equation}\label{eq:D-invariant}
 D:C^\infty(M,E_i)\longrightarrow\Omega^{0,1}(E_i).
\end{equation}
\end{definition}
Since every $E_i$ in \eqref{eq:general-filtration} is a holomorphic subbundle, condition \eqref{eq:D-invariant} is equivalent to
\begin{align*}
	A(E_i)\subseteq\Lambda^{0,1}T^*M\otimes E_i.
\end{align*}
Let $Q_i:=E_i/E_{i-1}$ and $p_i:E_i\to Q_i$ be the quotient map. If $D$ preserves the filtration, for $s\in C^\infty(M,E_i)$ we define
\begin{align}\label{eq:induced-D}
 D_i:C^\infty(M,Q_i)&\longrightarrow\Omega^{0,1}(Q_i)\nonumber\\
 p_i(s)&\longmapsto p_i(Ds),
\end{align}
where by abuse of notation, $p_i$ also denotes the induced map on spaces of $(0,1)$-forms.
\begin{lemma}
The operator $D_i$ in \eqref{eq:induced-D} is well-defined and
\begin{align*}
 D_i=\ddbar_{Q_i}+A_i,
\end{align*}
where $\ddbar_{Q_i}$ and $A_i$ are induced by $\ddbar_E$ and $A$, respectively.
\end{lemma}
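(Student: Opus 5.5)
We need to show two things: that $D_i$ is independent of the choice of lift $s$ of a section of $Q_i$, and that it splits as a complex-linear Cauchy--Riemann operator plus a zero-order real-linear term, both induced from $E$.

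For well-definedness, we first note that every smooth section $q$ of $Q_i$ admits a smooth lift $s\in C^\infty(M,E_i)$ with $p_i(s)=q$. This holds because $p_i:E_i\to Q_i$ is a surjection of smooth vector bundles, so a smooth splitting exists, for instance the orthogonal complement of $E_{i-1}$ with respect to any Hermitian metric. If $s,s'$ are two lifts, then $s-s'\in C^\infty(M,E_{i-1})$. Since $D$ preserves the filtration, $D(s-s')\in\Omega^{0,1}(E_{i-1})$, and therefore $p_i(Ds)=p_i(Ds')$. The same invariance gives $Ds\in\Omega^{0,1}(E_i)$ for $s\in C^\infty(M,E_i)$, so $p_i(Ds)$ makes sense, with $p_i$ applied to the $E_i$ factor of $\Lambda^{0,1}T^*M\otimes E_i$. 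This step uses the equivalence, stated after the definition, between \eqref{eq:D-invariant} and $A(E_i)\subseteq\Lambda^{0,1}T^*M\otimes E_i$.

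For the decomposition, we treat the two parts of $D$ separately. Because $E_{i-1}\subset E_i\subset E$ are holomorphic subbundles, $\ddbar_E$ maps $C^\infty(M,E_j)$ into $\Omega^{0,1}(E_j)$ for each $j$; locally this holds since $E_j$ is spanned by holomorphic frames, and the Leibniz rule then gives the claim. By the argument of the previous paragraph, $p_i(\ddbar_E s)$ therefore depends only on $p_i(s)$. To identify this operator with $\ddbar_{Q_i}$, take a local holomorphic frame of $E_i$ and push it forward by $p_i$ to obtain a local holomorphic frame of $Q_i$. In such a frame, writing $s=\sum f_ae_a$, we get $p_i(\ddbar_E s)=\sum \ddbar f_a\otimes p_i(e_a)$, which is exactly $\ddbar_{Q_i}$ applied to $p_i(s)$. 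Since $\ddbar_E$ preserves each $E_j$ and $D$ does too, $A=D-\ddbar_E$ also maps $E_{i-1}\to\Lambda^{0,1}T^*M\otimes E_{i-1}$ and $E_i\to\Lambda^{0,1}T^*M\otimes E_i$. Consequently $A$ descends fibrewise to a real-linear bundle map $A_i:Q_i\to\Lambda^{0,1}T^*M\otimes Q_i$, defined by $A_i(p_i v)=p_i(Av)$; it is smooth because it can be computed using a smooth splitting. Linearity of $p_i$ then gives $D_i=\ddbar_{Q_i}+A_i$.

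None of these steps presents a genuine obstacle. The only point requiring care is verifying that the induced operator $p_i\circ\ddbar_E$ coincides with the Cauchy--Riemann operator defining the quotient holomorphic structure on $Q_i$, and the local holomorphic frame computation above settles it.
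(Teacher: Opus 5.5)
Your proof is correct and follows the same route as the paper. Independence of the lift comes from $D\bigl(C^\infty(M,E_{i-1})\bigr)\subseteq\Omega^{0,1}(E_{i-1})$, and $D_i=\ddbar_{Q_i}+A_i$ follows because $\ddbar_E$ and $A$ each preserve $E_{i-1}\subset E_i$ and so descend to $Q_i$; you simply write out the lifting and local-frame details that the paper leaves implicit. One small slip: applying $p_i$ to a holomorphic frame of $E_i$ gives only a holomorphic spanning set of $Q_i$, not a frame (use a frame adapted to $E_{i-1}$ if you want a genuine frame), but your Leibniz-rule computation is valid either way.
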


\begin{proof}
If $s,s'\in C^\infty(M,E_i)$ are two lifts of the same section of $Q_i$, then $s'-s\in C^\infty(M,E_{i-1})$. By invariance,
\begin{align*}
 D(s'-s)\in\Omega^{0,1}(E_{i-1}),
\end{align*}
so $p_i(Ds')=p_i(Ds)$. Thus \eqref{eq:induced-D} is well-defined. Since $E_i$ and $E_{i-1}$ are holomorphic subbundles, $\ddbar_E$ induces the quotient holomorphic structure $\ddbar_{Q_i}$ on $Q_i$, and the zero-order map $A$ induces $A_i$.
\end{proof}

\begin{proposition}\label{prop:filtered}
Assume that $D$ preserves the holomorphic filtration \eqref{eq:general-filtration}. If every induced operator
\begin{align*}
 D_i:C^\infty(M,Q_i)\longrightarrow\Omega^{0,1}(Q_i)
\end{align*}
is surjective, then $D$ is surjective.
\end{proposition}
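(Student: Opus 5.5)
We argue by induction on $i$, showing that the restricted operator
\begin{align*}
D|_{E_i}:C^\infty(M,E_i)\longrightarrow\Omega^{0,1}(E_i)
\end{align*}
is surjective for every $0\le i\le m$. This restriction is well-defined by the invariance condition \eqref{eq:D-invariant}. The case $i=0$ is trivial. For $i=1$ we have $Q_1=E_1$ and $D_1=D|_{E_1}$, so the claim is exactly the hypothesis. The case $i=m$ is the proposition.

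For the inductive step, assume $D|_{E_{i-1}}$ is surjective and let $\eta\in\Omega^{0,1}(E_i)$. First apply the quotient map to get $p_i\eta\in\Omega^{0,1}(Q_i)$. By surjectivity of $D_i$ there is $\sigma\in C^\infty(M,Q_i)$ with $D_i\sigma=p_i\eta$. Since $p_i:E_i\to Q_i$ is a surjective smooth bundle map, we may choose a smooth lift $s\in C^\infty(M,E_i)$ with $p_i(s)=\sigma$, for instance using a smooth splitting $E_i\cong E_{i-1}\oplus E_{i-1}^{\perp}$. By the definition \eqref{eq:induced-D},
\begin{align*}
p_i(Ds)=D_i\sigma=p_i\eta,
\end{align*}
so $\eta-Ds\in\Omega^{0,1}(E_{i-1})$. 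The inductive hypothesis then gives $s'\in C^\infty(M,E_{i-1})$ with $Ds'=\eta-Ds$, and hence $D(s+s')=\eta$.

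One technical point needs care: surjectivity is meant in the Sobolev/Fredholm sense, and the preliminaries note that this is independent of $p$ and equivalent to surjectivity on smooth sections. I would handle it in one of two ways. The first option is to run the same argument with $W^{1,p}$ and $L^p$ sections, since the lifting and the quotient maps are bounded zero-order bundle maps. The second option is to invoke elliptic regularity: if $D$ is surjective $W^{1,p}\to L^p$ and the right-hand side is smooth, then any preimage is smooth. This is the main place where an argument must be supplied, and it is routine. No analytic estimates are needed; the argument is purely algebraic, a diagram chase on the short exact sequences $0\to E_{i-1}\to E_i\to Q_i\to0$ intertwined with $D$.
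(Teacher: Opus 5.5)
Your proof is correct and is essentially the paper's argument. The paper inducts on the filtration length $m$: it lifts a solution of $D_m\bar s=p_m(\eta)$ to $E$ and then solves for the remainder inside $E_{m-1}$, which is your inductive step carried out from the top quotient downward. Your remark on smooth versus Sobolev surjectivity agrees with the paper's preliminaries.
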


\begin{proof}
The argument is similar to that in \Cref{rmk:cohomological-interpretation}. We argue by induction on $m$. The assertion is immediate for $m=1$. Assume it has been proved for filtrations of length $m-1$. Consider the smooth short exact sequence
\begin{equation*}
 0\longrightarrow E_{m-1}\longrightarrow E
 \stackrel{p_m}{\longrightarrow}Q_m\longrightarrow0.
\end{equation*}
Every short exact sequence of smooth vector bundles splits after choosing a Hermitian metric, so every smooth section of $Q_m$ has a smooth lift to $E$.

Let $\eta\in\Omega^{0,1}(E)$. Since $D_m$ is surjective, there exists $\bar s\in C^\infty(M,Q_m)$ satisfying
\begin{align*}
 D_m\bar s=p_m(\eta).
\end{align*}
Choose a smooth lift $s\in C^\infty(M,E)$ with $p_m(s)=\bar s$. Compatibility of $D$ with the quotient gives
\begin{align*}
 p_m(\eta-Ds)=p_m(\eta)-D_m\bar s=0.
\end{align*}
Therefore
\begin{align*}
 \eta-Ds\in\Omega^{0,1}(E_{m-1}).
\end{align*}
The restriction of $D$ to $E_{m-1}$ preserves the shortened filtration
\begin{align*}
 0=E_0\subset\cdots\subset E_{m-1},
\end{align*}
and its quotient operators are $D_1,\ldots,D_{m-1}$. By the induction hypothesis, there exists $t\in C^\infty(M,E_{m-1})$ such that
\begin{align*}
 Dt=\eta-Ds.
\end{align*}
Then $D(s+t)=\eta$. Since $\eta$ was arbitrary, $D$ is surjective.
\end{proof}

\Cref{prop:filtered} reduces the surjectivity problem to the successive quotient operators. We first combine it with the Harder--Narasimhan criterion \Cref{cor:HN-A}.

\begin{corollary}
Assume that $D$ preserves the holomorphic filtration \eqref{eq:general-filtration}. Let $D_i=\ddbar_{Q_i}+A_i$ be the induced operator on the quotient $Q_i$. For every $i$, suppose that there exists a sequence of Hermitian metrics $h_{i,\nu}$ on $Q_i$ such that
\begin{equation}\label{eq:quotient-HN-approx}
 \int_M\theta_{Q_i,h_{i,\nu},g}\,dV_g
 \longrightarrow 2\pi\mu_{\min}^{\HN}(Q_i)
\end{equation}
and
\begin{equation}\label{eq:quotient-HN-energy}
 \limsup_{\nu\to\infty}
 \int_M\abs{A_i}_{g,h_{i,\nu}}^2\,dV_g
 <2\pi\bigl(\mu_{\min}^{\HN}(Q_i)+\chi(M)\bigr).
\end{equation}
Then $D$ is surjective.
\end{corollary}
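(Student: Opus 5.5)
The proof combines \Cref{prop:filtered} with \Cref{cor:HN-A}, applied separately on each quotient. By \Cref{prop:filtered}, it is enough to show that every induced operator $D_i=\ddbar_{Q_i}+A_i$ is surjective. Here $D_i$ is itself a real-linear Cauchy--Riemann operator on the holomorphic vector bundle $Q_i$ over the same closed Riemann surface $M$. Its complex-linear part is the quotient holomorphic structure $\ddbar_{Q_i}$. Its zero-order part $A_i:Q_i\to\Lambda^{0,1}T^*M\otimes Q_i$ is the smooth real-linear homomorphism induced by $A$. So all the standing hypotheses of \Cref{cor:HN-A} hold with $E$ replaced by $Q_i$ and $A$ replaced by $A_i$.

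Fix $i$. The hypotheses \eqref{eq:quotient-HN-approx} and \eqref{eq:quotient-HN-energy} are exactly \eqref{eq:HN-approx} and \eqref{eq:HN-A-limsup} for the pair $(Q_i,A_i)$ and the sequence $h_{i,\nu}$. \Cref{cor:HN-A} therefore gives that $D_i$ is surjective. Doing this for each $1\le i\le m$ and then invoking \Cref{prop:filtered} shows that $D$ is surjective.

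The only points needing care are bookkeeping, and they are already settled by the lemma preceding \Cref{prop:filtered}. That lemma shows $D_i$ is well defined and has the form $\ddbar_{Q_i}+A_i$. The smoothness of $A_i$ follows because $A$ is smooth and the quotient maps $p_i$ are smooth bundle maps. One should also note that surjectivity in \Cref{prop:filtered} is phrased on smooth sections, while \Cref{cor:HN-A} concerns the Sobolev extension. These agree because, as noted in the preliminaries, surjectivity is independent of $p$ and cokernels are represented by smooth sections via elliptic regularity. There is no real obstacle: the corollary is a direct composition of the two earlier results.
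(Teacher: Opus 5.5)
Your proposal is correct and follows essentially the same route as the paper: apply \Cref{cor:HN-A} to each induced operator $D_i=\ddbar_{Q_i}+A_i$ with the sequence $h_{i,\nu}$ to get surjectivity of $D_i$, then conclude with \Cref{prop:filtered}. Your extra remarks (the induced operator is well defined, $A_i$ is smooth, and surjectivity on smooth sections agrees with Sobolev surjectivity) are accurate bookkeeping that the paper leaves implicit.
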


\begin{proof}
Fix $i$. Applying \Cref{cor:HN-A} to the real-linear Cauchy--Riemann operator $D_i=\ddbar_{Q_i}+A_i$ and the sequence $h_{i,\nu}$, estimate \eqref{eq:quotient-HN-energy} implies that $D_i$ is surjective. The conclusion therefore follows from \Cref{prop:filtered}.
\end{proof}

The criterion still requires quantitative control of the induced zero-order terms. Such control is, however, unnecessary in two important cases: either the induced zero-order term on a quotient vanishes, or the quotient has rank one. For the rank-one setting, the key surjectivity result we employ is the following theorem due to Hofer--Lizan--Sikorav.
\begin{theorem}[{\cite[Theorem 1']{HLS}}]\label{thm:HLSline}
	Let $L\to M$ be a holomorphic line bundle over a closed Riemann surface, and let
	\begin{align*}
		D:C^\infty(M,L)\longrightarrow\Omega^{0,1}(L)
	\end{align*}
	be an arbitrary real-linear Cauchy--Riemann operator. If
	\begin{equation*}
		\deg L>-\chi(M),
	\end{equation*}
	then $D$ is surjective.
\end{theorem}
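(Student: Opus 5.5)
The statement is \Cref{thm:HLSline}, the rank-one theorem of Hofer--Lizan--Sikorav. The plan is to prove it through the similarity principle and a zero count. Since $D$ is Fredholm, the standard $L^2$ argument (elliptic regularity is valid for real-linear operators) identifies
\begin{align*}
\coker D\cong\ker D^*,\qquad D^*:\Omega^{0,1}(L)\longrightarrow C^\infty(M,L).
\end{align*}
Fix metrics $g$ on $M$ and $h$ on $L$. Then $\Lambda^{0,1}T^*M\otimes L\cong\Lambda^{1,0}T^*M\otimes\overline{L}$, and $\overline{L}$ is isomorphic via $h$ to $L^{-1}$, so $\Lambda^{0,1}T^*M\otimes L\cong K_M\otimes L^{-1}$. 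Under this identification, up to conjugation, $D^*$ becomes $-\ddbar_{K_M\otimes L^{-1}}+A'$ for some real-linear zero-order term $A'$. In other words, $\ker D^*$ is the kernel of a real-linear Cauchy--Riemann operator on the line bundle
\begin{align*}
F:=K_M\otimes L^{-1},\qquad \deg F=-\chi(M)-\deg L.
\end{align*}
The first step is to check this adjoint computation carefully. This is a standard Serre-duality-type pairing: integrate $\langle Ds,\eta\rangle$ and use Stokes, noting that real-linearity only affects the zero-order term.

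The second step is the similarity principle for real-linear Cauchy--Riemann operators on a line bundle. Let $\xi$ be a nontrivial local solution of $\ddbar\xi+a\xi+b\bar\xi=0$ in a trivialization. Define $c:=a+b\bar\xi/\xi$ where $\xi\neq0$ and $c:=a$ elsewhere, so that $c\in L^\infty$. Then $\xi$ solves the complex-linear equation $\ddbar\xi+c\xi=0$. Solving $\ddbar\phi=c$ locally with $\phi\in W^{1,p}$ gives that $e^{\phi}\xi$ is holomorphic. Consequently the zeros of $\xi$ are isolated, each has positive multiplicity, and $\xi$ cannot vanish on an open set unless it vanishes identically. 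Unique continuation makes this global.

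The third step is the count. A nonzero $\eta\in\ker D^*$, viewed as a section of $F$, has finitely many zeros. Their total multiplicity equals $\deg F$, computed as the degree of the continuous section, since each local multiplicity is the winding number of the holomorphic function $e^{\phi}\xi$. This gives
\begin{align*}
0\le\deg F=-\chi(M)-\deg L<0,
\end{align*}
a contradiction. Hence $\ker D^*=0$, and $D$ is surjective.

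The main obstacle is the similarity principle in the second step. Specifically, one must check that the rewriting with the bounded coefficient $c$ is legitimate despite the possible accumulation of zeros. To handle this, I would first derive a Carleman-type or Aronszajn unique-continuation statement from $\abs{\ddbar\xi}\le C\abs{\xi}$, and then apply the $L^\infty$-coefficient trick locally. Degree counting via winding numbers then follows routinely.
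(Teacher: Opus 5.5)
Your proposal is correct and follows the route the paper indicates. The paper gives no proof of its own: it cites Hofer--Lizan--Sikorav and describes the result as a zero-counting argument based on the similarity principle, which is exactly your argument (identify $\ker D^*$ with solutions of a real-linear Cauchy--Riemann operator on $K_M\otimes L^{-1}$, apply the similarity principle, and count zeros positively against a negative degree).

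The obstacle you flag is not a real one. The identity $c\xi=a\xi+b\bar\xi$ holds pointwise everywhere, since both sides vanish where $\xi=0$. So no prior Carleman or Aronszajn unique-continuation argument is needed: local unique continuation follows directly from the holomorphy of $e^{\phi}\xi$.
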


\begin{theorem}[=\Cref{thm:intro-filtration-criteria}]\label{thm:filtration-criteria}
Suppose $D=\ddbar_E+A$ preserves a holomorphic filtration
	\begin{align*}
		0=E_0\subset E_1\subset\cdots\subset E_\ell=E,
	\end{align*}
and set $Q_i:=E_i/E_{i-1}$. Assume that
	\begin{align}\label{eq:slope}
		\mu_{\min}^{\HN}(Q_i)>-\chi(M)
	\end{align}
for every $1\leq i\leq\ell$. Suppose moreover that, for each $i$, one of the following conditions holds:
	\begin{enumerate}[label=\textup{(\roman*)}]
		\item $A$ strictly lowers the $i$-{\rm th} step of the filtration, i.e.,
		\begin{align*}
			A(E_i)\subseteq
			\Lambda^{0,1}T^*M\otimes E_{i-1};
		\end{align*}
		\item
$Q_i$ is a holomorphic line bundle.
	\end{enumerate}
Then $D$ is surjective.
\end{theorem}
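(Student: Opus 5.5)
By \Cref{prop:filtered}, it suffices to show that each induced quotient operator $D_i=\ddbar_{Q_i}+A_i$ is surjective. First I check that \Cref{prop:filtered} applies. Condition (i) gives $A(E_i)\subseteq\Lambda^{0,1}T^*M\otimes E_{i-1}\subseteq\Lambda^{0,1}T^*M\otimes E_i$. In case (ii) invariance of $E_i$ is part of the standing assumption that $D$ preserves the filtration. So in all cases $D$ preserves the filtration and the induced operators $D_i$ are defined by the preceding lemma. The proof then proceeds index by index, according to which alternative holds for $i$.

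\emph{Case (i).} The induced zero-order term is $A_i(p_i s)=p_i(As)$ for $s\in C^\infty(M,E_i)$. Since $As$ takes values in $\Lambda^{0,1}T^*M\otimes E_{i-1}=\ker p_i$, we get $A_i=0$ and $D_i=\ddbar_{Q_i}$. I will apply \Cref{cor:HN-A} to $Q_i$ with $A=0$. By \Cref{thm:optimal-curvature}, applied to $Q_i$ (for instance via the metrics of \Cref{thm:Bradlow} with $\varepsilon=1/\nu$), there is a sequence $h_{i,\nu}$ with $\int_M\theta_{Q_i,h_{i,\nu},g}\,dV_g\to 2\pi\mu_{\min}^{\HN}(Q_i)$. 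The energy term is identically zero, so the hypothesis of \Cref{cor:HN-A} reduces to $0<2\pi(\mu_{\min}^{\HN}(Q_i)+\chi(M))$, which is exactly \eqref{eq:slope}. Hence $D_i$ is surjective. Alternatively one could use the cohomological argument of \Cref{rmk:cohomological-interpretation}, which gives $H^1(M,Q_i)=0$ directly.

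\emph{Case (ii).} Here $Q_i$ is a line bundle, so its Harder--Narasimhan filtration is trivial and $\mu_{\min}^{\HN}(Q_i)=\deg Q_i$. Then \eqref{eq:slope} reads $\deg Q_i>-\chi(M)$. Since $D_i$ is a real-linear Cauchy--Riemann operator on $Q_i$, \Cref{thm:HLSline} gives surjectivity with no condition on $A_i$.

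Once every $D_i$ is surjective, \Cref{prop:filtered} yields that $D$ is surjective. No step is a real obstacle. The only point needing care is verifying that $A_i=0$ in case (i), that is, that the quotient zero-order term is computed through $p_i$ on the $\Lambda^{0,1}$-valued forms. Beyond that, it is bookkeeping that $\mu_{\min}^{\HN}$ of a line bundle equals its degree.
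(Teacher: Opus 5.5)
Your proposal is correct and matches the paper's proof: in case (i) you show $A_i=0$ and apply \Cref{cor:HN-A} to $Q_i$ along a Harder--Narasimhan approximating sequence of metrics, and in case (ii) you use $\mu_{\min}^{\HN}(Q_i)=\deg Q_i$ and invoke \Cref{thm:HLSline}. You then conclude with \Cref{prop:filtered}, and your explicit justification of the approximating sequence via \Cref{thm:optimal-curvature} and \Cref{thm:Bradlow} fills in a step the paper leaves implicit.
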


\begin{proof}
Fix $i$. If $A$ strictly lowers the $i$-{\rm th} step of the filtration, then the induced zero-order term $A_i=0$ on $Q_i$. Choose a sequence $h_{i,\nu}$ satisfying \eqref{eq:quotient-HN-approx}. Then
\begin{align*}
 \int_M\abs{A_i}_{g,h_{i,\nu}}^2\,dV_g=0
 \ \text{for every }\nu.
\end{align*}
The slope hypothesis \eqref{eq:slope} is equivalent to
\begin{align*}
 2\pi\bigl(\mu_{\min}^{\HN}(Q_i)+\chi(M)\bigr)>0,
\end{align*}
so \eqref{eq:quotient-HN-energy} holds. Hence \Cref{cor:HN-A} yields the surjectivity of the induced operator $D_i$.

If $Q_i$ is a holomorphic line bundle, the slope hypothesis \eqref{eq:slope} means that
\begin{align*}
	\deg Q_i=\mu_{\min}^{\HN}(Q_i)>-\chi(M).
\end{align*}
It follows from \Cref{thm:HLSline} that $D_i$ is surjective.

The filtered surjectivity principle \Cref{prop:filtered} then gives the surjectivity of $D$.
\end{proof}

\begin{example}\label{ex:complete-line-flag}
Let $M$ be a closed Riemann surface of genus $\gamma$. Choose holomorphic line bundles $L_1,L_2,L_3$ of degrees
\begin{align*}
	\deg L_i>2\gamma-2,
	\qquad
	1\leq i\leq3.
\end{align*}
Set
\begin{align*}
	E=L_1\oplus L_2\oplus L_3,
	\qquad
	F_k=L_1\oplus\cdots\oplus L_k.
\end{align*}
Then
\begin{align*}
	0\subset F_1\subset F_2\subset F_3=E
\end{align*}
is a holomorphic filtration whose successive quotients are
\begin{align*}
	Q_1\cong L_1,
	\qquad
	Q_2\cong L_2,
	\qquad
	Q_3\cong L_3.
\end{align*}

For $i\leq j$, let
\begin{align*}
	A_{ij}:L_j\longrightarrow\Lambda^{0,1}T^*M\otimes L_i
\end{align*}
be arbitrary smooth real-linear bundle homomorphisms. Define
\begin{align*}
	D=
	\begin{pmatrix}
		\ddbar_{L_1}+A_{11}&A_{12}&A_{13}\\
		0&\ddbar_{L_2}+A_{22}&A_{23}\\
		0&0&\ddbar_{L_3}+A_{33}
	\end{pmatrix}.
\end{align*}
Then $D$ preserves the holomorphic filtration
\begin{align*}
	0\subset F_1\subset F_2\subset F_3=E,
\end{align*}
and the induced operators on the successive quotients are
\begin{align*}
	D_i=\ddbar_{L_i}+A_{ii},
	\qquad
	1\leq i\leq3.
\end{align*}
Since each $Q_i\cong L_i$ is a holomorphic line bundle and
\begin{align*}
	\mu_{\min}^{\HN}(Q_i)=\deg L_i>2\gamma-2=-\chi(M),
\end{align*}
\Cref{thm:filtration-criteria} implies that $D$ is surjective.
\end{example}

\begin{corollary}[=\Cref{cor:ASNlc}]\label{cor:strict-HN}
Assume that $A$ strictly lowers the Harder--Narasimhan filtration \eqref{eq:HNfiltration}, i.e.,
\begin{align}\label{eq:strict-HN}
	A(E_i)\subseteq
	\Lambda^{0,1}T^*M\otimes E_{i-1}\ \text{for every}\ 1\leq i\leq\ell,
\end{align}
and
\begin{equation*}
 \mu_{\min}^{\HN}(E)>-\chi(M).
\end{equation*}
Then $D$ is surjective.
\end{corollary}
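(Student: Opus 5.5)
We deduce this directly from \Cref{thm:filtration-criteria}, applied to the Harder--Narasimhan filtration \eqref{eq:HNfiltration} itself. First we check that $D$ preserves this filtration. Each $E_i$ is a holomorphic subbundle, so $\ddbar_E$ maps $C^\infty(M,E_i)$ into $\Omega^{0,1}(E_i)$. The hypothesis \eqref{eq:strict-HN} gives
\begin{align*}
A(E_i)\subseteq\Lambda^{0,1}T^*M\otimes E_{i-1}\subseteq\Lambda^{0,1}T^*M\otimes E_i.
\end{align*}
Hence $D=\ddbar_E+A$ preserves the filtration in the sense of \eqref{eq:D-invariant}. The same hypothesis \eqref{eq:strict-HN} is exactly alternative (i) of \Cref{thm:filtration-criteria} at every step $i$, so alternative (ii) is never needed.

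It remains to verify the slope hypothesis \eqref{eq:slope} for each quotient $Q_i=E_i/E_{i-1}$. By the Harder--Narasimhan theorem, each $Q_i$ is semistable. A semistable bundle has trivial Harder--Narasimhan filtration, so $\mu_{\min}^{\HN}(Q_i)=\mu(Q_i)$. The slopes are strictly decreasing, so
\begin{align*}
\mu(Q_i)\geq\mu(Q_\ell)=\mu_{\min}^{\HN}(E)>-\chi(M)
\end{align*}
by \eqref{eq:mumin} and the assumption of the corollary. Thus \eqref{eq:slope} holds for every $1\le i\le\ell$, and \Cref{thm:filtration-criteria} yields that $D$ is surjective.

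There is no genuine obstacle here. The only point needing a remark is the identity $\mu_{\min}^{\HN}(Q)=\mu(Q)$ for semistable $Q$. It follows from uniqueness of the Harder--Narasimhan filtration: the trivial filtration $0\subset Q$ has a semistable quotient, so it is the Harder--Narasimhan filtration. Alternatively, \Cref{prop:quotient-mumin} together with semistability gives $\mu(Q')\ge\mu(Q)$ for every quotient $Q'$ of $Q$, and equality is attained at $Q'=Q$.
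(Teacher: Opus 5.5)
Your proposal is correct and follows essentially the same route as the paper: you apply \Cref{thm:filtration-criteria} to the Harder--Narasimhan filtration, with alternative (i) at every step, and verify $\mu_{\min}^{\HN}(Q_i)=\mu(Q_i)\ge\mu_{\min}^{\HN}(E)>-\chi(M)$. You also spell out two points the paper leaves implicit, namely that $D$ preserves the filtration and that $\mu_{\min}^{\HN}(Q)=\mu(Q)$ for semistable $Q$.
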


\begin{proof}
Condition \eqref{eq:strict-HN} implies that the induced real-linear term $A_i=0$. Moreover,
\begin{align*}
 \mu_{\min}^{\HN}(Q_i)=\mu(Q_i)\ge\mu_{\min}^{\HN}(E)>-\chi(M)
 \ \text{for}\ 1\le i\le\ell.
\end{align*}
Thus all hypotheses of \Cref{thm:filtration-criteria} are satisfied, and $D$ is surjective.
\end{proof}

\begin{example}\label{ex:split-HN-flag}
	Let $M$ be a closed Riemann surface of genus $\gamma$. Choose semistable holomorphic vector bundles $Q_+$ and $Q_-$ of ranks $r_+$ and $r_-$ and degrees $d_+$ and $d_-$, respectively, such that
	\begin{align*}
		\frac{d_+}{r_+}>\frac{d_-}{r_-}>2\gamma-2.
	\end{align*}
	Set $E=Q_+\oplus Q_-$. Then
	\begin{align*}
		0\subset Q_+\subset E,
	\end{align*}
	is the Harder--Narasimhan filtration of $E$ with successive quotients $Q_+$ and $Q_-$, since $Q_+$ and $Q_-$ are semistable and $\mu(Q_+)>\mu(Q_-)$. In particular,
	\begin{align*}
		\mu_{\min}^{\HN}(E)=\mu(Q_-)=\frac{d_-}{r_-}>2\gamma-2=-\chi(M).
	\end{align*}
	
	Choose a nonzero smooth real-linear bundle homomorphism
	\begin{align*}
		\Phi:Q_-\longrightarrow\Lambda^{0,1}T^*M\otimes Q_+,
	\end{align*}
	and define
	\begin{align*}
		A=
		\begin{pmatrix}
			0&\Phi\\
			0&0
		\end{pmatrix},
		\qquad
		D=
		\begin{pmatrix}
			\ddbar_{Q_+}&\Phi\\
			0&\ddbar_{Q_-}
		\end{pmatrix}.
	\end{align*}
	Then $A\neq0$, while
	\begin{align*}
		A(Q_+)=0,
		\qquad
		A(E)\subset\Lambda^{0,1}T^*M\otimes Q_+.
	\end{align*}
	Hence $A$ strictly lowers the Harder--Narasimhan filtration. The induced operators on the successive quotients are simply
	\begin{align*}
		D_1=\ddbar_{Q_+},
		\qquad
		D_2=\ddbar_{Q_-}.
	\end{align*}
	Since
	\begin{align*}
		\mu(Q_+)>\mu(Q_-)>-\chi(M),
	\end{align*}
	$D$ is surjective by \Cref{cor:strict-HN}.
\end{example}

\section{Applications to automatic transversality}
We now translate the abstract surjectivity results into statements for linearized and normal Cauchy--Riemann operators of pseudoholomorphic curves.

Let $u:(M,j)\to(X,J)$ be a pseudoholomorphic curve and let
\begin{align*}
 D_u=\ddbar_u+A_u
\end{align*}
be its linearized operator. The preceding surjectivity criteria can be applied directly to the linearized Cauchy--Riemann operator $D_u$. Indeed, $\ddbar_u$ defines a holomorphic structure on $u^*TX$ over $(M,j)$. If $u$ is immersed, the induced normal operator
\begin{align*}
	D_u^N=\ddbar_u^N+A_u^N
\end{align*}
takes the same form on the normal bundle $N_u$. We therefore obtain the following collection of automatic regularity criteria.

\begin{corollary}\label{cor:Jcurve}
Let $u:(M,j)\to(X,J)$ be a closed pseudoholomorphic curve. Then $u$ is Fredholm regular if any one of the following conditions holds for the real-linear Cauchy--Riemann operator
\begin{align*}
D_u=\ddbar_u+A_u
\end{align*}
on the holomorphic vector bundle $u^*TX$:
\begin{enumerate}[label=\textup{(\roman*)}]
\item the integral inequality of \Cref{thm:L2criterion};
\item the asymptotic Harder--Narasimhan condition of \Cref{cor:HN-A};
\item the quotientwise hypotheses of \Cref{thm:filtration-criteria}.
\end{enumerate}
If, in addition, $u$ is immersed and any one of \textup{(i)--(iii)} holds for the normal operator
\begin{align*}
D_u^N=\ddbar_u^N+A_u^N
\end{align*}
on $N_u$, then $u$ is Fredholm regular.
\end{corollary}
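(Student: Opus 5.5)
The plan is to reduce the statement directly to the abstract surjectivity results already proved, applied to $D_u$ on $u^*TX$ and, in the immersed case, to $D_u^N$ on $N_u$. First I check that both operators have the form $\ddbar_E+A$ required there. For $D_u=\ddbar_u+A_u$, the complex-linear part $\ddbar_u$ is a complex-linear Cauchy--Riemann operator on the complex vector bundle $(u^*TX,J)$. Over a Riemann surface every such operator is integrable, since $(0,2)$-forms vanish, so by the Koszul--Malgrange theorem it defines a holomorphic structure with $\ddbar_E=\ddbar_u$. The anti-linear part $A_u$ is a smooth real-linear bundle homomorphism $u^*TX\to\Lambda^{0,1}T^*M\otimes u^*TX$. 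Hence $D_u$ is a real-linear Cauchy--Riemann operator in the sense of the preliminaries, and \Cref{thm:L2criterion}, \Cref{cor:HN-A} and \Cref{thm:filtration-criteria} apply verbatim under hypotheses (i), (ii) and (iii) respectively.

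Each of these results yields surjectivity of $D_u:W^{1,p}\to L^p$. Here I note that the criteria were proved on smooth sections, or via $\ker D^*=0$. Since $D_u$ is Fredholm with smooth cokernel representatives, and surjectivity is independent of $p$ as recalled in the preliminaries, surjectivity on smooth sections and on Sobolev completions agree. Fredholm regularity of $u$ is by definition surjectivity of $D_u$, which gives the first part.

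For the immersed case, I would first verify that $D_u^N=\ddbar_u^N+A_u^N$ is again of the required form. By \eqref{eq:normal-dbar-quotient}, $\ddbar_u^N$ is the operator induced by $\ddbar_u$ on the quotient $N_u=u^*TX/{\rm d}u(TM)$. Thus it is well defined precisely when ${\rm d}u(TM)$ is $\ddbar_u$-invariant, i.e.\ a holomorphic subbundle. This invariance is the step needing care. I would deduce it from the fact that $D_u$ maps sections of the form ${\rm d}u(Y)$, for $Y$ a vector field on $M$, into $\Omega^{0,1}({\rm d}u(TM))$: this is the standard identity $D_u({\rm d}u\,Y)={\rm d}u(\ddbar Y)$ coming from reparametrization invariance. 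Taking complex-linear parts then shows that ${\rm d}u(TM)$ is $\ddbar_u$-invariant. I would cite Wendl \cite{WendlSuperRigidity} for this identity rather than rederive it; this is the only genuinely geometric input, and the main potential obstacle if one insists on a self-contained argument.

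With the invariance in hand, $\ddbar_u^N$ gives $N_u$ a holomorphic structure and $A_u^N$ is a smooth real-linear homomorphism. The abstract criteria therefore show that $D_u^N$ is surjective. The equivalence of regularity with surjectivity of $D_u^N$ (\cite[Proposition 2.2]{WendlSuperRigidity}) then concludes the proof. Alternatively, one could apply \Cref{prop:filtered} to the filtration $0\subset{\rm d}u(TM)\subset u^*TX$. Since $TM$ has degree $\chi(M)>-\chi(M)$, \Cref{thm:HLSline} gives surjectivity on the line subbundle, and combining this with surjectivity on the quotient $N_u$ shows that $D_u$ itself is onto.
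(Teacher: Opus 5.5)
Your main line is the paper's own argument: regard $\ddbar_u$ (resp.\ $\ddbar_u^N$) as a holomorphic structure on $u^*TX$ (resp.\ $N_u$), apply \Cref{thm:L2criterion}, \Cref{cor:HN-A} or \Cref{thm:filtration-criteria}, and in the immersed case invoke Wendl's equivalence \cite[Proposition 2.2]{WendlSuperRigidity}. Your derivation of the $\ddbar_u$-invariance of ${\rm d}u(TM)$ from $D_u({\rm d}u\,Y)={\rm d}u(\ddbar_{TM} Y)$ is correct, and it makes explicit a step the paper leaves implicit.

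The closing ``alternative'' argument, however, is wrong outside genus zero. The inequality $\deg TM=\chi(M)>-\chi(M)$ holds only when $\chi(M)>0$, i.e.\ $M\cong\mathbb{CP}^1$. For genus $\gamma\geq1$, the operator $\ddbar_{TM}$ has cokernel $H^1(M,TM)\neq0$, of complex dimension $1$ if $\gamma=1$ and $3\gamma-3$ if $\gamma\geq2$. So \Cref{prop:filtered} cannot be applied to $0\subset{\rm d}u(TM)\subset u^*TX$. In fact the snake lemma gives
\begin{align*}
\ker D_u^N\longrightarrow H^1(M,TM)\longrightarrow\coker D_u\longrightarrow\coker D_u^N\longrightarrow0,
\end{align*}
so $D_u$ need not be surjective when $D_u^N$ is. For instance, if $D_u^N$ is an isomorphism, then $\coker D_u\cong H^1(M,TM)$.

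This also exposes a tension in your write-up. You declare Fredholm regularity to be ``by definition'' surjectivity of $D_u$. But Wendl's equivalence with surjectivity of $D_u^N$ concerns regularity of the problem in which the complex structure on $M$ varies in a Teichm\"uller slice, and those extra directions are exactly what absorb $H^1(M,TM)$. Under your literal definition, the immersed part of the corollary would be false in positive genus. You should therefore drop the alternative, or restrict it to $M=\mathbb{CP}^1$. You should also treat surjectivity of $D_u$ as a sufficient condition for regularity in Wendl's sense, which is how the paper's phrasing ``$u$ is Fredholm regular if $D_u$ is surjective'' should be read.
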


\begin{remark}
\textup{(i)} Assume that $(X,J)$ is K\"ahler. Then $A_u=0$, and the hypothesis of \Cref{cor:Jcurve} (i) becomes
\begin{align*}
\int_M\theta_{u^*TX,h,g}\,dV_g>-2\pi\chi(M),
\end{align*}
which agrees with the sufficient condition in \cite[Corollary 1.5]{JiZhu}. Optimizing over $h$ and using \Cref{thm:optimal-curvature} gives the metric-free sufficient condition
\begin{align*}
\mu_{\min}^{\HN}(u^*TX)>-\chi(M).
\end{align*}
For a semistable pullback bundle, this reduces to
\begin{align*}
\frac{\deg(u^*TX)}{\rank_{\mathbb C}(u^*TX)}> -\chi(M),
\end{align*}
and for a line bundle it reduces to the classical degree condition.

\textup{(ii)} Suppose that $\dim_{\mathbb R}X=4$ and $u$ is immersed. Then $N_u$ is a holomorphic line bundle, and the one-step filtration $0\subset N_u$ in \Cref{cor:Jcurve} (iii) yields
\begin{align*}
\deg N_u>-\chi(M)
\quad\Longrightarrow\quad
D_u^N\text{ is surjective}.
\end{align*}
This recovers the closed immersed line-bundle criterion of Hofer--Lizan--Sikorav \cite{HLS}. This rank-one situation has been developed considerably further in the theory of pseudoholomorphic curves. C. Wendl extended automatic transversality in dimension four to punctured curves and to settings that need not be somewhere immersed or injective \cite{WendlAT}. For unbranched covers of closed pseudoholomorphic curves, generic transversality was studied by Gerig--Wendl \cite{GerigWendl}. Wendl later established higher-dimensional transversality and super-rigidity results for multiply covered holomorphic curves \cite{WendlSuperRigidity}.
\end{remark}

We conclude this section by explaining how the examples constructed in previous sections arise as normal operators of embedded pseudoholomorphic curves. We first recall the following realization principle. Let $E\to M$ be a complex vector bundle over a closed Riemann surface and let
\begin{align*}
	D=\ddbar_E+A:C^\infty(M,E)\longrightarrow\Omega^{0,1}(E)
\end{align*}
be a real-linear Cauchy--Riemann operator. By \cite[Proposition 3.15]{DoanWalpuski}, there exists a homogeneous almost complex structure $J_D$ on the total space
\begin{align*}
	\operatorname{Tot}(E)
\end{align*}
such that the operator
\begin{align*}
	\mathcal F_D(\xi):=\frac12\bigl({\rm d}\xi+J_D\circ{\rm d}\xi\circ j\bigr)
\end{align*}
satisfies
\begin{align}\label{eq:realization-section}
	\mathcal F_D(\xi)=D\xi
\end{align}
for every $\xi\in C^\infty(M,E)$. Consequently,
\begin{align*}
	D\xi=0
	\quad\Longleftrightarrow\quad
	\xi:(M,j)\longrightarrow(\operatorname{Tot}(E),J_D)\text{ is pseudoholomorphic}.
\end{align*}
In particular, the zero section
\begin{align*}
	u:M&\longrightarrow \operatorname{Tot}(E),\\
	x&\longmapsto0_x,
\end{align*}
is an embedded $J_D$-holomorphic curve. Along $u$, the vertical tangent bundle identifies canonically with the normal bundle,
\begin{align*}
	N_{u}\cong E.
\end{align*}
Under this identification, vertical variations of $u$ are precisely
sections of $E$. For $\xi\in C^\infty(M,E)$, consider the family of
sections
\begin{align*}
	u_t=t\xi.
\end{align*}
Then $u_0=u$ and $\dot u_0=\xi$ as a normal variation. Linearizing
\eqref{eq:realization-section} at $t=0$ gives
\begin{align*}
	D_u^N\xi=
	\left.\frac{d}{dt}\right|_{t=0}\mathcal F_D(t\xi)=\left.\frac{d}{dt}\right|_{t=0}D(t\xi)=D\xi.
\end{align*}
Hence, under the identification $N_u\cong E$, we have
\begin{align*}
	D_u^N=D.
\end{align*}

Consequently, each real-linear Cauchy--Riemann operator constructed in \Cref{ex:split-curvature}, \Cref{ex:general-HN-curve-extension}, \Cref{ex:complete-line-flag} and \Cref{ex:split-HN-flag} admits a geometric realization as the normal operator of an embedded pseudoholomorphic curve. More precisely, applying the preceding construction to each such operator $D$ produces an embedded pseudoholomorphic zero section $u:M\to X:=\operatorname{Tot}(E)$ satisfying
\begin{align*}
	N_u\cong E,
	\qquad
	D_u^N=D.
\end{align*}

\section{Applications to pseudoholomorphic spheres in \texorpdfstring{$S^6$}{S6}}\label{sec:S6}

This section applies the preceding surjectivity criteria to immersed pseudoholomorphic spheres in $S^6$. We first treat an arbitrary almost complex structure under a transverse automorphism hypothesis and then specialize to the standard octonionic nearly K\"ahler structure, where the relevant holomorphic normal bundle splitting admits a geometric characterization.

Given an almost complex structure $J$ on $S^6$, let
\begin{align*}
	u:\mathbb{CP}^1\longrightarrow (S^6,J)
\end{align*}
be an immersed pseudoholomorphic sphere.
Its normal bundle $N_u$ has complex rank two, and the normal operator has the form
\begin{align*}
	D_u^N=\ddbar_u^N+A_u^N.
\end{align*}
We always regard $N_u$ as a holomorphic vector bundle via the complex-linear part $\ddbar_u^N$.

\begin{proposition}\label{prop:S6-normal-degree}
	For every immersed pseudoholomorphic sphere $u:\mathbb{CP}^1\to(S^6,J)$,
	\begin{align*}
		\deg N_u=-2,
		\qquad
		\mu(N_u)=-1,
	\end{align*}
	and
	\begin{align}\label{eq:S6-normal-index}
		\operatorname{ind}_{\mathbb R}D_u^N=0.
	\end{align}
\end{proposition}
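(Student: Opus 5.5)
The plan is to compute $\deg N_u$ topologically from the exact sequence of complex vector bundles over $\mathbb{CP}^1$
\begin{align*}
	0\longrightarrow T\mathbb{CP}^1\xrightarrow{\ {\rm d}u\ } u^*TS^6\longrightarrow N_u\longrightarrow0 .
\end{align*}
This sequence is available because $u$ is immersed and ${\rm d}u$ is complex-linear, so ${\rm d}u(T\mathbb{CP}^1)$ is a $J$-complex line subbundle. The holomorphic structure on $N_u$ induced by $\ddbar_u^N$ has the same underlying complex vector bundle. Its degree therefore equals the first Chern number $\langle c_1(N_u),[\mathbb{CP}^1]\rangle$, and the degree defined via Chern--Weil agrees with this number. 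Additivity of $c_1$ gives
\begin{align*}
	\deg N_u=\langle c_1(u^*TS^6),[\mathbb{CP}^1]\rangle-\deg T\mathbb{CP}^1=\langle u^*c_1(TS^6,J),[\mathbb{CP}^1]\rangle-2 .
\end{align*}

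The key step is the vanishing of the middle term. It follows because $H^2(S^6;\mathbb Z)=0$, so $c_1(TS^6,J)=0$ for every almost complex structure $J$. Equivalently, $u^*TS^6$ is pulled back along a map $\mathbb{CP}^1=S^2\to S^6$, which is null-homotopic since $\pi_2(S^6)=0$. Hence $u^*TS^6$ is topologically trivial as a complex bundle, and $\deg N_u=-2$. Since $\rank_{\C}N_u=2$, this gives $\mu(N_u)=-1$.

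For the index, use Riemann--Roch for real-linear Cauchy--Riemann operators. The zero-order term $A_u^N$ is compact relative to the principal part, so by homotopy invariance of the Fredholm index
\[
\operatorname{ind}_\R D_u^N=\operatorname{ind}_\R\ddbar_u^N=2\bigl(\deg N_u+\rank_\C N_u(1-\gamma)\bigr).
\]
With $\gamma=0$, $\deg N_u=-2$ and $\rank_\C N_u=2$, this is $2(-2+2)=0$, which is \eqref{eq:S6-normal-index}.

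None of these steps is a real obstacle. The only point needing care is that the holomorphic degree of $N_u$, taken with respect to the structure from $\ddbar_u^N$, coincides with the topological Chern number. This holds because the degree formula in the preliminaries is Chern--Weil, which is independent of the metric and of the chosen holomorphic structure on a fixed complex bundle.
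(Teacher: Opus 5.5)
Your proposal is correct and follows the paper's proof: the exact sequence $0\to T\mathbb{CP}^1\to u^*TS^6\to N_u\to 0$ together with $c_1(TS^6,J)=0$ (from $H^2(S^6;\mathbb Z)=0$) gives $\deg N_u=-2$, and Riemann--Roch for real-linear Cauchy--Riemann operators gives $\operatorname{ind}_{\mathbb R}D_u^N=2\deg N_u+2\chi(\mathbb{CP}^1)=0$. Your extra remarks make explicit two points the paper leaves implicit: that the Chern--Weil degree equals the topological Chern number, and that the zero-order term does not change the index.
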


\begin{proof}
	Since $H^2(S^6;\mathbb Z)=0$, one has $c_1(TS^6,J)=0$. The immersion gives an exact sequence of complex vector bundles
	\begin{align*}
		0\longrightarrow T\mathbb{CP}^1\xrightarrow{\,{\rm d}u\,}u^*TS^6\longrightarrow N_u\longrightarrow0.
	\end{align*}
	Hence
	\begin{align*}
		c_1(N_u)=u^*c_1(TS^6,J)-c_1(T\mathbb{CP}^1)=-c_1(T\mathbb{CP}^1),
	\end{align*}
	which implies
	\begin{align*}
		\deg N_u=-2\ \ \text{and}\ \ \mu(N_u)=-1.
	\end{align*}
	The Riemann--Roch formula for a real-linear Cauchy--Riemann operator on the rank-two bundle $N_u$ then yields
	\begin{equation*}
		\operatorname{ind}_{\mathbb R}D_u^N
		=2\deg N_u+2\chi(\mathbb{CP}^1)
		=-4+4=0.
		\tag*{\qedhere}
	\end{equation*}
\end{proof}
	
The Birkhoff--Grothendieck theorem \cite{Grothendieck} gives
\begin{align*}
	N_u\cong\mathcal O(a)\oplus\mathcal O(b),
	\qquad
	a\geq b,
\end{align*}
where $a+b=-2$ by \Cref{prop:S6-normal-degree}.

\begin{theorem}[=\Cref{thm:S6-transverse-symmetry'}]\label{thm:S6-transverse-symmetry}
	Let $J$ be an almost complex structure on $S^6$, and let $u:\mathbb{CP}^1\to(S^6,J)$ be an immersed pseudoholomorphic sphere satisfying
	\begin{align}\label{eq:S6-splitting1}
		N_u\cong\mathcal O(-1)\oplus\mathcal O(-1).
	\end{align}
	Suppose that there exists $z_0\in\mathbb{CP}^1$ such that
	\begin{align}\label{eq:S6-transverse-orbit}
		T_{u(z_0)}\bigl(\operatorname{Aut}(S^6,J)\cdot u(z_0)\bigr)\not\subseteq {\rm d}u_{z_0}(T_{z_0}\mathbb{CP}^1).
	\end{align}
	Let $h$ be a Hermitian--Einstein metric on $N_u$. Then
	\begin{enumerate}[label=\textup{(\roman*)}]
		\item The anti-complex-linear part satisfies
		\begin{align}\label{eq:S6-transverse-energy}
			\int_{\mathbb{CP}^1}|A_u^N|_{g,h}^2\,dV_g\geq2\pi.
		\end{align}
		In particular, $J$ cannot be integrable.
		\item There is no holomorphic line subbundle $L\cong\mathcal O(-1)\subset N_u$ that is preserved by $A_u^N$.
	\end{enumerate}
\end{theorem}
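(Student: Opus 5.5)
The plan is to argue by contradiction in both parts, playing surjectivity (from the criteria above) against the existence of a nontrivial kernel element produced by the symmetry hypothesis \eqref{eq:S6-transverse-orbit}. By \Cref{prop:S6-normal-degree}, $\operatorname{ind}_{\mathbb R}D_u^N=0$, so surjectivity of $D_u^N$ is equivalent to injectivity. First I will show that $D_u^N$ has nonzero kernel. Take a one-parameter subgroup $\varphi_t\subset\operatorname{Aut}(S^6,J)$ whose generating vector field $V$ satisfies $V(u(z_0))\notin{\rm d}u_{z_0}(T_{z_0}\mathbb{CP}^1)$. Such a subgroup exists because $\operatorname{Aut}(S^6,J)$ is a Lie group (closed in the diffeomorphism group, since it preserves the $G$-structure) and the tangent space of the orbit is spanned by the values of its Killing-type fields. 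Each $\varphi_t\circ u$ is $J$-holomorphic, so $\xi:=V\circ u\in\ker D_u$. Its normal projection $\pi_N\xi$ lies in $\ker D_u^N$, since $D_u^N\pi_N=(\operatorname{id}\otimes\pi_N)D_u$. Moreover $\pi_N\xi(z_0)\neq0$. Hence $\ker D_u^N\neq0$, and therefore $D_u^N$ is not surjective.

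For part (i), equip $\mathbb{CP}^1$ with the Fubini--Study metric $g$ and $N_u\cong\mathcal O(-1)\oplus\mathcal O(-1)$ with the Hermitian--Einstein metric $h$. Then $\ii\Lambda_gR^{N_u,h}=\frac{2\pi\mu(N_u)}{\Vol_g}\Id=-\frac{2\pi}{\Vol_g}\Id$, so
\begin{align*}
\int\theta_{N_u,h,g}\,dV_g=-2\pi,
\end{align*}
exactly as in \Cref{ex:split-curvature}. If $\int|A_u^N|^2_{g,h}\,dV_g<2\pi=-2\pi+2\pi\chi(\mathbb{CP}^1)$, then \Cref{thm:L2criterion} would make $D_u^N$ surjective, which contradicts the first step. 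This gives \eqref{eq:S6-transverse-energy}. If $J$ were integrable, then $A_u=0$ and hence $A_u^N=0$, which violates the bound. I should also check that $h$ is unique up to scale (polystable bundle), or note that the argument works for any Hermitian--Einstein metric, since the integral of $\theta$ is fixed by Chern--Weil.

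For part (ii), suppose $L\cong\mathcal O(-1)\subset N_u$ is a holomorphic line subbundle preserved by $A_u^N$. Then $D_u^N$ preserves the filtration $0\subset L\subset N_u$. The quotient is $N_u/L$, a holomorphic line bundle of degree $-2-(-1)=-1$. Both quotients are line bundles of degree $-1>-2=-\chi(\mathbb{CP}^1)$, so alternative (ii) of \Cref{thm:filtration-criteria} applies at each step. This yields surjectivity of $D_u^N$ and again contradicts the nonzero kernel.

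I expect the main obstacle to be the first step, namely justifying that the infinitesimal automorphism gives an element of $\ker D_u$ with nonzero normal projection. Two points need care. One must produce a smooth one-parameter family realizing the given transverse tangent vector of the orbit; the cleanest route is to cite that automorphism groups of almost complex structures on compact manifolds are Lie groups, so the orbit's tangent space at $u(z_0)$ is $\{V(u(z_0))\}$ over the Lie algebra. One must also use that $\frac{d}{dt}\big|_{t=0}\ddbar_J(\varphi_t\circ u)=D_u(V\circ u)$. The rest consists of direct applications of earlier results.
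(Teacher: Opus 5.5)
Your proposal is correct and follows the paper's proof essentially step for step: the infinitesimal automorphism gives a nonzero element of $\ker D_u^N$, the index-zero computation of \Cref{prop:S6-normal-degree} turns this into a nonzero cokernel, and parts (i) and (ii) then contradict \Cref{thm:L2criterion} (via $\int\theta_{N_u,h,g}\,dV_g=-2\pi$) and the line-bundle alternative of \Cref{thm:filtration-criteria} for the filtration $0\subset L\subset N_u$, respectively. One point to tidy: your parenthetical reason for $\operatorname{Aut}(S^6,J)$ being a Lie group is not the right one, since preserving a $GL(3,\C)$-structure (an infinite-type structure) does not by itself give a Lie group; the correct justification is the theorem that the automorphism group of a compact almost complex manifold is a Lie transformation group (Boothby--Kobayashi--Wang), which the paper uses implicitly without comment.
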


\begin{proof}
	Hypothesis \eqref{eq:S6-transverse-orbit} yields an element
	$X\in\mathfrak{aut}(S^6,J)$ whose fundamental vector field $X^\sharp$ satisfies
	\begin{align}\label{eq:S6-X-transverse}
		X^\sharp(u(z_0))
		\notin {\rm d}u_{z_0}(T_{z_0}\mathbb{CP}^1),
	\end{align}
	where $\mathfrak{aut}(S^6,J)$ denotes the Lie algebra of the pseudoholomorphic automorphism group $\operatorname{Aut}(S^6,J)$. Hence, the family
	\begin{align*}
		u_t=\exp(tX)\circ u
	\end{align*}
	consists of pseudoholomorphic maps. Differentiating the equation $\ddbar_Ju_t=0$ at $t=0$ gives
	\begin{align*}
		X^\sharp\circ u\in\ker D_u.
	\end{align*}
	Passing to the normal quotient therefore yields
	\begin{align*}
		\sigma_{X}:=\pi_N(X^\sharp\circ u)\in\ker D_u^N,
	\end{align*}
	where $\pi_N:u^*TS^6\rightarrow N_u$ is the quotient map. Moreover, \eqref{eq:S6-X-transverse} implies $\sigma_X(z_0)\neq0$, so $\sigma_X$ is a nonzero element of $\ker D_u^N$. In view of \eqref{eq:S6-normal-index},
	\begin{align*}
		\dim_{\mathbb R}\operatorname{coker}D_u^N=\dim_{\mathbb R}\ker D_u^N\geq1.
	\end{align*}
	Thus $D_u^N$ is not surjective, i.e., $u$ is not Fredholm regular.
	
	We first prove (i). Since $h$ is Hermitian--Einstein, the contracted Chern curvature is scalar, and the Chern--Weil formula gives
	\begin{align*}
		\int_{\mathbb{CP}^1}\theta_{N_u,h,g}\,dV_g=-2\pi.
	\end{align*}
	If the integral in \eqref{eq:S6-transverse-energy} were strictly smaller than $2\pi$, then, since $\chi(\mathbb{CP}^1)=2$,
	\begin{align*}
		\int_{\mathbb{CP}^1}\theta_{N_u,h,g}\,dV_g+2\pi\chi(\mathbb{CP}^1)=2\pi>\int_{\mathbb{CP}^1}|A_u^N|_{g,h}^2\,dV_g.
	\end{align*}
	By \Cref{cor:Jcurve} (i), $u$ is Fredholm regular, contradicting the preceding paragraph. This proves \eqref{eq:S6-transverse-energy}. Since integrability of $J$ would imply $A_u^N\equiv0$, it also follows that $J$ is not integrable.
	
	For (ii), suppose that $A_u^N$ preserves a holomorphic line subbundle $L\cong\mathcal O(-1)\subset N_u$. Then $N_u/L$ is a holomorphic line bundle and
	\begin{align*}
		\deg(N_u/L)=\deg N_u-\deg L=-1.
	\end{align*}
	Thus the holomorphic filtration
	\begin{align}\label{eq:fil}
		0\subset L\subset N_u
	\end{align}
	has successive quotients of degree $-1$. In particular,
	\begin{align*}
		\mu_{\min}^{\HN}(L)=\mu_{\min}^{\HN}(N_u/L)=-1>-2=-\chi(\mathbb{CP}^1).
	\end{align*}
	Since $L$ is holomorphic and $A_u^N$ preserves $L$, the operator $D_u^N$ preserves the filtration \eqref{eq:fil}. Therefore \Cref{cor:Jcurve} (iii) implies that $u$ is Fredholm regular, again a contradiction. Hence no such $A_u^N$-invariant holomorphic line subbundle exists.
\end{proof}

\begin{remark}
	The argument is not specific to $S^6$. More generally, let $(X,J)$ be a compact almost complex manifold of real dimension $6$, and let $u:\mathbb{CP}^1\to(X,J)$ be an immersed pseudoholomorphic sphere satisfying
	\begin{align*}
		\left\langle c_1(TX,J),u_*[\mathbb{CP}^1]\right\rangle=0.
	\end{align*}
	Then $\deg N_u=-2$ and $\operatorname{ind}_{\mathbb R}D_u^N=0$, so after applying the Birkhoff--Grothendieck theorem, the argument of \Cref{thm:S6-transverse-symmetry} apply verbatim, with $\operatorname{Aut}(S^6,J)$ replaced by $\operatorname{Aut}(X,J)$.
\end{remark}

We now specialize to the standard almost complex structure $J_0$ on $S^6\subset\operatorname{Im}\mathbb O$, defined by
\begin{align*}
	(J_0)_x(v)=x\times v,\qquad x\in S^6,\quad v\in T_xS^6=x^\perp,
\end{align*}
where $\times$ is the octonionic cross product. Together with the round metric $g_0$, this is the \textit{standard nearly K\"ahler structure} on $S^6$ (see \cite{Gray1966,FernandezS6}). The standard nearly K\"ahler structure $(g_0,J_0)$ admits a unique
metric connection $\nabla^c$ preserving $J_0$ and having totally
skew-symmetric torsion $T^c$. Thus
\begin{align*}
	\nabla^c g_0=0,
	\qquad
	\nabla^c J_0=0.
\end{align*}

Let $u:\mathbb{CP}^1\to(S^6,J_0)$ be an immersed pseudoholomorphic sphere. Since $\nabla^cJ_0=0$, the complexification of $\nabla^c$ preserves $T^{1,0}S^6$, and its pullback along $u$ defines a connection, again denoted by $\nabla^c$, on $u^*T^{1,0}S^6$. We denote by $(\nabla^c)^{0,1}$ its $(0,1)$-part with respect to the complex structure on $\mathbb{CP}^1$. We shall identify the holomorphic structure defined by the complex-linear part of the normal operator with the quotient holomorphic structure appearing in Bryant's holomorphic moving-frame construction \cite[Section 4]{Bryant}.

\begin{lemma}\label{lem:S6-normal}
	Let $u:\mathbb{CP}^1\to (S^6,J_0)$ be an immersed pseudoholomorphic sphere. Then the natural complex-bundle isomorphism
	\begin{align}\label{eq:iso}
		(N_u,J_0)\cong u^*T^{1,0}S^6/{\rm d}u(T^{1,0}\mathbb{CP}^1)
	\end{align}
	identifies the holomorphic structure defined by $\ddbar_u^N$ with the quotient holomorphic structure induced by $(\nabla^c)^{0,1}$.
\end{lemma}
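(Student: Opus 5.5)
The plan is to use the connection formula \eqref{eq:linearization-J-connection} with $\nabla=\nabla^c$. Then
\begin{align*}
D_u\xi(Y)=\tfrac12\bigl(\nabla^c_Y\xi+J_0\nabla^c_{jY}\xi\bigr)+\tfrac12\bigl(T^c(\xi,U)+J_0T^c(\xi,J_0U)\bigr),\qquad U={\rm d}u(Y).
\end{align*}
By the remark after \eqref{eq:linearization-J-connection}, it suffices to show that the torsion term is anti-complex-linear in $\xi$. Once that is known, $\ddbar_u=(\nabla^c)^{0,1}$ on $(u^*TS^6,J_0)$. Passing to the quotient via \eqref{eq:normal-dbar-quotient} then identifies $\ddbar_u^N$ with the operator that $(\nabla^c)^{0,1}$ induces on $u^*TS^6/{\rm d}u(TM)$.

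For the torsion term, I will use the standard nearly K\"ahler identity for the characteristic connection, $T^c(X,Y)=-J_0(\nabla^{LC}_XJ_0)Y$, up to a sign convention. I will also use the nearly K\"ahler identity $(\nabla^{LC}_XJ_0)J_0Y=-J_0(\nabla^{LC}_XJ_0)Y$. Together with the skew-symmetry of $\nabla^{LC}J_0$ in its two arguments, these show that $T^c$ is anti-complex-linear in each argument: $T^c(J_0X,Y)=T^c(X,J_0Y)=-J_0T^c(X,Y)$. Hence $\xi\mapsto T^c(\xi,U)+J_0T^c(\xi,J_0U)$ is anti-linear, since each summand is. Concretely, $T^c(J_0\xi,U)=-J_0T^c(\xi,U)$ and $J_0T^c(J_0\xi,J_0U)=-J_0\,J_0T^c(\xi,J_0U)$. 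The verification of these identities, including the sign bookkeeping between the conventions for $T^c$, is the main point to check carefully. I would confirm them directly from the octonionic formula $J_0v=x\times v$ if the literature conventions are ambiguous.

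Next comes the translation to $T^{1,0}$. Under $v\mapsto\frac12(v-\ii J_0v)$, the complex bundle $(u^*TS^6,J_0)$ is isomorphic to $u^*T^{1,0}S^6$, and this isomorphism intertwines $\nabla^c$ with its complexification, because $\nabla^cJ_0=0$. It maps ${\rm d}u(TM)$ onto ${\rm d}u(T^{1,0}\mathbb{CP}^1)$, since ${\rm d}u\circ j=J_0\circ{\rm d}u$. Taking $(0,1)$-parts with respect to $j$ shows that $\ddbar_u$ corresponds to $(\nabla^c)^{0,1}$.

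Finally, for the quotient: ${\rm d}u(T^{1,0}\mathbb{CP}^1)$ is a holomorphic subbundle for $(\nabla^c)^{0,1}$, because it is $\ddbar_u$-invariant. Indeed, reparametrizations give ${\rm d}u(Y)\in\ker D_u$ for holomorphic vector fields $Y$, and this holds locally too. More directly, ${\rm d}u$ is a holomorphic bundle map $T\mathbb{CP}^1\to u^*TS^6$, which follows from differentiating the equation $\ddbar_{J_0}u=0$. Hence $(\nabla^c)^{0,1}$ descends to a quotient holomorphic structure. By \eqref{eq:normal-dbar-quotient}, this quotient structure coincides with $\ddbar_u^N$ under \eqref{eq:iso}.
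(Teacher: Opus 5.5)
Your proposal is correct and follows the paper's proof essentially step by step. You apply \eqref{eq:linearization-J-connection} with $\nabla^c$ and show the torsion term is anti-complex-linear, so that $\ddbar_u=(\nabla^c)^{0,1}$; you descend via \eqref{eq:normal-dbar-quotient}; and you transport everything through the $\nabla^c$-parallel map $v\mapsto\frac12(v-\ii J_0v)$. The only difference is that you derive the anti-linearity of $T^c$ directly from the nearly K\"ahler identities, while the paper quotes (Friedrich--Ivanov) that $T^c$ is a constant multiple of the Nijenhuis tensor $N_{J_0}$; either justification works.
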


\begin{proof}
	We first identify the complex-linear part of the linearized Cauchy--Riemann operator. Since $\nabla^cJ_0=0$, we may apply \eqref{eq:linearization-J-connection}. Its torsion $T^c$ is a nonzero constant multiple of the Nijenhuis tensor $N_{J_0}$ \cite[Corollary 10.3]{FriedrichIvanov}. Since
	\begin{align*}
		N_{J_0}(J_0\xi,U)=N_{J_0}(\xi,J_0U)=-J_0N_{J_0}(\xi,U),
	\end{align*}
	the torsion term in \eqref{eq:linearization-J-connection} is anti-complex-linear in $\xi$. Hence
	\begin{align}\label{eq:S6-pullback-dbar}
		\ddbar_u=(\nabla^c)^{0,1}.
	\end{align}
	Combining \eqref{eq:S6-pullback-dbar} with \eqref{eq:normal-dbar-quotient} gives
	\begin{align}\label{eq:equ}
		\ddbar_u^N\pi_N=(\operatorname{id}\otimes\pi_N)(\nabla^c)^{0,1},
	\end{align}
	where $\pi_N:u^*TS^6\to N_u$ is the quotient map.
	
	It remains to identify the quotient holomorphic structure. The complex-linear map
	\begin{align*}
		P^{1,0}:(u^*TS^6,J_0)&\longrightarrow u^*T^{1,0}S^6,\\
		v&\longmapsto \frac12\bigl(v-\sqrt{-1}J_0v\bigr)
	\end{align*}
	identifies $(u^*TS^6,J_0)$ with $u^*T^{1,0}S^6$ and, since $u$ is pseudoholomorphic, maps ${\rm d}u(T\mathbb{CP}^1)$ onto ${\rm d}u(T^{1,0}\mathbb{CP}^1)$. It therefore descends to the natural complex-bundle isomorphism \eqref{eq:iso}. Moreover, the map $P^{1,0}$ is parallel with respect to $\nabla^c$ and hence intertwines the induced $(0,1)$-operators. In view of \eqref{eq:S6-pullback-dbar} and \eqref{eq:equ}, the descended map therefore intertwines $\ddbar_u^N$ with the quotient Cauchy--Riemann operator induced by $(\nabla^c)^{0,1}$. Hence \eqref{eq:iso} is an isomorphism of holomorphic vector bundles.
\end{proof}

The condition \eqref{eq:S6-splitting1} has a concrete geometric meaning for the standard nearly K\"ahler structure.

\begin{proposition}\label{prop:S6-totally-geodesic}
	Let $u:\mathbb{CP}^1\to(S^6,J_0)$ be an immersed pseudoholomorphic sphere. Then the following are equivalent:
	\begin{enumerate}[label=\textup{(\roman*)}]
		\item $u$ is totally geodesic with respect to the round metric;
		\item the holomorphic normal bundle $N_u$ satisfies
		\begin{align*}
			N_u\cong\mathcal O(-1)\oplus\mathcal O(-1).
		\end{align*}
	\end{enumerate}
\end{proposition}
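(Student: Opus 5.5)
The plan is to combine the Bryant moving-frame description of pseudoholomorphic curves in $(S^6,J_0)$ with \Cref{lem:S6-normal}. By the lemma, $N_u$ is holomorphically $u^*T^{1,0}S^6/{\rm d}u(T^{1,0}\mathbb{CP}^1)$ with the quotient structure induced by $(\nabla^c)^{0,1}$. This is exactly the setting of Bryant's construction. There the second fundamental form of $u$, taken in the $(\nabla^c)$-holomorphic sense, is a holomorphic bundle map
\begin{align*}
	\mathrm{II}:T^{1,0}\mathbb{CP}^1\longrightarrow \operatorname{Hom}\bigl(T^{1,0}\mathbb{CP}^1,N_u\bigr).
\end{align*}
Equivalently, it is a holomorphic section $\mathrm{II}\in H^0\bigl(\mathbb{CP}^1,K_{\mathbb{CP}^1}^{2}\otimes N_u\bigr)$. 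Moreover, for $J_0$-holomorphic curves the round-metric second fundamental form is determined by this complex second fundamental form: its $(1,1)$-part vanishes by the nearly Kähler identity, and its $(2,0)$-part is $\mathrm{II}$.

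\emph{Step 1: totally geodesic implies the splitting.} Suppose $u$ is totally geodesic. Then its image is a totally geodesic $S^2=S^6\cap V$ for a $J_0$-invariant associative $3$-plane $V\subset\operatorname{Im}\mathbb O$. By $G_2$-homogeneity we may take $V$ to be the standard one, and then compute $N_u$ directly. The normal space at $x$ is $V^\perp\cong\R^4$, and it carries a $J_0$-action. I expect $N_u$ to be the pullback of a homogeneous $SO(3)$-bundle, which is the spinor-type bundle $\mathcal O(-1)\otimes\C^2$.

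An alternative route avoids the explicit computation. If $\mathrm{II}\equiv0$, then $\nabla^c$ preserves ${\rm d}u(T\mathbb{CP}^1)$, and $N_u$ inherits a metric connection whose curvature can be computed from the curvature of $\nabla^c$ on $S^6$. That curvature is of constant type, and it should give a Hermitian--Einstein metric with $\ii\Lambda R$ scalar. A vector bundle on $\mathbb{CP}^1$ admitting a Hermitian--Einstein metric is polystable. By Birkhoff--Grothendieck together with $\deg N_u=-2$ from \Cref{prop:S6-normal-degree}, it is then $\mathcal O(-1)^{\oplus2}$.

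\emph{Step 2: the splitting implies totally geodesic.} Assume $N_u\cong\mathcal O(-1)\oplus\mathcal O(-1)$. Then
\begin{align*}
	K^2\otimes N_u\cong\mathcal O(-4)\otimes N_u\cong\mathcal O(-5)^{\oplus2},
\end{align*}
which has no nonzero holomorphic sections. Hence $\mathrm{II}=0$, and so the round-metric second fundamental form vanishes, which means $u$ is totally geodesic.

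The main obstacle is justifying rigorously that the complex second fundamental form is a holomorphic section of $K^2\otimes N_u$ for the quotient structure of \Cref{lem:S6-normal}, and that its vanishing is equivalent to total geodesy. I would first try to import this directly from Bryant \cite[Section 4]{Bryant}, translating his structure equations via \Cref{lem:S6-normal}. Failing that, I would verify $\ddbar$-closedness by hand from $\nabla^cJ_0=0$ and the total skew-symmetry of $T^c$, using the fact that $T^c(U,JU)$ is normal. In that check, the torsion contributes only to the anti-linear part $A_u^N$, not to $\ddbar_u^N$.
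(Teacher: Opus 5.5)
Your argument for (ii)$\Rightarrow$(i) is the same as the paper's. Bryant's holomorphic second fundamental form is a section of $K^2\otimes N_u\cong\mathcal O(-5)^{\oplus2}$, so it vanishes, and its vanishing is equivalent to total geodesy \cite[Lemmas 4.3, 4.4]{Bryant}.

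The gap is in (i)$\Rightarrow$(ii). Both of your routes stop exactly where the content lies: identifying the holomorphic structure $\ddbar_u^N$ along the standard associative equator. In the first route you only say you \emph{expect} $\mathcal O(-1)\otimes\C^2$, and the symmetry you name would not decide it. The $SO(3)\subset G_2$ of automorphisms of $\mathbb H$, $a+b\varepsilon\mapsto pa\bar p+(pb\bar p)\varepsilon$, acts on $N_x$ with isotropy weights $0$ and $-2$ (tangent weight $2$). Both $\mathcal O\oplus\mathcal O(-2)$ and $\mathcal O(-1)^{\oplus2}$ are $SO(3)$-equivariant holomorphic bundles with these weights. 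Homogeneity becomes decisive only for the lift $a+b\varepsilon\mapsto pa\bar p+(b\bar p)\varepsilon$ of $Sp(1)$. Its isotropy circle acts on $N_x$ with the single weight $-1$, so the invariant holomorphic structure is unique, though you would still need to check that $\ddbar_u^N$ is invariant. The paper instead computes directly: $\nabla^c_X(a\varepsilon)=(X(a)-\tfrac12aXx)\varepsilon$. On the $\ii$-eigenbundle $\{B\in M_2(\C): Bx=\ii B\}$ this gives $\ddbar_u^N\sigma=\ddbar\sigma$, and the holomorphic frames $e_j(1,z)$ have transition function $z$.

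Your second route is genuinely different and can be made to work, but its key sentence is unsupported. ``Constant type'' constrains $\abs{(\nabla_XJ_0)Y}$; it does not by itself imply that $R^c(X,J_0X)$ acts as a scalar on the normal plane. To complete it, proceed as follows.
\begin{enumerate}[label=\textup{(\arabic*)}]
\item By \Cref{lem:S6-normal}, the orthogonal projection of $\nabla^c$ onto $N_u\cong{\rm d}u(T\mathbb{CP}^1)^\perp$ is $J_0$-linear, unitary for $h:=g_0|_{N_u}$, and has $(0,1)$-part $\ddbar_u^N$. Hence it is the Chern connection of $(N_u,h)$.
\item For totally geodesic $u$ the tangent line is $\nabla^c$-parallel, so this curvature is $R^c|_{N_u}$.
\item The elements $a+b\varepsilon\mapsto a+(qb)\varepsilon$ of $G_2$ fix the equator pointwise and preserve $g_0$, $J_0$ and $\nabla^c$. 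They act on $N_x\cong\mathbb H$ by left multiplication, which commutes with $I_xa=ax$ and is irreducible on $N_x\cong\C^2$. Hence $\ii\Lambda_gR^{N_u,h}$ is pointwise scalar. Directly, the formula above gives $R^{N_u,h}(X,J_0X)=\tfrac12I_x$ for unit $X$.
\item Pointwise scalarity suffices. It gives $\int_{\mathbb{CP}^1}\theta_{N_u,h,g}\,dV_g=\pi\deg N_u=-2\pi$, so \Cref{lem:HN-upper} yields $\mu_{\min}^{\HN}(N_u)\ge-1$. With $a\ge b$ and $a+b=-2$, this forces $a=b=-1$, with no appeal to polystability.
\end{enumerate}
This route reads off the splitting type from curvature via the paper's own slope inequality, rather than from explicit holomorphic frames. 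It still needs one explicit computation or symmetry argument on the equator, and that is precisely what your proposal omits.
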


\begin{proof}
	$(i)\Rightarrow(ii)$. Assume that $u$ is totally geodesic. Then its image is a great $2$-sphere $P\cap S^6$ for some three-plane $P\subset\operatorname{Im}\mathbb O$. Since $u$ is pseudoholomorphic, $P$ is associative. By the transitivity of the $G_2$-action on associative three-planes \cite[\S IV.1.A, Theorem 1.8]{HarveyLawson}, after applying an element of $G_2$ and a biholomorphic reparametrization of $\mathbb{CP}^1$, we may assume that
	\begin{align*}
		u:S^2=S^6\cap\operatorname{Im}\mathbb H\hookrightarrow S^6
	\end{align*}
	is the standard associative equator.
	
	Write
	\begin{align*}
		\mathbb O=\mathbb H\oplus\mathbb H\varepsilon,
		\qquad
		(a+b\varepsilon)(c+d\varepsilon)
		=
		(ac-\overline d\,b)+(da+b\overline c)\varepsilon.
	\end{align*}
	Using the round metric, we identify $N_u$ with the orthogonal normal bundle, which along $S^2$ is the fixed real summand $\mathbb H\varepsilon$. Thus a normal section can be written as $\eta=a\varepsilon$, where $a:S^2\to\mathbb H$. Since
	\begin{align*}
		J_{0,x}(a\varepsilon)
		=
		x(a\varepsilon)
		=
		(ax)\varepsilon,
	\end{align*}
	the complex structure on the coefficient space $\mathbb H$ is
	\begin{align*}
		I_xa=ax.
	\end{align*}
	
	Let $\nabla^{g_0}$ denote the Levi--Civita connection of the round metric. Then
	\begin{align*}
		\nabla_X^cY=\nabla_X^{g_0}Y-\frac12J_0(\nabla_X^{g_0}J_0)Y.
	\end{align*}
	For $X\in T_xS^2$ and $\eta=a\varepsilon$, one has
	\begin{align*}
		\nabla_X^{g_0}(a\varepsilon)=X(a)\varepsilon,
		\qquad
		(\nabla_X^{g_0}J_0)(a\varepsilon)
		=
		X\times(a\varepsilon)
		=
		(aX)\varepsilon.
	\end{align*}
	Consequently,
	\begin{align}\label{eq:characteristic-explicit}
		\nabla_X^c(a\varepsilon)
		=
		\left(X(a)-\frac12aXx\right)\varepsilon.
	\end{align}
	In particular, $\nabla^c$ preserves the normal summand $\mathbb H\varepsilon$ along the associative equator.
	
	Choose a local conformal coordinate $z=s+\sqrt{-1}t$ with $j\partial_s=\partial_t$, and write $x=u(s,t)$. Pseudoholomorphicity gives
	\begin{align*}
		x_t=xx_s,
	\end{align*}
	and hence, since $x^2=-1$,
	\begin{align*}
		x_sx=-x_t,
		\qquad
		x_tx=x_s.
	\end{align*}
	Applying \eqref{eq:characteristic-explicit} to $X=x_s$ and $X=x_t$ gives
	\begin{align*}
		\nabla_{\partial_s}^c(a\varepsilon)=\left(a_s+\frac12ax_t\right)\varepsilon,\qquad
		\nabla_{\partial_t}^c(a\varepsilon)=\left(a_t-\frac12ax_s\right)\varepsilon.
	\end{align*}
	By \Cref{lem:S6-normal}, $\ddbar_u^N$ is the quotient Cauchy--Riemann operator induced by $(\nabla^c)^{0,1}$. Since the expressions above are already normal, we obtain
	\begin{align}\label{eq:normal-explicit}
		\ddbar_u^N(a\varepsilon)(\partial_s)=\frac12\left(a_s+\frac12ax_t+\left(a_t-\frac12ax_s\right)x\right)\varepsilon=\frac12\left(a_s+a_tx+ax_t\right)\varepsilon.
	\end{align}
	
	We now identify the resulting holomorphic bundle explicitly. Using the standard matrix representation of $\mathbb H$, its complexification gives an identification
	\begin{align*}
		\mathbb H\otimes_{\mathbb R}\mathbb C\cong M_2(\mathbb C),
	\end{align*}
	under which imaginary quaternions are represented by traceless skew-Hermitian matrices. We use the same symbol $x$ for the matrix corresponding to $x\in S^2$. The $\sqrt{-1}$-eigenspace of the complexification of $I_x$ is
	\begin{align*}
		N_{u,x}^{1,0}=\left\{B\in M_2(\mathbb C)\ |\ Bx=\sqrt{-1}B\right\},
	\end{align*}
	where we use the natural identification $N_u^{\mathbb C}\cong S^2\times M_2(\mathbb C)\varepsilon\cong S^2\times M_2(\mathbb C)$. For a section $\sigma$ of the bundle $N_u^{1,0}$, differentiating $\sigma x=\sqrt{-1}\sigma$ with respect to $t$ gives 
	\begin{align*} 
		\sigma_tx+\sigma x_t=\sqrt{-1}\sigma_t. 
	\end{align*} 
	Hence \eqref{eq:normal-explicit} reduces to $\ddbar_u^N\sigma(\partial_s)=\frac12(\sigma_s+\sqrt{-1}\sigma_t) =\partial_{\overline z}\sigma$, i.e.,
	\begin{align}\label{eq:normal-standard} 
		\ddbar_u^N\sigma= \overline{\partial}\sigma. 
	\end{align}
	
	Under the above matrix realization, the stereographic coordinate $z$ on $U_0=\mathbb{CP}^1\setminus\{\infty\}$ gives
	\begin{align*} 
		x(z)= \frac{\sqrt{-1}}{1+|z|^2} 
		\begin{pmatrix} 
			1-|z|^2 & 2z\\ 
			2\overline z & |z|^2-1 
		\end{pmatrix}. 
	\end{align*} 
	The row vector
	\begin{align*}
		r_0(z)=(1,z)
	\end{align*}
	satisfies $r_0(z)x(z)=\sqrt{-1}\,r_0(z)$ and spans the $\sqrt{-1}$ left eigenspace of $x(z)$. Thus, if $e_1,e_2$ denote the standard column basis of $\mathbb C^2$, the sections
	\begin{align*}
		\sigma_0^{(j)}=e_jr_0,
		\quad j=1,2,
	\end{align*}
	form a local $\ddbar_u^N$-holomorphic frame by \eqref{eq:normal-standard}, since $r_0$ depends holomorphically on $z$. On $U_1=\mathbb{CP}^1\setminus\{0\}$, with $w=z^{-1}$, the same argument applied to 
	\begin{align*} 
		r_1(w)=(w,1), 
		\qquad 
		\sigma_1^{(j)}=e_jr_1,\quad j=1,2,
	\end{align*}
	gives a $\ddbar_u^N$-holomorphic frame. On $U_0\cap U_1$, 
	\begin{align*} 
		r_0(z)=z\,r_1(w), 
	\end{align*} 
	and therefore 
	\begin{align*} 
		\sigma_0^{(j)}=z\sigma_1^{(j)}, 
		\qquad j=1,2. 
	\end{align*} 
	This is the standard frame transition of the tautological line bundle $\mathcal O(-1)$ in each factor. Hence 
	\begin{align*} 
		(N_u,\ddbar_u^N) \cong\mathcal O(-1)\oplus\mathcal O(-1) 
	\end{align*} 
	as holomorphic vector bundles.
	
	(ii)$\Rightarrow$(i). Suppose that
	\begin{align*}
		N_u\cong\mathcal O(-1)\oplus\mathcal O(-1).
	\end{align*}
	By Bryant \cite[Lemma 4.3]{Bryant}, the second fundamental form determines a holomorphic section
	\begin{align*}
		\Phi_{\mathrm{II}}
		\in
		H^0\!\left(\mathbb{CP}^1,\operatorname{Hom}(T^{1,0}\mathbb{CP}^1\otimes T^{1,0}\mathbb{CP}^1,u^*T^{1,0}S^6/{\rm d}u(T^{1,0}\mathbb{CP}^1))\right),
	\end{align*}
	and $\Phi_{\mathrm{II}}\equiv0$ if and only if $u$ is totally geodesic \cite[Lemma 4.4]{Bryant}. It follows from
	$T^{1,0}\mathbb{CP}^1\cong\mathcal O(2)$ and \Cref{lem:S6-normal} that
	\begin{align*}
		H^0\!\left(\mathbb{CP}^1,\operatorname{Hom}(T^{1,0}\mathbb{CP}^1\otimes T^{1,0}\mathbb{CP}^1,u^*T^{1,0}S^6/{\rm d}u(T^{1,0}\mathbb{CP}^1))\right)\cong H^0\!\left(\mathbb{CP}^1,\mathcal O(-5)\oplus\mathcal O(-5)\right)=0.
	\end{align*}
	Hence $\Phi_{\mathrm{II}}=0$, and therefore $u$ is totally geodesic. 
\end{proof}

\Cref{prop:S6-totally-geodesic} shows that the hypothesis \eqref{eq:S6-splitting1} holds precisely for totally geodesic pseudoholomorphic spheres.
\begin{corollary}\label{cor:S6-TG-energy}
	Let $u:\mathbb{CP}^1\to(S^6,J_0)$ be a totally geodesic
	pseudoholomorphic sphere. Let $h$ be a Hermitian--Einstein metric on $N_u$. Then $\int_{\mathbb{CP}^1}|A_u^N|_{g,h}^2\,dV_g\geq2\pi$, and no holomorphic line subbundle $L\cong\mathcal O(-1)\subset N_u$ is invariant under $A_u^N$.
\end{corollary}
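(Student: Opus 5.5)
The plan is to deduce \Cref{cor:S6-TG-energy} directly from \Cref{thm:S6-transverse-symmetry}, applied with $J=J_0$. Two hypotheses of that theorem must be checked: the splitting \eqref{eq:S6-splitting1} and the transverse-orbit condition \eqref{eq:S6-transverse-orbit}. Once both hold, parts (i) and (ii) of \Cref{thm:S6-transverse-symmetry} give exactly the two conclusions. Here $\int|A_u^N|^2\ge 2\pi$, and no $A_u^N$-invariant $L\cong\mathcal O(-1)$ exists.

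The splitting is immediate from \Cref{prop:S6-totally-geodesic}. A totally geodesic pseudoholomorphic sphere is in particular immersed, being a great $2$-sphere parametrized up to biholomorphism, so (i)$\Rightarrow$(ii) yields $N_u\cong\mathcal O(-1)\oplus\mathcal O(-1)$. Should immersedness need a separate justification, I would note that $u$ factors as a biholomorphism onto the associative equator. For multiple covers I would simply read the statement as including ``immersed,'' in line with the surrounding discussion.

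For the orbit condition, I would use that $G_2\subset\operatorname{Aut}(S^6,J_0)$, since $G_2$ preserves the octonionic cross product and hence $J_0$. I would also use that $G_2$ acts transitively on $S^6$. Then the orbit $\operatorname{Aut}(S^6,J_0)\cdot u(z_0)$ is all of $S^6$, so its tangent space at $u(z_0)$ is the full $6$-dimensional $T_{u(z_0)}S^6$. This cannot lie inside the $2$-dimensional image ${\rm d}u_{z_0}(T_{z_0}\mathbb{CP}^1)$, so \eqref{eq:S6-transverse-orbit} holds at any $z_0$.

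There is no real obstacle. The only points needing care are the inclusion $G_2\subset\operatorname{Aut}(S^6,J_0)$ and the transitivity of $G_2$ on $S^6$. The latter is the standard fact $S^6=G_2/SU(3)$, which I would cite.
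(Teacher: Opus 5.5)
Your proposal is correct and follows the paper's own argument: the splitting comes from the implication (i)$\Rightarrow$(ii) of \Cref{prop:S6-totally-geodesic}, the transverse-orbit condition comes from the transitivity of $G_2\subset\operatorname{Aut}(S^6,J_0)$ on $S^6$, and both conclusions then follow from \Cref{thm:S6-transverse-symmetry}. You read the statement as implicitly assuming that $u$ is immersed, which matches the paper, since its introduction states the corollary for immersed totally geodesic spheres.
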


\begin{proof}
	The group $G_2$ acts transitively on $(S^6,J_0)$, so the transverse-orbit hypothesis \eqref{eq:S6-transverse-orbit} is satisfied. The conclusions then follow from \Cref{thm:S6-transverse-symmetry}.
\end{proof}

\subsection*{Acknowledgements}
The authors would like to thank Prof. Ke Zhu for helpful discussions on the automatic transversality of pseudoholomorphic curves.

\end{document}